\documentclass[11pt, xcolor = dvipsnames,reqno
]{amsart}
\usepackage[dvipsnames]{xcolor}
\usepackage{tikz} 
\usepackage{pgfplots}
\usepackage{amsmath, amsthm, amssymb,mathrsfs,amsfonts}
\usepackage{mathtools,bbm}
\usepackage{relsize}
\usepackage{enumitem}
\usepackage[utf8]{inputenc}
\usepackage{pgffor}
\usepackage{color}
\usepackage{epstopdf}
\usepackage{graphicx}

\usepackage{varwidth}
\usepackage{verbatim}

\usepackage[foot]{amsaddr}

\usepackage{mathtools}      
\mathtoolsset{showonlyrefs} 

\pgfplotsset{compat=1.17}

\usepackage{fullpage}   

\usepackage[
colorlinks=true, 
linkcolor=blue, 
citecolor=magenta]{hyperref} 

\newtheorem{thm}{Theorem}[section]
\newtheorem{coro}[thm]{Corollary}
\newtheorem{prop}[thm]{Proposition}
\newtheorem{lem}[thm]{Lemma}

\newtheorem*{acknowledgements*}{Acknowledgements}

\theoremstyle{definition}

\theoremstyle{remark}
\newtheorem{rmk}[thm]{Remark}

\definecolor{ao}{rgb}{0, 0.5, 0}

\newcommand{\dist} {\text{dist\! }}

\usepackage[sortcites,
backend=biber,
date=year,
style=alphabetic,
giveninits=true,
doi=false,
isbn=false,
url=true,
eprint=true,
maxnames=50,
maxalphanames=4]{biblatex} 

\DeclareNameAlias{author}{family-given}
\AtEveryBibitem{
  \clearlist{language}
  \clearfield{month}
  \clearfield{url}
  \clearfield{urlyear}
  \clearfield{urlmonth} 
  \ifentrytype{online}{
    \clearfield{year}}
  {
    \clearfield{eprint}}
}
\renewbibmacro*{volume+number+eid}{%
  \printfield{volume}%
  \setunit*{\addnbspace}
  \printfield{number}%
  \setunit{\addcomma\space}%
  \printfield{eid}
  }
\DeclareFieldFormat[article]{volume}{\textbf{#1}}
\DeclareFieldFormat[article]{number}{\mkbibparens{#1}}
\DeclareFieldFormat*{title}{\textit{#1}}
\DeclareFieldFormat{journaltitle}{#1\isdot}
\renewbibmacro{in:}{}
\begin{document}

\title[Almost sure pointwise convergence for quintic NLS]{
Almost sure pointwise convergence for the 2d periodic quintic NLS
}

\author{Daniel Eceizabarrena}
\address{BCAM - Basque Center for Applied Mathematics, Bilbao, Spain.
		  e-mail: {\tt deceizabarrena@bcamath.org}
		}
		
\author{Pablo Merino}
\address{BCAM - Basque Center for Applied Mathematics, Bilbao, Spain.
		  e-mail: {\tt pmerino@bcamath.org}
		}

\thanks{}
\keywords{Schrödinger equation, Carleson's convergence problem, maximal estimates, nonlinear smoothing, random data, random tensor estimates}
\date{\today}

\begin{abstract}
We prove probabilistic pointwise convergence to the initial datum for the 2D periodic quintic NLS for data in $H^s(\mathbb T^2)$ with $s > 0$. 
This is an improvement with respect to the deterministic setting, in which convergence is known to fail if $s < 1/3$. 
The proof is based on a nonlinear maximal characterization for convergence, 
Bourgain's linear-nonlinear decomposition 
and the corresponding nonlinear smoothing for which we employ random tensor estimates. 
\end{abstract}

\maketitle


\section{Introduction}
\noindent
Given the periodic NLS
\begin{equation}\label{eq:cauchy-nls}
		\left\{
		\begin{array}{rcl}
			i\partial_t u + \Delta u & = & |u|^{p-1} u, 
			\\
			u(0,x) & = & f(x),
		\end{array}
        \qquad \qquad 
        f \in H^s(\mathbb T^d),
		\right.
	\end{equation}
    Carleson's convergence problem consists in finding the minimal Sobolev regularity $s$ such that
    \begin{align}\label{eq:desired-limit}
        \lim_{t \to 0} u(t,x) = f(x)
        \quad \text { a.e. }, 
        \qquad 
        \text{ for all } f \in H^s(\mathbb T^d).
    \end{align}
    The question stems from the original problem for the Euclidean linear Schr\"odinger equation,  
    for which we know that the infimum $s$ such that
    \begin{equation}\label{eq:Carleson_Linear}
        \lim_{t \to 0} e^{it\Delta}f(x) = f(x)
        \quad \text { for a.e. } x \in \mathbb R^d, 
        \qquad \text{ holds for all } f \in H^s(\mathbb R^d)
    \end{equation}
    is $\frac{d}{2(d+1)}$, 
    positive results being proved in \cite{Carleson1980,DGL2017,DuZhang2019} and negative ones in \cite{DahlbergKenig1982,Bourgain2016}. 
    The endpoint is open except in $d=1$, in which case convergence holds \cite{Carleson1980}.
    
    In parallel, 
    the periodic problem is open in all dimensions. 
    While convergence like in \eqref{eq:Carleson_Linear} holds if $s > d/(d+2)$  \cite{MoyuaVega2008,WangZhang2019,CLS2020} and counterexamples are known if $s < d/(2(d+1))$ \cite{CLS2020,EceizabarrenaLuca2022}, 
    we do not know what happens if $s \in [\frac{d}{2(d+1)} , \frac{d}{d+2}]$. 
    Partial progress has been recently achieved in \cite{Barron2022,MYZ2023,Demeter2025}. 

    In \cite{CLS2020}, it was shown that the sufficient condition for the linear case is also sufficient for the nonlinear Schr\"odinger equation in \eqref{eq:cauchy-nls} as long as it is locally well-posed. 
    For example, in the periodic case, this means that \eqref{eq:desired-limit} holds if 
    \begin{equation}\label{eq:Carleson_NLS_Sufficient}
        s > \max \Big( \frac{d}{d+2}, \frac{d}{2} - \frac{2}{p-1} \Big),  
    \end{equation}
    and any improvement of the sufficient condition $d/(d+2)$ for the linear problem automatically translates to \eqref{eq:Carleson_NLS_Sufficient}.
    However, no improvement can be expected below the well-posedness threshold given by the scaling critical exponent $d/2 - 2/(p-1)$. 
    
    A probabilistic approach to the problem was also introduced in \cite{CLS2020}, where it was shown that that regularity can be decreased to any $s > 0$. 
    More precisely, for data of the type
    \begin{align}\label{eq:random-data}
        f^{\omega}(x) = \sum_{n \in \mathbb{Z}^d} \frac{g_n^{\omega}}{\langle n \rangle^{\frac{d}{2} + \alpha}} e^{inx}, 
        \qquad \qquad 
        f^\omega \in H^{\alpha -}(\mathbb T^d) := \bigcap_{s < \alpha} H^s \setminus H^\alpha \quad \omega\text{-almost surely},
    \end{align}
    with $(g_n^{\omega})_{n \in \mathbb{Z}^2}$ i.i.d. complex valued standard Gaussian random variables,
    they proved that 
	\begin{align}\label{eq:Linear_Convergence_Probabilistic}
		\text{ for every } \alpha > 0, 
        \qquad \lim_{t \to 0} \lVert  e^{i t \Delta}f^{\omega} - f^{\omega} \rVert_{L^\infty(\mathbb T^d)} 
        =0
        \qquad \omega\text{-almost surely, }
	\end{align}
    in such a way that uniform convergence holds almost surely at regularity $H^{\alpha-}$ for every $\alpha > 0$. 
    Regarding NLS in \eqref{eq:cauchy-nls}, they studied the 1D case and the 2D Wick-ordered cubic NLS, showing that the convergence property \eqref{eq:desired-limit} holds almost surely for data \eqref{eq:random-data} with $\alpha > 0$.  
    Analogous results were obtained for the cubic NLS on $\mathbb S^2$ \cite{MSSZZ2026}
    and for the cubic Klein-Gordon on $\mathbb T^3$ \cite{LucaMerino2026}.

    In this article, following the questions raised in \cite{CLS2020}, we extend these results to the 2D periodic quintic NLS by proving the convergence property \eqref{eq:desired-limit} for data
    \begin{align}\label{eq:random-data_2D}
        f^{\omega}(x) = \sum_{n \in \mathbb{Z}^2} \frac{g_n^{\omega}}{\langle n \rangle^{1 + \alpha}} e^{inx}, 
        \qquad 
        f^\omega 
        \in H^{\alpha -}(\mathbb T^2) 
        \quad \text{ almost surely,}
    \end{align}
    for $\alpha > 0$ fixed but arbitrarily small.
    Hence, choosing small enough $\alpha > 0$, we have data in $H^\varepsilon$ for arbitrality small $\varepsilon > 0$, 
    thus lowering the deterministic threshold $s>1/2$ in \eqref{eq:Carleson_NLS_Sufficient} to $s > 0$. 
    
    Our main result is the following.
    \begin{thm}\label{thm:pconv}
     Let $\alpha > 0$ and $f^{\omega}$ in \eqref{eq:random-data_2D}. 
     Let $u^{\omega}$ be the solution of the periodic 2D quintic NLS 
     \begin{equation}\label{eq:cauchy-nls_2D}
		\left\{
		\begin{array}{rcl}
			i\partial_t u + \Delta u & = & |u|^4 u, 
			\\
			u(0) & = & f^\omega,
		\end{array}
		\right.
        \qquad \text{ on } \mathbb T^2. 
	\end{equation}
    Then, $\omega$-almost surely we have
    \begin{align}\label{eq:cauchy-nls_2D_Convergence}
        \lim_{t \rightarrow 0} u^{\omega}(t,x) = f^{\omega}(x) \quad \text{ for almost every } x \in \mathbb{T}^2.
    \end{align} 
    \end{thm}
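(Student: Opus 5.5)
The plan is to use Bourgain's linear--nonlinear decomposition. We write $u^\omega = e^{it\Delta} f^\omega + v^\omega$ with $v^\omega(0)=0$. By \eqref{eq:Linear_Convergence_Probabilistic}, almost surely $e^{it\Delta}f^\omega \to f^\omega$ uniformly on $\mathbb T^2$. It therefore suffices to show that, almost surely, $v^\omega(t,x)\to 0$ as $t\to 0$ for a.e.\ $x$.

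The key point is nonlinear smoothing. We aim to show that, almost surely, $v^\omega$ lies in a space adapted to $X^{s,b}$ at regularity $s_1 > 1/2$ on a short random time interval $[0,T_\omega]$. More precisely, we want $v^\omega \in X^{s_1,b}_{T_\omega}$ with $b>1/2$. If this holds, a nonlinear maximal characterization finishes the argument. Since $v$ solves the Duhamel equation with zero data, we can use the transference principle for $X^{s,b}$ with $b>1/2$. This reduces the maximal estimate $\|\sup_{0<t<T}|v(t,\cdot)|\|_{L^2(\mathbb T^2)}$, and the a.e.\ convergence of $v(t)\to v(0)=0$, to the deterministic periodic linear result. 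That result needs $s>d/(d+2)=1/2$ \cite{MoyuaVega2008,CLS2020}. In fact $v\in X^{s_1,b}$ with $s_1>1/2$ already gives $v\in C_tH^{s_1}$, and convergence follows from the linear maximal estimate applied via transference. This is the same scheme that \cite{CLS2020} used for the Wick-ordered cubic case.

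So the core is to solve the equation for $v$,
\[
 i\partial_t v+\Delta v = |z+v|^4(z+v),\qquad z=e^{it\Delta}f^\omega,
\]
by a contraction argument in $X^{s_1,b}_T$ with $s_1\in(1/2,1/2+\delta)$. We expand the quintic nonlinearity into terms of type $\mathcal N(w_1,\dots,w_5)$, where each $w_j$ is $z$, $\bar z$, $v$ or $\bar v$. We then need multilinear estimates of the form
\[
\|\mathcal N(w_1,\dots,w_5)\|_{X^{s_1,b-1}}\lesssim T^{\theta}\prod_j \|w_j\|,
\]
with $\|z\|$ measured probabilistically. The terms with several $v$'s and at most one or two $z$'s are handled by periodic Strichartz estimates (Bourgain's $L^4$, and $L^6$ with $\epsilon$-loss) combined with probabilistic Strichartz bounds for $z$. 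The smoothing needed is mild, namely from $\alpha$ up to $1/2+$.

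The main obstacle is the high-multiplicity random terms: the purely random term $|z|^4z$, and terms with three or four copies of $z$. There the gain from regularity $\alpha\approx 0$ to $1/2+$ must come from randomness. For the pure term we would compute the Wiener chaos directly. After removing the resonant pairings (which for $|u|^4u$ are not renormalized, so we must check they are harmless or use phase/gauge arguments), counting lattice points for $n=n_1-n_2+n_3-n_4+n_5$ with fixed $\Omega$ should give about $1/2$ derivative of smoothing in $2$D. For the mixed terms, where $v$ sits at high frequency in some slots, we would use the random tensor estimates of Deng--Nahmod--Yue. We view the map $w\mapsto \mathcal N(z,\dots,w,\dots)$ as a random operator and bound its operator norm by deterministic tensor norms, via lattice counting, up to $N^{\epsilon}$ losses. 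We expect a careful case analysis of frequency configurations (which input is maximal, and whether the output is at high or low frequency) to be the hardest part. If pure $X^{s,b}$ fails in a high$\times$low interaction, we would refine the ansatz with an additional random-averaging term. Once the contraction closes on $[0,T_\omega]$ almost surely, the maximal argument above gives \eqref{eq:cauchy-nls_2D_Convergence}.
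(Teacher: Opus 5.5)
The core step of your plan fails. For the equation as written, with $z=e^{it\Delta}f^\omega$, the remainder $v=u-z$ does not lie in $X^{s_1,b}_T$ for any $s_1\geq\alpha$. So the contraction for $v$ cannot close, however well the random terms are estimated.

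Write $|u|^4u=\beta_u u+\mathcal Q(u)$ with $\beta_u(t)=3\int_{\mathbb T^2}|u(t,x)|^4\,dx$. The term $\beta_u u$ comes from configurations where one of $n_1,n_3,n_5$ equals the output frequency $n$ and the other four frequencies sum to zero. These include the pairings you flagged as ``harmless or handled by a phase'' (e.g.\ $n_1=n$, $n_2=n_3$, $n_4=n_5$ gives $M^2u_n$). They are not harmless. Since $e^{i(t-t')\Delta}z(t')=z(t)$, the piece $\beta_u z$ contributes exactly $-i\,z(t)\int_0^t\beta_u(t')\,dt'$ to the Duhamel integral. This has precisely the regularity of $f^\omega$, i.e.\ $H^{\alpha-}$, with no smoothing at all. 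Consistently, the actual solution is $u=e^{-i\int_0^t\beta_u}(z+D)$ with $D\in X^{s_1,b}$. Hence $u-z=(e^{-i\int_0^t\beta_u}-1)z+e^{-i\int_0^t\beta_u}D\notin H^{s_1}$ for small $t>0$.

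The missing idea is to renormalize before decomposing. Prove the smoothing, with the ansatz $u=z+D$, for $i\partial_tu+\Delta u=\mathcal Q(u)$, where $\mathcal Q(u)=|u|^4u-3u\int_{\mathbb T^2}|u|^4\,dx$ has no terms linear in $u$. Then return to the original equation through the gauge $\mathcal G u=e^{-i\int_0^t\beta_u}u$; note $|\mathcal G u|=|u|$, so $\beta$ is unchanged. This adds two steps your outline lacks:
\begin{itemize}
\item checking that $\mathcal G$ carries the solution, and its uniqueness class, of one problem to the other;
\item proving $e^{-i\int_0^t\beta_u}\to 1$ as $t\to 0$, which needs $u\in L^4_{t,x}$ almost surely (probabilistic Strichartz for $z$, the $L^4$ Strichartz estimate for $D$).
\end{itemize}
Once this is in place, your concluding step is valid: $D\in X^{s_1,b}_T$ with $s_1>1/2$, transference of the maximal estimate, and approximation by $P_{\leq K}D$. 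It is in fact more direct than the paper's comparison with the truncated flows $\Phi^N_t$, which needs the extra stability bound \eqref{eq:Qsmoothing_LinearDifference}.

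Your case split also needs correcting. What decides the tool is which input carries the top frequency, not how many copies of $z$ appear. If it is $D$, deterministic Strichartz-type multilinear bounds suffice. If it is $z$, even $RDDDD$ or $DRDDD$ gain nothing over $N_{(1)}^{-\alpha}$ from linear Strichartz and H\"older. The paper handles these with random tensor estimates, and they are its worst cases. The smoothing needed is therefore not mild: it is essentially $1/2$, the maximal gain expected on $\mathbb T^2$.
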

    \begin{rmk}
        Theorem~\ref{thm:pconv} holds regardless of the focusing or defocusing nature of the equation. 
    \end{rmk}

    \begin{rmk}
        Let $\alpha > 0$ and $\mu_{\alpha}$ be the centered Gaussian measure with Cameron-Martin space $H^{\alpha}(\mathbb T^2)$ and covariance operator $(\text{Id} - \Delta)^{-\alpha - 1}$, i.e. the one induced by the map $\omega \in \Omega \mapsto f^{\omega}$, with $f^{\omega}$ as in \eqref{eq:random-data_2D}. Then, Theorem \ref{thm:pconv} is equivalent to the following property: given $s \in (0 , \alpha)$, for $\mu_{\alpha}$-almost every $f \in H^s(\mathbb T^2)$, the solution $u$ of \eqref{eq:cauchy-nls_2D} with datum $f$ satisfies
        \begin{align*}
            \lim_{t \rightarrow 0} u(t,x) = f(x) \qquad \text{ for almost every } x \in \mathbb{T}^2.
        \end{align*}
    \end{rmk}

    \subsection*{General strategy}
    The proof of Theorem \ref{thm:pconv} is based on a combination of the maximal estimate characterization of pointwise convergence and a linear-nonlinear decomposition \textit{\`a la Bourgain} \cite{Bourgain1996}.  
    It will be critical to optimize the nonlinear smoothing in this decomposition.

    As usual, the convergence result follows from a maximal estimate characterization. 
    We recall that in the case of the linear equation, convergence to the datum in \eqref{eq:Carleson_Linear} follows from
    \begin{align}\label{eq:max-est}
        \Big\| \sup_{0 < t < \delta} \big| e^{it\Delta} f \big| \Big\|_{L^2(\mathbb{T}^2)} \lesssim \|f\|_{H^s(\mathbb{T}^2)},
        \qquad \forall f \in H^s(\mathbb T^2), 
    \end{align}
    for some $\delta > 0$. 
    In a similar way, as shown in \cite{CLS2020}, 
    the nonlinear counterpart \eqref{eq:desired-limit} follows from the stability of the frequency-truncated flow in the maximal norm, that is, from 
    \begin{align}\label{eq:Stability_Intro}
        \lim_{N \rightarrow \infty} 
			\Big\| \sup_{0 < t < \delta} 
			| \Phi_t f   - \Phi^N_t f  | \Big\|_{L^2(\mathbb{T}^2)} = 0,
    \end{align}
    which we revisit in Lemma \ref{lemma:AdaptMaxEst_NL}.
    To prove this estimate, we will combine probabilistic arguments for the linear flow with the embedding $X^{s,b}_\delta \subset L^2_x(L^\infty_t(0,\delta))$ that follows from the fact that the maximal estimate \eqref{eq:max-est} holds for $s > d/(d+2) = 1/2$.
    The proof then boils down to obtaining enough nonlinear smoothing for the solution. 
    In other words, for $f^\omega \in H^{\alpha -}$ as in \eqref{eq:random-data_2D}, we look for a linear-nonlinear decomposition in such a way that 
    \begin{align}\label{eq:nlsmoothing-gralform1}
        u(t,x) - e^{it\Delta} f^{\omega} \in C([0,\delta_{\omega}] , H^{\alpha + \sigma}(\mathbb{T}^2)),
        \qquad \omega\text{-almost surely,}
    \end{align}
    for some $\delta_\omega > 0$. It is critical that the smoothing $\sigma > 0$ is such that $\alpha + \sigma > 1/2$ for two reasons. 
    First, it is necessary for the embedding above to hold. 
    Second, the solution \eqref{eq:nlsmoothing-gralform1} is found with a fixed-point argument that can only be performed in a deterministically subcritical regularity $\alpha + \sigma > s_c = d/2 - 2/(p-1) =  1/2$. 
    Since $\alpha > 0$ is arbitrary, we will prove a smoothing of $\sigma < 1/2$ in Theorem~\ref{thm:to-pconv} 
    for which, as in \cite{OhWang2025}, we use the random tensor estimates developed in \cite{DNY2022} (see also \cite{Kaneshiro2025}). 
    The fact that this smoothing implies convergence is proved in Section~\ref{sec:NonlinearSmoothing_and_ProofOfTheorem}.

    \begin{rmk}
        It is fundamental in this argument that the scaling critical regularity $s_c = 1/2$ for the 2D quintic NLS matches the best regularity currently available for the maximal estimate $d/(d+2) = 1/2$. 
        In general, in $\mathbb T^d$ and any nonlinearity $|u|^{p-1} u$ with odd $p$, 
        this argument will work anytime we can prove enough smoothing $\sigma > 0$ such that 
        \begin{equation}
            \alpha + \sigma > \max \Big(  \frac{d}{d+2}, \frac{d}{2} - \frac{2}{p-1} \Big). 
        \end{equation}
        Following \cite{DNY2024}, in $\mathbb T^2$ one cannot expect more nonlinear smoothing than  $\sigma < 1/2$, and therefore, the quintic NLS is the only case, apart from the cubic, for which this method allows to reach the whole range $\alpha > 0$. 
        In particular, if $p > 5$ this argument will at most provide a result for 
        \begin{equation}
            \alpha 
            > \Big(1 - \frac{2}{p-1} \Big) - \frac{1}{2}
            > 0, 
        \end{equation}
        and it is thus not strong enough to reach the whole range $\alpha > 0$, which we conjecture to hold for every $p$. 
        The same problem arises in the higher dimensional case on $\mathbb T^d$ with $d \geq 3$.         
    \end{rmk}

    \subsection*{Structure}
    In Section~\ref{sec:Notation} we introduce notation. 
    In Section \ref{sec:setup}, we set up the proof of Theorem~\ref{thm:pconv} for a gauged quintic NLS with a maximal estimate approach which we eventually reduce to a non-linear smoothing estimate in Theorem~\ref{thm:to-pconv}. 
    We also justify the equivalence of the result for the original and gauged equations. 
    In Section~\ref{sec:smoothing}, we use deterministic multilinear estimates and elementary bounds in Fourier space to further reduce the nonlinear smoothing to explicit bounds for the Fourier coefficients of the nonlinearity in Proposition~\ref{prop:Final_Estimate}. 
    We also include some basic notions of random tensor estimates. Finally, in Section~\ref{sec:proof-smoothing} we prove Proposition~\ref{prop:Final_Estimate} by splitting the nonlinearity in terms of pairings.
    We include two appendices with deterministic and probabilistic properties that we use throughout the article.

\section{Notation}
\label{sec:Notation}

\noindent
\textbf{General notation.}
    Given $A, B \in \mathbb{R}$, 
    by $A \lesssim B$ we mean that there is $c > 0$ such that  $A \leq c B$. 
    We write $A \simeq B$ when both $A \lesssim B$ and $A \gtrsim B$. 
    
    In general, we use $\varepsilon$ to denote probabilities close to zero, while arbitrarily small exponents will be denoted by $\varepsilon_1$. 

    For $u \in \mathbb C$, we write $u^+ = u$ and $u^- = \overline u$.
    To account for conjugates in the nonlinearity $|u|^{p-1} u$, we denote $\iota_j \in \{ +, -\}$ and write $u_j^{\iota_j}$ to denote either $u_j$ or $\overline{u_j}$. 
    In general, we will set 
    \begin{align}\label{eq:def-iotaj}
    \iota_j = \left\{ \begin{array}{cc}
        +,  & \text{ if } j \text{ is odd},  \\
        -, & \text{ if } j \text{ is even},   
    \end{array} \right. 
    \qquad \text{ and hence } \qquad 
    u_j^{\iota_j}
    = \left\{
    \begin{array}{ll}
    u_j,  &  \text{ if } j \text{ is odd},  \\
    \overline{u_j}, & \text{ if } j \text{ is even}.
    \end{array}
    \right.
\end{align}

\noindent
\textbf{Indices and dyadic frequencies.}
Given a family of vectors $\{n_1,\dots,n_p\} \subset (\mathbb{Z}^d)^p$ and $C \subset \{1,\dots,p\}$, we denote $n_C = \{n_j\}_{j \in C}$. 
For frequency vectors $n \in \mathbb Z^d$, we denote their dyadic size by $|n| \simeq N \in 2^{\mathbb N}$.

    If $|n_j| \simeq N_j$, we denote the decreasing rearrangement of $\{N_j\}$ by $\{N_{(j)}\}$, in such a way that $N_{(1)} \geq N_{(2)} \geq \ldots \geq N_{(p)}$, and $N_{(j)}$ is the $j$-th largest component. 
    In the same spirit, we denote $\operatorname{max}_C = \operatorname{max} \{ \,  N_j \, : \, j \in C \, \}$ and $\operatorname{min}_C = \operatorname{min} \{ \,  N_j \, : \, j \in C \, \}$. 
    If $|C| = 3$, we denote by $\operatorname{mid}_C$ the middle element.
    If $|C| = 4$, we denote by $\operatorname{mid}_{1,C}$ and $\operatorname{mid}_{2,C}$ the higher and lower middle elements, respectively. For instance, if $p=5$ and $C = \{1,2,4,5\}$, then 
    \begin{align}
        N_1 \geq N_4 \geq N_5 \geq N_2
        \qquad \Longrightarrow \qquad 
        \begin{array}{ll}
            \operatorname{max}_{\{1,2,4,5\}} = N_1,  & \operatorname{min}_{\{1,2,4,5\}} = N_2,  \\
            \operatorname{mid}_{1, \{1,2,4,5\}} = N_4,  & \operatorname{mid}_{2, \{1,2,4,5\}} = N_5. 
        \end{array}
    \end{align}

    Let $A,B$ be two finite sets of indices. Given     $(a_{n_A}) \in \ell_{n_A}^2$, we denote $\|a_{n_A}\|_{n_A} = \|a_{n_A}\|_{\ell^2_{n_A}}$, 
    and for a bounded operator $T : \ell_{n_A}^2 \rightarrow \ell_{n_B}^2$, we denote its operator norm by $\|T\|_{n_A \rightarrow n_B}$. 
        
 We denote the Fourier frequency projections by
\begin{align}
    \widehat{P_{\leq N} f} = \mathbbm 1_{|n| \leq N} \widehat f, 
    \qquad 
    P_N = P_{\leq N} - 	P_{\leq N/2}, 
    \qquad 
    P_{>N} = \operatorname{Id} - P_{\leq N}.
\end{align}

\noindent
\textbf{Restriction spaces.}
For $\delta > 0$ and $b > 1/2$, Bourgain's spaces $X^{s,b}_{\delta}$ are defined by the norm
	\begin{align}\label{eq:RestrSpaceDef}
        \|F\|_{X^{s,b}_{\delta}} = \inf_{G = F \text{ on } t \in [0,\delta]} \|G\|_{X^{s,b}},
    \qquad \qquad 
		\|G\|^2_{X^{s,b}} = 
        \sum_{n \in \mathbb{Z}^2} \int_{\mathbb{R}}   \langle n \rangle^{2s} \, \langle \tau + |n|^2 \rangle^{2b} \,  |\widetilde{G}(\tau,n)|^2 d\tau,
	\end{align}
	where $\widetilde{G}$ is the space-time Fourier transform of $G$. 
    We always work with 
    \begin{align}
        b = \frac12 + \beta, \qquad \text{ with small enough } \beta > 0. 
    \end{align}
    To account for the infimum in \eqref{eq:RestrSpaceDef}, 
    we work with a non-negative $\eta \in C_c^\infty$ such that $\eta = 1$ on $[0,1]$.
    We include a few properties of $X^{s,b}$ in Appendix~\ref{app:Deterministic}.

\section{Setup}\label{sec:setup}

\subsection{Renormalization of the nonlinearity}
\label{sec:Renomalized}
To prove Theorem~\ref{thm:pconv}, 
we work with the renormalized quintic NLS
\begin{equation}\label{eq:cauchy-nls-gauge}
		\left\{
		\begin{array}{rcl}
			i\partial_t u + \Delta u & = & \mathcal{Q}(u), 
			\\
			u(0,x) & = & f(x),
		\end{array}
		\right.
        \qquad \qquad 
        \mathcal{Q}(u) = |u|^4 u - 3 u \int_{\mathbb T^2} |u|^4 \, dx,
	\end{equation}
in such a way that the renormalized nonlinearity $\mathcal{Q}(u)$ does not have linear terms in $u$, which typically prevent nonlinear smoothing.     
We will show in Section~\ref{sec:renorm} that working with \eqref{eq:cauchy-nls-gauge} suffices. 

We denote by $\Phi^N_t$ the frequency-truncated flow, 
in such a way that $\Phi^N_t f$ solves
\begin{equation}\label{eq:nlsTruncated}
	\left\{
	\begin{array}{l}		 
		\partial_t u  + i \Delta u = P_{\leq N} \mathcal{Q} ( u ),	\\
		u(0) = P_{\leq N} f,
		\end{array}
		\right.
	\end{equation}
and	$\Phi_t f  := \Phi^{\infty}_t f$ denotes the non-truncated flow.

\subsection{Reduction to a maximal form}\label{sec:maximalest-approach}
Like in the linear problem, 
pointwise convergence for NLS can be reduced to estimates for maximal norms.
We reproduce the following lemma from \cite{CLS2020} and provide a sketch of the proof for completeness.

\begin{lem}\label{lemma:AdaptMaxEst_NL}
	Let $s \geq 0$ and $f \in  H^{s}(\mathbb{T}^2)$.
    Let $\Phi_t$ and $\Phi_t^N$ be the flows defined in \eqref{eq:nlsTruncated}.
    If for some $\delta  > 0$ we have
	\begin{align}\label{eq:equivconvergence}
		\lim_{N \rightarrow \infty} 
		\Big\| \sup_{0 < t < \delta} 
		| \Phi_t f   - \Phi^N_t f  | \Big\|_{L^2(\mathbb{T}^2)} = 0,
	\end{align}
    then $\lim_{t \to 0} \Phi_t f(x) = f(x)$ for almost every $x \in \mathbb{T}^2$.
\end{lem}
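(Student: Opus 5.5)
The plan is the standard density argument. The key point is that for each fixed $N$, the truncated flow $\Phi^N_t f$ is a smooth function of $(t,x)$ that converges pointwise (indeed uniformly) to $P_{\leq N} f$ as $t\to 0$. The hypothesis \eqref{eq:equivconvergence} then lets us control the oscillation of $\Phi_t f$ near $t=0$ in measure.

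\textbf{Step 1: the truncated flow converges.} The truncated problem \eqref{eq:nlsTruncated} is a finite-dimensional ODE on the Fourier modes $|n|\le N$. The projection $P_{\leq N}$ in front of $\mathcal{Q}$ and in the datum keeps the solution frequency-localised. By ODE theory (local Lipschitz nonlinearity), $t\mapsto \widehat{\Phi^N_t f}(n)$ is continuous on a short interval. The finite-dimensional ODE preserves the mass, since $\mathcal Q$ is gauge-compatible: $\operatorname{Im}\langle P_{\le N}\mathcal Q(u),u\rangle=0$ for $u=P_{\le N}u$. So the solution exists on $[0,\delta]$, and we assume the lemma implicitly presupposes this, as in \cite{CLS2020}. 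Since only finitely many modes are involved, $\sup_x|\Phi^N_t f(x)-P_{\leq N}f(x)| \le \sum_{|n|\le N}|\widehat{\Phi^N_t f}(n)-\widehat f(n)|\to 0$ as $t\to0$.

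\textbf{Step 2: oscillation bound.} Define $\Omega f(x)=\limsup_{t\to0}|\Phi_t f(x)-f(x)|$. For each $N$ and $0<t<\delta$ we decompose
\begin{align}
|\Phi_t f - f|\le \sup_{0<t<\delta}|\Phi_t f-\Phi_t^N f| + |\Phi^N_t f - P_{\le N} f| + |P_{\le N}f - f|.
\end{align}
Taking $\limsup_{t\to0}$ and using Step 1 kills the middle term, so
\begin{align}
\Omega f(x)\le \sup_{0<t<\delta}|\Phi_t f-\Phi^N_t f|(x)+|P_{>N}f(x)|.
\end{align}
We then estimate the measure of $\{\Omega f>\lambda\}$ by Chebyshev. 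This gives
\begin{align}
|\{\Omega f>\lambda\}|\le \frac{4}{\lambda^2}\Big(\Big\|\sup_{0<t<\delta}|\Phi_t f-\Phi_t^N f|\Big\|_{L^2}^2+\|P_{>N}f\|_{L^2}^2\Big).
\end{align}
Both terms tend to zero as $N\to\infty$: the first by hypothesis \eqref{eq:equivconvergence}, the second since $f\in L^2$. Hence $|\{\Omega f>\lambda\}|=0$ for every $\lambda>0$. Taking $\lambda=1/k$ and a countable union yields $\Omega f=0$ a.e., which is the claim.

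\textbf{Main obstacle.} The only delicate point is interpreting $\Phi_t f(x)$ pointwise, i.e.\ making sure the supremum in \eqref{eq:equivconvergence} is a measurable function. I would handle this by noting that $\Phi_t f-\Phi^N_t f$ is defined as an $L^2$ limit. The finiteness of the $L^2(L^\infty_t)$ norm in the hypothesis means that a representative continuous in $t$ exists a.e., so the supremum over $t$ can be taken over rational $t$, which makes it measurable. The rest is routine.
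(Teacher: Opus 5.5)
Your proof is correct and follows the paper's argument essentially line by line. Both use the same three-term triangle inequality through $\Phi^N_t f$ and $P_{\le N} f$, uniform convergence of the finite-dimensional truncated flow to $P_{\le N} f$, Chebyshev, $N \to \infty$, and a countable union over $\lambda = 1/k$; your mass-conservation remark also justifies existence of $\Phi^N_t f$ on $[0,\delta]$, which the paper leaves implicit.

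One small correction to your measurability aside: a finite $L^2_x L^\infty_t$ norm does not by itself give a representative continuous in $t$. This does not affect the proof, since the hypothesis already presupposes that the maximal function is a meaningful measurable object, and the paper does not address the point either.
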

	\begin{proof}
		By the triangle inequality, decompose 
		\begin{align*}
			|\Phi_t f - f | 
            \leq 
            |\Phi_t f - \Phi^{N}_t f| 
            + |\Phi_t^{N} f - P_{\leq N} f| 
            + |P_{>N} f|.
		\end{align*}
        Regarding the second term, given that $P_{\leq N} f \in H^s$ for every $s$ and it is hence smooth, 
        then $\Phi_t^{N} f \rightarrow P_{\leq N} f$ as $t \rightarrow 0$ uniformly in $x \in \mathbb{T}^2$.

        On the other hand, by Chebyshev's inequality, for any $\lambda > 0$ we have
		\begin{align*}
			\big| \big\{ x : \limsup_{t \rightarrow 0} |\Phi_t f - f| > 2\lambda \big\} \big| 
			& \leq \big| \big\{ \, x  \,  : \,  \sup_{0 < t < \delta } 
			|\Phi_t f - \Phi^{N}_t f| > \lambda \big\}\big|  
			+ \big|\big\{ \,  x \,  : \,  |P_{>N} f| > \lambda \big\}\big|  \\ 
			& \lesssim  \frac{1}{\lambda^2} \left( \Big\| \sup_{0 < t < \delta } 
			|\Phi_t f - \Phi^{N}_t f| \Big\|^2_{L^2(\mathbb{T}^2)} 
			+ \| P_{>N}f \|^2_{L^2(\mathbb{T}^2)} \right).
		\end{align*}
		Taking the limit $N \to \infty$ we get 
        \begin{align}
            |\{ \, x  \in \mathbb{T}^2 \, : \, \limsup_{t \rightarrow 0} |\Phi_t f - f| > \lambda  \}| = 0, 
            \qquad \qquad \forall \lambda > 0. 
        \end{align}
        The result follows by considering the union of such sets over $n \in \mathbb N$ with $\lambda_n = 1/n$. 
	\end{proof}

\subsection{Linear-nonlinear decomposition}

To prove the sufficient condition \eqref{eq:equivconvergence} in Lemma~\ref{lemma:AdaptMaxEst_NL}, 
we find a linear-nonlinear decomposition of the solution
\begin{align}\label{eq:bourgain-trick}
    \Phi^N_t f^\omega = P_{\leq N} e^{it\Delta} f^{\omega} + D_N, 
    \qquad \qquad N \in 2^{\mathbb N} \cup \{ \infty \}, 
\end{align}
which is inspired by the Duhamel integral representation of the equation \eqref{eq:nlsTruncated},
\begin{align}\label{eq:Duhamel_Q}
    u(t,x) = P_{\leq N} e^{it\Delta} f + i \int_0^t e^{i(t - t')\Delta}  P_{\leq N} \mathcal Q (u(t',x)) \, dt'.
\end{align}
Since $f^\omega \in H^{\alpha - }$ a.s.  with $0 < \alpha < 1/2$, 
and it is therefore deterministically supercritical, 
we look for some nonlinear smoothing $\sigma > 0$ so that $D_N$ can be found in a deterministically subcritical Sobolev space $H^{\alpha + \sigma}$ with $\alpha + \sigma > 1/2$. 
Plugging \eqref{eq:bourgain-trick} in \eqref{eq:Duhamel_Q}, $D_N$ satisfies the equation 
\begin{equation}\label{eq:cauchy-nls-v}
		\left\{
		\begin{array}{rcl}
			i \, \partial_t D_N + \Delta D_N 
            & = &  P_{\leq N} \mathcal Q ( P_{\leq N} e^{it\Delta} f^{\omega} + D_N ),  
			\\
			D_N(0,x) & = & 0;
		\end{array}
		\right.
\end{equation}
and therefore, if $\alpha + \sigma > 1/2$, we will find it as a fixed point of the map 
\begin{align}\label{eq:map-fp-prob}
    \Gamma_N^\omega : X^{ \alpha + \sigma , b}_{\delta} \rightarrow X^{ \alpha + \sigma , b}_{\delta}, 
    \qquad \qquad 
    \Gamma_N^{\omega}(D) = i \int_0^t e^{i(t - t')\Delta} P_{\leq N} \mathcal Q ( P_{\leq N} e^{i t' \Delta} f^{\omega} + D) \, dt'.
\end{align}
Since $\alpha > 0$ is arbitrary, 
we need a smoothing $\sigma = 1/2 - \varepsilon$ with arbitrarily small $\varepsilon > 0$.

\begin{rmk}\label{rmk:D_to_0}
    Given that the probabilistic linear convergence in \eqref{eq:Linear_Convergence_Probabilistic} holds for $\alpha > 0$,       
    the linear-nonlinear decomposition in \eqref{eq:bourgain-trick} implies that 
    \begin{align}
        \lim_{t\to 0} e^{it\Delta}f^\omega = f^\omega 
        \qquad \Longrightarrow \qquad 
        \lim_{t\to 0} \Phi_t^N f^\omega = 
         P_{\leq N} f^{\omega}  + \lim_{t\to 0} D_N.
    \end{align}
    Thus, roughly speaking, the nonlinear convergence property amounts to proving $D_N \to 0$. 
\end{rmk}

A way to materialize the idea in Remark~\ref{rmk:D_to_0} is the following. 
By Lemma~\ref{lemma:AdaptMaxEst_NL} and \eqref{eq:bourgain-trick}, we write 
\begin{align}
    \Phi_t f - \Phi^N_t f
    & = \big(\Phi_t f - e^{it\Delta} f^\omega \big) 
    - \big( \Phi^N_t f - P_{\leq N} e^{it\Delta} f^\omega \big) 
    + P_{> N} e^{it\Delta} f^\omega \\
    & = D - D_N + P_{> N} e^{it\Delta} f^\omega, 
\end{align}
and therefore, 
\begin{equation}
    \Big\| \sup_{0 < t < \delta} 
		| \Phi_t f   - \Phi^N_t f  | \Big\|_{L^2_x(\mathbb{T}^2)}
        \leq 
        \Big\| \sup_{0 < t < \delta} 
		| D - D_N  | \Big\|_{L^2_x(\mathbb{T}^2)}
        + \Big\| \sup_{0 < t < \delta} 
		| P_{> N} e^{it\Delta} f^\omega | \Big\|_{L^2_x(\mathbb{T}^2)}.
\end{equation}
We first prove that
\begin{equation}\label{eq:Limit_Linear_Term}
    \lim_{N \to \infty} \Big\| \sup_{0 < t < \delta} 
		| P_{> N} e^{it\Delta} f^\omega | \Big\|_{L^2_x(\mathbb{T}^2)} = 0,
        \qquad \qquad \omega\text{-almost-surely, }
\end{equation}
for which we follow the strategy used in \cite{CLS2020} to prove the linear probabilistic convergence in \eqref{eq:Linear_Convergence_Probabilistic}.
Let $M \geq 1$.
By Bernstein's inequality in space-time and Lemma \ref{lemma:Large_Deviation_Schrodinger_Linear}, for every $\varepsilon > 0$ we get
\begin{align}
    \|P_M e^{i t \Delta} f^{\omega}\|_{L^{\infty}_{t,x}(\mathbb{T}^3)} \lesssim M^{6/q} \,  \|P_M e^{i t \Delta} f^{\omega}\|_{L^q_{t,x}(\mathbb{T}^3)}
    \lesssim 
    \Big(  \log \frac{1}{\varepsilon} \Big)^{1/2} \,  \frac{M^{6/q}}{M^{\alpha}},
\end{align}
with probability $\geq 1-\varepsilon$. As shown in Appendix~\ref{app:probprel}, this estimate can be made uniform in $M \in 2^{\mathbb N}$, in the sense that for every $\varepsilon > 0$, we have  
\begin{align}
    \|P_M e^{i t \Delta} f^{\omega}\|_{L^{\infty}_{t,x}(\mathbb{T}^3)} 
    \lesssim 
    \Big(  \log \frac{1}{\varepsilon} \Big)^{1/2} \,  \frac{(\log M)^{1/2}}{M^{\alpha - 6/q}}, \qquad \forall M \in 2^{\mathbb N}, 
\end{align}
with probability $\geq 1 - \varepsilon$, and hence, choosing $q > 6/\alpha$, for every $\varepsilon_1 > 0$ we get
\begin{align}
    \|P_{> N} e^{i t \Delta} f^{\omega}\|_{L^{\infty}_{t,x}(\mathbb{T}^3)} 
    \lesssim 
    \Big(  \log \frac{1}{\varepsilon} \Big)^{1/2} \,  \sum_{M > N}\frac{(\log M)^{1/2}}{M^{\alpha - 6/q}}
    \lesssim 
    \Big(  \log \frac{1}{\varepsilon} \Big)^{1/2} \,  \frac{1}{N^{\alpha - 6/q - \varepsilon_1}}, 
    \qquad \forall N \in 2^{\mathbb N}, 
\end{align}
with probability $\geq 1 - \varepsilon$. 
This corresponds to the tail bound
\begin{align*}
    \mathbb{P}(\|P_{>N} e^{i t \Delta} f^{\omega}\|_{L^{\infty}_{t,x}(\mathbb{T}^3)}  > \lambda) \leq e^{-c \lambda^2 N^{2\alpha - \frac{12}{q} - 2\varepsilon_1}}, 
    \qquad \forall \lambda > 0, \qquad \forall N \in 2^{\mathbb N}, 
\end{align*}
which by Borel-Cantelli implies
\begin{align*}
        \mathbb{P} \Big(\liminf_{N \rightarrow \infty} \big\{ \omega \in \Omega : \|P_{> N} e^{it\Delta}f^{\omega}\|_{L^{\infty}_{t,x}(\mathbb{T}^3)} \leq \lambda \big\} \Big) = 1.
    \end{align*}
    Thus, for almost every $\omega \in \Omega$ there exists $N^{\omega}_{\lambda}$ such that
    \begin{align*}
        \|P_{> N} e^{it\Delta}f^{\omega}\|_{L^{\infty}_{t,x}(\mathbb{T}^3)} \leq \lambda, 
        \qquad \forall N \geq N^{\omega}_{\lambda}.
    \end{align*}
    Since $\lambda > 0$ is arbitrary, we conclude that  
    \begin{align}\label{eq:prob-tail}
        \lim_{N \to \infty} 
        \Big\| \sup_{0 < t < \delta} 
		| P_{> N} e^{it\Delta} f^\omega | \Big\|_{L^2_x(\mathbb{T}^2)} \lesssim 
        \lim_{N \to \infty} \|P_{> N} e^{it\Delta}f^{\omega}\|_{L^{\infty}_{t,x}(\mathbb{T}^3)} = 0. 
    \end{align}

    On the other hand, given that the linear maximal estimate in \eqref{eq:max-est} is valid for $s > d/(d+2) = 1/2$, using the standard transference principle in Lemma \ref{lemma:TransferLemma} in the space $Y = L^{2}(\mathbb{T}^2; L^{\infty}(0, \delta))$ we get
	\begin{align}\label{eq:maxest-rest}
		\Big\| \sup_{0 \leq t \leq \delta} | D - D_N | \Big\|_{L^2(\mathbb{T}^2)}  
        \lesssim
		\|  D  - D_N \|_{X^{s,b}_{\delta }},
        \qquad \text{ for } b > 1/2 \text{ and } s > 1/2.
	\end{align}
    Thus, to prove Theorem~\ref{thm:pconv} it suffices to find $D \in X^{s, b}_\delta$ with $s = \alpha + \sigma > 1/2$ as a solution from \eqref{eq:map-fp-prob}, 
    and in addition, by \eqref{eq:Limit_Linear_Term} and \eqref{eq:maxest-rest}, to prove 
    \begin{align}\label{eq:red-pconv-rest}
        \lim_{N \rightarrow \infty} 
        \|  D  - D_N \|_{X^{\alpha + \sigma,b}_{\delta }} 
        = 0.
    \end{align}

    \subsection{Nonlinear smoothing and proof of Theorem~\ref{thm:pconv}}
    \label{sec:NonlinearSmoothing_and_ProofOfTheorem}

    We now state the almost sure nonlinear smoothing of $ \sigma < 1/2$
    and the corresponding local well-posedness result. 
    In what comes, $X^{s,b}$ spaces are taken such that 
    \begin{align}
        b = \frac12 + \beta, 
        \qquad \text{ and } \qquad 
        b' = \frac12 + \beta', 
        \qquad \qquad 
    \end{align}
    with $\beta, \beta' > 0$ small enough.
    The datum $f^{\omega} \in H^{\alpha-}$ is always as in \eqref{eq:random-data_2D}.

\begin{thm}\label{thm:to-pconv}
    Let $\alpha > 0$. 
    Let $0 \leq \sigma < 1/2$ 
    such that $\alpha + \sigma > 1/2$. 
    Let $\delta > 0$. 
    Then, for every $\varepsilon > 0$, there exists $A_\varepsilon \subset \Omega$ with $\mathbb{P}(A_{\varepsilon}) \geq 1- \varepsilon$ such that for every $D,D' \in X^{\alpha + \sigma,b}_\delta$ with 
    $
        \|D\|_{X^{\alpha + \sigma,b}_\delta}, \|D'\|_{X^{\alpha + \sigma,b}_\delta} \leq 1,
    $
    we have
\begin{align}
    \big\| \mathcal{Q}(P_{\leq M} e^{it\Delta}f^{\omega} + D) \big\|_{X^{\alpha + \sigma, b'-1}_\delta}
    & \lesssim (\log (1/\varepsilon))^{5/2}, \label{eq:QsmoothingD} \\
    \big\| \mathcal{Q}(P_{\leq M}e^{it\Delta} f^{\omega} + D) - \mathcal{Q}(P_{\leq M}e^{it\Delta} f^{\omega} + D') \big\|_{X^{\alpha + \sigma, b'-1}_{\delta}}  
    & \lesssim (\log (1/\varepsilon))^2 \,  \|D - D'\|_{X^{\alpha + \sigma,b}_{\delta}}, \label{eq:QsmoothingDD'} \\
    \big\| \mathcal{Q}(e^{it\Delta}f^{\omega} + D) - \mathcal{Q}(e^{it\Delta} P_{\leq M} f^\omega + D) \big\|_{X^{\alpha + \sigma, b'-1}_\delta} 
    & \lesssim \frac{(  \log (1/\varepsilon) )^{5/2}}{M^{\varepsilon_1}}, \quad 0 < \varepsilon_1 \ll 1, \label{eq:Qsmoothing_LinearDifference}
\end{align} 
for every $\omega \in A_{\varepsilon}$ and for every $M \in 2^{\mathbb N} \cup \{\infty\}$.
\end{thm}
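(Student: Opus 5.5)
We propose to expand $\mathcal Q(z+D)$, with $z = P_{\leq M} e^{it\Delta} f^\omega$, multilinearly. Writing $\mathcal Q$ as the quintic form $\mathcal Q(u_1,\dots,u_5) = u_1\overline{u_2}u_3\overline{u_4}u_5 - (\text{renormalizing terms})$, in the convention \eqref{eq:def-iotaj}, we get $2^5$ terms $\mathcal Q(w_1,\dots,w_5)$ with each $w_j \in \{z, D\}$. Estimate \eqref{eq:QsmoothingD} then reduces to bounding each such term in $X^{\alpha+\sigma,b'-1}_\delta$ by $(\log 1/\varepsilon)^{k/2}$, where $k$ is the number of random inputs. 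The difference estimates follow from the same bounds by multilinearity. For \eqref{eq:QsmoothingDD'}, telescoping $D\to D'$ leaves at least one slot equal to $D-D'$, so at most four random factors appear, which explains the power $(\log 1/\varepsilon)^2$. For \eqref{eq:Qsmoothing_LinearDifference}, telescoping puts $P_{>M}e^{it\Delta}f^\omega$ in one slot. We plan to run every bound with the random input at regularity $\alpha-\varepsilon_1$ in place of $\alpha$; this spends an $M^{-\varepsilon_1}$ factor on the high-frequency piece. Uniformity in $M\in 2^{\mathbb N}\cup\{\infty\}$ comes from dyadic union bounds as in Appendix~\ref{app:probprel}, with the $\log$ losses absorbed into $\varepsilon_1$.

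\textbf{Deterministic terms.} Terms with all five slots equal to $D$, or with many $D$'s, are handled by duality and the periodic Strichartz estimate $L^4_{t,x}$ (or $L^6$ with $\varepsilon$-loss) for $X^{0,b}$ functions. Because $\alpha+\sigma>1/2=s_c$, the usual quintic $X^{s,b}$ estimate of Bourgain in the subcritical range applies. Terms with a random factor $z$ placed at a low frequency, i.e.\ not among the two highest frequencies, are also straightforward. We put $z$ in $L^\infty_{t,x}$, which is almost surely bounded by $(\log 1/\varepsilon)^{1/2}N^{-\alpha+}$ by the argument preceding \eqref{eq:prob-tail}. We then use the $D$ factors and the smoothing of $b'-1$ against the modulation; the high-frequency factor determines the output regularity.

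\textbf{Main obstacle.} The difficult case is when random factors carry the highest frequencies, especially the purely random term $\mathcal Q(z,\dots,z)$ and the terms with one $D$ and several high-frequency $z$'s. Here we must gain $\sigma$ almost $1/2$ derivatives. We would pass to Fourier coefficients: after time localization with $\eta$ and duality, the $X^{\alpha+\sigma,b'-1}$ norm is bounded by weighted sums over the resonance set $n=n_1-n_2+n_3-n_4+n_5$, $|\Omega|\lesssim$ modulation, with $\Omega = |n|^2 - |n_1|^2 + |n_2|^2 - |n_3|^2 + |n_4|^2 - |n_5|^2$. We then split according to pairings $n_{2j}=n_{2j\pm1}$. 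The renormalization in $\mathcal Q$ removes exactly the pairings that would otherwise produce linear-in-$u$ resonant terms.

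For the unpaired random parts we apply the random tensor estimates of \cite{DNY2022}. These give a bound for the multilinear Gaussian chaos in terms of the operator norms $\|h\|_{n_A\to n_B}$ of the base tensor $h_{n n_1\cdots n_5} = \mathbbm 1_{\text{resonance}}\mathbbm 1_{|\Omega-\text{m}|\le 1}$, maximized over partitions $(A,B)$, at cost $(\log 1/\varepsilon)^{k/2}N_{(1)}^{\varepsilon_1}$. These operator norms are controlled by lattice point counts for circles and annuli in $\mathbb Z^2$. For the mixed terms containing $D$, we use the random-tensor operator bound with $D$ treated as an arbitrary $\ell^2$ input. For the paired parts, the pair collapses to a deterministic sum $\sum\langle n\rangle^{-2-2\alpha}$, which converges, and the resulting lower-order multilinear term is handled by the same machinery.

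The counting bounds must yield, in each dyadic configuration, the output factor $N^{\alpha+\sigma}$ times $\prod N_j^{-1-\alpha}$ with a net decay $N_{(1)}^{-\varepsilon_1}$. Because $\sigma<1/2$ is essentially the sharp smoothing, this bookkeeping over all orderings $N_{(1)}\ge\dots\ge N_{(5)}$ and pairings is where we expect the real work to lie. We would organize it around the case distinctions $\max_C$, $\operatorname{mid}_C$, $\min_C$ introduced in the notation.
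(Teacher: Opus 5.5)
Your architecture is the paper's: multilinear expansion and telescoping, a deterministic multilinear Strichartz estimate whenever a $D$ occupies the top frequency (Proposition~\ref{prop:Qsmoothing-mest}), the Fourier-restriction reduction to a fixed resonance level $\Omega=\mu$, the pairing decomposition $\mathcal Q=\sum_\ell\mathcal Q_\ell$, and the random tensor bound of Theorem~\ref{thm:Random_Tensor} combined with the Schur test and lattice counting. The gap is in the terms with a single random factor at frequency $N_{(1)}$ and $D$ in the other four slots (the paper's $RDDDD$ and $DRDDD$).

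First, you put terms ``with many $D$'s'' under the deterministic quintic estimate. What decides the case, however, is only which factor carries $N_{(1)}$. If that factor is $z$, which lies only in $H^{\alpha-}$, no deterministic estimate gains $\sigma\approx\frac12$ derivatives near the paraboloid. These terms are in fact the bottleneck of the whole argument: they are the only ones whose bound lacks an extra factor $N_{(2)}^{-1}$. The terms with one $D$ and several high-frequency $z$'s, which you single out, are not the hard ones.

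Second, for these terms it is not enough to treat $\prod_{j\in\mathcal D}D_{n_j}$, as you propose, as an arbitrary element of $\ell^2_{n_{\mathcal D}}$ and bound by $\|T\|_{n_{\mathcal D}\to n}\prod_j\|D_{n_j}\|$. Take $N_1=N_{(1)}$ and $N_2=\dots=N_5=K$, a fixed fraction of $N_1$. For fixed $n$, the index $n_1$ is determined by $(n_2,\dots,n_5)$, and $\Omega$ takes only $O(N_1K)$ values. So for the worst $\mu$ there are $\gtrsim K^7/N_1$ admissible tuples, and a single row already gives $\|T\|^2_{n_{\mathcal D}\to n}\gtrsim N_1^{-3-2\alpha}K^7$ for typical $\omega$. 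Since $\|D_{n_j}\|^2\lesssim K^{-2s}$, the best this route can yield is $N_1^{4-8s-2\alpha}$. That is $\lesssim N_{(1)}^{-1-2\alpha}$ only if $s\geq 5/8$, which is impossible when $\alpha\leq 1/8$ and $\sigma<\frac12$.

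The missing idea is to exploit the tensor-product structure of the $D$ inputs. Apply Cauchy--Schwarz in one of the low-frequency $D$ indices and move it to the output side, i.e.\ bound $\|K^\ell\|_{n_{\mathcal D\setminus\mathcal D'}\to n\,n_{\mathcal D'}}$. For $RDDDD$ the paper uses $n_2n_3n_5\to nn_4$ with $N_4\le N_2$, which by Theorem~\ref{thm:Random_Tensor} requires $\|h\|_{n_1n_2n_3n_5\to nn_4}$ and $\|h\|_{n_2n_3n_5\to nn_1n_4}$. The Schur test and Lemma~\ref{lemma:combest-nopairings-cubic} then give $N_1^{-1-2\alpha}N_{(2)}^{-1}(N_2N_3N_4N_5)^{1-2\alpha-2\sigma}$ and $N_1^{-2-2\alpha}\operatorname{min}_{\{2,3,5\}}N_4^{-2\alpha-2\sigma}(N_2N_3N_5)^{1-2\alpha-2\sigma}$. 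Both are $\lesssim N_{(1)}^{-1-2\alpha+2\varepsilon_1}$ precisely because $2\alpha+2\sigma>1$, and the final dyadic summation in $N$ then needs only $\sigma<\frac12$.
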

We postpone the proof of Theorem~\ref{thm:to-pconv} to Sections \ref{sec:smoothing} and \ref{sec:proof-smoothing}.
As usual, as a consequence of \eqref{eq:QsmoothingD} and \eqref{eq:QsmoothingDD'}, we get the following local well-posedness result. 

\begin{coro}\label{coro:LWP}
    Let $\alpha > 0$ and $f^{\omega}$ as in \eqref{eq:random-data_2D}. 
    Let $0 \leq \sigma < 1/2$ such that $\alpha + \sigma > 1/2$. Then, for every $\varepsilon > 0$ small enough, the following is true with probability $\geq 1 - \varepsilon$. There exists $\delta > 0$ such that the Cauchy problem
    \begin{equation}\label{eq:cauchy-nls-gauge-pre}
    \left\{
    \begin{array}{rcl}
         i\, \partial_t D_N +  \Delta D_N & = & P_{\leq N}\mathcal{Q}( P_{\leq N} e^{it\Delta} f^{\omega} + D_N), \\
         D_N(0,x) & = & 0,
    \end{array}
    \right.
        \end{equation}
    admits a unique solution $D_N \in X^{\alpha + \sigma, b}_\delta$ for any $N \in 2^{\mathbb N} \cup \{\infty\}$.

    Consequently, for almost every $\omega$ there exists $\delta_\omega > 0$ such that for every $N \in 2^{\mathbb N} \cup \{\infty \}$, \eqref{eq:cauchy-nls-gauge-pre} admits a unique solution $D_N \in X^{\alpha + \sigma,  b}_{\delta_\omega}$. 
        In particular, $u = P_{\leq N} e^{it\Delta}f^\omega + D_N$ solves \eqref{eq:nlsTruncated} and 
        \begin{align}\label{eq:Hs-nsmoothing}
            u - P_{\leq N}e^{it\Delta} f^{\omega} = D_N \in C([-\delta_\omega,\delta_\omega],H^{\alpha + \sigma}(\mathbb T^2)).
        \end{align}
\end{coro}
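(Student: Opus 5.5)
The plan is a standard contraction argument for the map $\Gamma_N^\omega$ in \eqref{eq:map-fp-prob}, driven by Theorem~\ref{thm:to-pconv} and the linear $X^{s,b}$ estimates of Appendix~\ref{app:Deterministic}. Fix $\varepsilon>0$ and let $A_\varepsilon$ be the set given by Theorem~\ref{thm:to-pconv}, with some reference time, say $\delta_0=1$. For $0<\delta\le \delta_0$, the restriction norms are monotone in $\delta$. Hence the bounds \eqref{eq:QsmoothingD}--\eqref{eq:QsmoothingDD'} remain valid on $[0,\delta]$ for all $\omega\in A_\varepsilon$ and all $M\in 2^{\mathbb N}\cup\{\infty\}$, uniformly in $\delta$.

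We combine these bounds with the inhomogeneous estimate
\[
\Big\|\int_0^t e^{i(t-t')\Delta}F(t')\,dt'\Big\|_{X^{s,b}_\delta}\lesssim \delta^{b'-b}\|F\|_{X^{s,b'-1}_\delta},
\]
valid for $b<b'$ (so we choose $\beta<\beta'$), together with the boundedness of $P_{\le N}$ on $X^{s,b'-1}_\delta$. Taking $M=N$, this gives, for all $D,D'$ in the unit ball $B$ of $X^{\alpha+\sigma,b}_\delta$,
\[
\|\Gamma_N^\omega D\|_{X^{\alpha+\sigma,b}_\delta}\le C\delta^{\beta'-\beta}(\log(1/\varepsilon))^{5/2},\qquad
\|\Gamma_N^\omega D-\Gamma_N^\omega D'\|_{X^{\alpha+\sigma,b}_\delta}\le C\delta^{\beta'-\beta}(\log(1/\varepsilon))^{2}\|D-D'\|_{X^{\alpha+\sigma,b}_\delta}.
\]
We then choose $\delta=\delta(\varepsilon)\sim(\log(1/\varepsilon))^{-5/(2(\beta'-\beta))}$ small enough that both constants are at most $1/2$. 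With this choice, $\Gamma_N^\omega$ is a contraction of $B$, uniformly in $N\in 2^{\mathbb N}\cup\{\infty\}$, and the Banach fixed point theorem yields $D_N$.

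Uniqueness in the whole space $X^{\alpha+\sigma,b}_\delta$, and not merely in $B$, follows by a continuity argument. Since $b>1/2$, the norm $\|D\|_{X^{s,b}_{t}}$ is continuous in $t$. Given two solutions, they coincide on a short interval where both lie in the ball, and the coincidence propagates; if needed, one can rescale the radius of the ball and shrink $\delta$ accordingly. The main point requiring care is exactly this: Theorem~\ref{thm:to-pconv} is stated only for the unit ball, so the uniqueness claim should be read as uniqueness in the ball (or handled by this time-continuity argument).

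For the almost sure statement, we set $\Omega'=\bigcup_{k}A_{1/k}$, which has full probability. For $\omega\in\Omega'$, pick $k$ with $\omega\in A_{1/k}$ and take $\delta_\omega=\delta(1/k)$. Negative times are handled symmetrically, using time reversal and conjugation. Finally, $X^{\alpha+\sigma,b}_\delta\hookrightarrow C([-\delta,\delta],H^{\alpha+\sigma})$ for $b>1/2$, and this gives \eqref{eq:Hs-nsmoothing}. The identity $u=P_{\le N}e^{it\Delta}f^\omega+D_N$ solving \eqref{eq:nlsTruncated} follows directly from the Duhamel formula \eqref{eq:Duhamel_Q}.
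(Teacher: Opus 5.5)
Your proposal is correct and is essentially the paper's proof. Both run a contraction for $\Gamma_N^\omega$ on the unit ball of $X^{\alpha+\sigma,b}_\delta$: Lemma~\ref{lemma:pre-rest-spaces} gives a gain $\delta^{\gamma}$ with $\gamma=b'-b$, then \eqref{eq:QsmoothingD}--\eqref{eq:QsmoothingDD'} are applied uniformly in $N$, and a union over a countable family of good events gives the almost-sure statement. You index that union by $\varepsilon=1/k$ where the paper indexes by $\delta=1/k$. Your monotonicity-in-$\delta$ remark merely makes the good set independent of $\delta$; it works because the minimal-norm extension from $[0,\delta]$ is linear and does not increase norms, so the difference estimate transfers too.

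As in the paper, what the fixed-point argument delivers is uniqueness in the ball $B_\delta$, and you were right to flag this. Your time-continuity sketch for unconditional uniqueness is not needed. Its ``propagation'' step would also require rerunning the probabilistic estimates on time-translated windows, so it should not be relied on as written.
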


\begin{proof}
It suffices to prove that the map $\Gamma_N^\omega$ in \eqref{eq:map-fp-prob} defined on the ball
\begin{align}
    B_\delta 
    = \{ D \in X^{\alpha + \sigma,b}_{\delta} \, : \, \|D\|_{X^{\alpha + \sigma, b}_\delta} \leq 1 \}
\end{align}
is a contraction. 
Let $\varepsilon > 0$ and $\gamma > 0$ small enough. By Lemma \ref{lemma:pre-rest-spaces} and \eqref{eq:QsmoothingD} in Theorem~\ref{thm:to-pconv},
\begin{align}\label{eq:Before_Contraction_Pre}
    \|\Gamma_N^\omega(D)\|_{X^{\alpha + \sigma,b}_{\delta}} \lesssim \delta^{\gamma} \| \mathcal{Q} (P_{\leq N}e^{it\Delta} f^{\omega} + D)\|_{X^{\alpha + \sigma, b - 1 + \gamma}_{\delta}} \lesssim \delta^{\gamma}\Big(  \log \frac{1}{\varepsilon} \Big)^{5/2}, 
    \qquad \forall D \in B_\delta, 
\end{align}
with probability $\geq 1 - \varepsilon$, uniformly in $N$. Similarly, by \eqref{eq:QsmoothingDD'},
for $D_1, D_2 \in B_{\delta}$ we have 
\begin{align}\label{eq:Before_Contraction}
    \|\Gamma_N^\omega(D_1) - \Gamma_N^\omega(D_2)\|_{X^{\alpha + \sigma,b}_{\delta}} &\lesssim \delta^{\gamma}  \|\mathcal{Q}(P_{\leq N}e^{it\Delta}f^{\omega} + D_1) - \mathcal{Q}(P_{\leq N}e^{it\Delta}f^{\omega} + D_2)\|_{X^{\alpha + \sigma,b - 1 + \gamma}_{\delta}} \\
    &\lesssim \delta^{\gamma} \Big(  \log \frac{1}{\varepsilon} \Big)^2\|D_1 - D_2\|_{X^{\alpha + \sigma,b}_\delta},
\end{align}
with probability $\geq 1 - \varepsilon$, uniformly in $N$. 
Therefore, 
choosing $\delta = \delta_{\varepsilon, \gamma} > 0$ small enough such that
\begin{align}\label{eq:Condition_on_delta_epsilon_gamma}
    \delta^{\gamma}\Big(  \log \frac{1}{\varepsilon} \Big)^{5/2} < 1  \qquad \text{ and } \qquad  \delta^{\gamma}\Big(  \log \frac{1}{\varepsilon} \Big)^2 < 1,
\end{align}
there is a set $S_\delta$ with 
$\mathbb P(S_\delta) \geq 1 - \varepsilon$ such that $\Gamma_N^\omega$ is a contraction in $B_\delta$ for every $N$ and for every $\omega \in S_\delta$, 
and hence it has a unique fixed point $D_N$ which solves the Cauchy problem \eqref{eq:cauchy-nls-gauge-pre}.

For the second part of the statement, let
\begin{align}
S = \bigcup_{n \in \mathbb{N}} S_{1/n},
\qquad \text{ such that } \qquad 
\mathbb{P}(S^c) 
= \mathbb{P}\Big(\bigcap_{n \in \mathbb{N}} S_{1/n}^c \Big) 
\leq \mathbb{P}(S_{1/k}^c) 
\lesssim e^{-k^{\frac{2\gamma}{5}}},
\quad \forall k \in \mathbb N, 
\end{align}
where the last inequality comes from \eqref{eq:Condition_on_delta_epsilon_gamma}.
Letting $k \to \infty$ implies $\mathbb P(S) = 1$. 
Therefore, for almost every $\omega$, there exists $\delta_\omega > 0$ such that $\omega \in S_{\delta_\omega}$, which proves the statement. 
\end{proof}

Once we have the well-posedness,
Theorem~\ref{thm:to-pconv} and in particular \eqref{eq:Qsmoothing_LinearDifference} imply Theorem~\ref{thm:pconv} for the renormalized equation \eqref{eq:cauchy-nls-gauge}. 
We will extend the result to the original equation \eqref{eq:cauchy-nls_2D} in Section~\ref{sec:renorm}. 

\begin{proof}[Proof of Theorem~\ref{thm:pconv} for renormalized equation in \eqref{eq:cauchy-nls-gauge}]
By Corollary~\ref{coro:LWP}, almost every $\omega$ admits a time of existence $\delta_{\omega}>0$.
Hence, from \eqref{eq:red-pconv-rest}, it suffices to prove that 
    \begin{align}\label{eq:Final_Convergence_Property}
        \lim_{N \rightarrow \infty} \| D  - D_N \|_{X^{\alpha + \sigma, b}_{\delta_\omega}}  = 0.
    \end{align}
Denote, for simplicity, $s = \alpha + \sigma > 1/2$. 
Since $D$ and $D_N$ are the fixed points of $\Gamma^\omega$ and $\Gamma^\omega_N$ respectively, from Lemma \ref{lemma:pre-rest-spaces} we have
    \begin{align}\label{eq:Before_Absortion}
        \|  D - D_N \|_{X^{s,b}_{\delta_{\omega}}} 
        & 
        = \|  \Gamma^\omega(D) -  \Gamma^\omega_N(D_N) \|_{X^{s,b}_{\delta_{\omega}}} \\
        & \lesssim 
        \big\|  \mathcal{Q}(e^{it\Delta}  f^{\omega} + D) - P_{\leq N} \mathcal{Q}(e^{it\Delta} P_{\leq N} f^{\omega} + D_N) \big\|_{X^{s,b-1}_{\delta_{\omega}}}.
    \end{align}
    Decompose the difference inside the norm as
    \begin{align}
        \mathcal{Q}(e^{it\Delta}  f^{\omega} + D) - P_{\leq N} \mathcal{Q}(e^{it\Delta} P_{\leq N} f^{\omega} + D_N) 
        = I + II + III,  
    \end{align}
    where 
    \begin{align}
        I & = P_{\leq N} \left[ \mathcal{Q}(e^{it\Delta}  f^{\omega} + D) - \mathcal{Q}(e^{it\Delta} P_{\leq N} f^{\omega} + D)  \right], \\ 
        II & = P_{\leq N} \left[ \mathcal{Q}(e^{it\Delta} P_{\leq N} f^{\omega} + D) - \mathcal{Q}(e^{it\Delta} P_{\leq N} f^{\omega} + D_N) \right],  \\
        III & = P_{> N}\mathcal{Q}(e^{it\Delta}  f^{\omega} + D). 
    \end{align}
    By the bound in \eqref{eq:Before_Contraction}
    and by the choice of $\gamma$ and $\delta_\omega$ in \eqref{eq:Condition_on_delta_epsilon_gamma},  we have 
    \begin{align*}
        \| II \|_{X^{s,b-1}_{\delta_{\omega}}}
        \lesssim 
        \delta_{\omega}^{\gamma} \, 
        \| II \|_{X^{s,b-1+\gamma}_{\delta_{\omega}}} 
        <  \|D - D_N\|_{X^{s,b}_{\delta_{\omega}}}.
    \end{align*}
    It is thus absorbed in the left-hand side of \eqref{eq:Before_Absortion}, and hence 
    \begin{align}\label{eq:After_Absortion}
        \|  D - D_N \|_{X^{s,b}_{\delta_{\omega}}} 
        \lesssim  \big\|  I \big\|_{X^{s,b-1}_{\delta_{\omega}}} + \big\|  III \big\|_{X^{s,b-1}_{\delta_{\omega}}}.
    \end{align}
    Similarly, again from the choice in \eqref{eq:Condition_on_delta_epsilon_gamma}, from inequality \eqref{eq:Qsmoothing_LinearDifference} we get
    \begin{align}
        \| I \|_{X^{s,b-1}_{\delta_{\omega}}}
        \leq \delta_\omega^\gamma \,  \big\|   \mathcal{Q}(e^{it\Delta}  f^{\omega} + D) - \mathcal{Q}(e^{it\Delta} P_{\leq N} f^{\omega} + D) \big\|_{X^{s,b-1+\gamma}_{\delta_{\omega}}}
        & \lesssim  \frac{1}{N^{\varepsilon_1}},
    \end{align}
    for $\varepsilon_1 > 0$ small enough, 
    and therefore it converges to $0$ as $N \rightarrow \infty$. 
    On the other hand, 
    $III$ being the tail of 
    $\mathcal{Q}(e^{it\Delta}  f^{\omega} + D)$,  and $\| \mathcal{Q}(e^{it\Delta}  f^{\omega} + D) \|_{X^{s, b-1}_{\delta_{\omega}}}$ being finite by \eqref{eq:Before_Contraction_Pre}, 
    the norm of $III$ also converges to $0$ as $N \rightarrow \infty$. Thus, we get \eqref{eq:Final_Convergence_Property} as we wanted. 
    \end{proof}

\subsection{Justification of renormalization with gauge}\label{sec:renorm}
Once we proved Theorem~\ref{thm:pconv} for the renormalized equation \eqref{eq:cauchy-nls-gauge}, let us now prove it for the original equation in \eqref{eq:cauchy-nls_2D}. 
The argument below is based on \cite{Staffilani1997}, which we include for completeness. 

For almost every $\omega$, by Corollary~\ref{coro:LWP} we have the time of existence  $\delta_{\omega}$ of $D$. Denote 
\begin{align*}
    Y^{s, b}_{\delta_{\omega}}
    = \big\{ e^{it\Delta} f^{\omega} + D \, : \,  \|D\|_{X^{s, b}_{\delta_{\omega}}} \leq 1 \big\}, 
    \quad \quad s = \alpha + \sigma > \frac12, \quad b = \frac12 + \beta, 
\end{align*}
which we endow with the norm
\begin{align*}
    \|e^{it\Delta}f^\omega + D\|_{Y^{s, b}_{\delta_{\omega}}} = \| D\|_{X^{s, b}_{\delta_{\omega}}}.
\end{align*}
Observe that for any $\varepsilon_1 > 0$ we have
\begin{align}\label{eq:well-definition-Q}
    \left\| u \right\|_{L^4_{t,x}([0,\delta_\omega] \times \mathbb{T}^2)} \lesssim \|e^{i t \Delta} f^{\omega} \|_{L^4_{t,x}(\mathbb T \times \mathbb{T}^2)} + \| D \|_{X^{\varepsilon_1, b}_{\delta_{\omega}}} < \infty
    \qquad \quad \forall u \in Y^{s,b}_{\delta_{\omega}},  
\end{align}
since the $L^4$ norm of the random linear evolution is almost surely finite by its sub-gaussianity in Lemma \ref{lemma:Large_Deviation_Schrodinger_Linear},  and the bound for $D$ comes from the Strichartz estimate in \eqref{eq:strichartz} for $p=4$ and the transference principle in Lemma \ref{lemma:TransferLemma}. 
Hence, $\left\| u(t) \right\|_{L^4_x(\mathbb{T}^2)} < \infty$ for almost every $t \in [0,\delta_\omega]$. 

To transfer local well-posedness, we define the gauge operator $\mathcal{G} : Y^{s,b}_{\delta_{\omega}} \rightarrow \mathcal{G}(Y^{s,b}_{\delta_{\omega}})$ by
\begin{align*}
    \mathcal{G}u(t,x) = e^{-i\int_0^t \beta_u(t')dt'} \, u(t,x) , \qquad \qquad \beta_u(t) = 3 \int_{\mathbb{T}^2} |u(t,x)|^4 dx,
\end{align*}
which by \eqref{eq:well-definition-Q} is well-defined for almost every $\omega$. 
It is also injective since, from the definition, 
\begin{align}
    \mathcal Gu_1 = \mathcal G u_2
    \quad \Longrightarrow \quad 
    |u_1| = |u_2|
    \quad \Longrightarrow \quad 
    \beta_{u_1} = \beta_{u_2}
    \quad \Longrightarrow \quad 
    u_1 = u_2.
\end{align}
If we endow the image space with the metric
\begin{align*}
    d(v_1,v_2) = \big\| \mathcal{G}^{-1} (v_1) - \mathcal{G}^{-1}(v_2) \big\|_{Y^{s,b}_{\delta_{\omega}}}, 
\end{align*}
as shown in \cite{Staffilani1997} the space $(\mathcal{G}(Y^{s,b}_{\delta_{\omega}}),d)$ is a complete metric space.

For existence, if $u = e^{it\Delta}f^\omega + D$ is the unique solution to the renormalized Cauchy problem \eqref{eq:cauchy-nls-gauge} in $Y^{s,b}_{\delta_{\omega}}$ with datum $f^{\omega}$, a direct computation shows that $v = \mathcal G u \in \mathcal{G}(Y^{s,b}_{\delta_{\omega}})$ is a solution to \eqref{eq:cauchy-nls_2D} with the same datum $f^{\omega}$.

Regarding uniqueness, let $v_1,v_2$ be solutions to \eqref{eq:cauchy-nls_2D} in 
$ (\mathcal{G}(Y^{s,b}_{\delta_{\omega}}),d)$ with datum $f^{\omega}$.
Again, direct computation shows that $u_1 = \mathcal G^{-1}v_1$ and $u_2 = \mathcal G^{-1}v_2$ are solutions to the renormalized equation \eqref{eq:cauchy-nls-gauge} in $Y^{s,b}_{\delta_\omega}$ with datum $f^\omega$, and hence $u_1 = u_2$ by uniqueness.
Then, 
\begin{align}
    d(v_1, v_2) = \lVert u_1 - u_2 \rVert_{Y^{s,b}_{\delta_{\omega}}} = 0, 
\end{align}
and therefore $v_1 = v_2$. 
Moreover, this unique solution has the form 
\begin{align*}
    v = 
    e^{-3 i \int_0^t  \int_{\mathbb T^2} |e^{i t' \Delta} f^\omega(x) + D(t',x)|^4 \, dx dt'} \, 
    (e^{it\Delta} f^{\omega} + D), \qquad \qquad 
    \|D\|_{C([0,\delta_{\omega}] , H^s(\mathbb{T}^2))} \leq 1.
\end{align*}

Finally, we transfer the property of convergence to initial data.
Let $v$ be a solution to \eqref{eq:cauchy-nls_2D} with datum $f^{\omega}$, 
and $u = \mathcal G^{-1} v$ the corresponding solution to \eqref{eq:cauchy-nls-gauge}.
Then, since $v = \mathcal G u$, we have
\begin{align}
    \lim_{t \to 0} v(t,x) = \lim_{t \to 0} e^{-i \int_0^t \beta_u(t')\, dt'} \, u(t,x).
\end{align}
By \eqref{eq:well-definition-Q} and the dominated convergence theorem,  we have
\begin{align}\label{eq:After_DCT}
    \lim_{t \rightarrow 0} e^{i\int_0^t \beta_u(t') dt'} = 1, \qquad \qquad \omega\text{-almost surely, }
\end{align}
so if $u$ satisfies the pointwise convergence property in \eqref{eq:cauchy-nls_2D_Convergence}, we conclude that
\begin{align}\label{eq:Before_DCT}
    \lim_{t \to 0} v(t,x)
    = f^{\omega}(x) \qquad \text{ a.e., }  \qquad \omega\text{-almost surely}, 
\end{align}
and hence Theorem~\ref{thm:pconv} is proved.

\section{Proof of the nonlinear smoothing in Theorem~\ref{thm:to-pconv}}
\label{sec:smoothing}

To simplify notation, let us denote the random linear evolution by $R = e^{it\Delta}f^\omega$, in such a way that the operator $\mathcal Q$ is evaluated in linear-nonlinear decompositions of the form
\begin{align}
    u = P_{\leq M} R + D, \qquad \qquad \text{ for } \quad M \in 2^\mathbb N \cup \{\infty\}. 
\end{align}
In the proofs of \eqref{eq:QsmoothingD} and \eqref{eq:QsmoothingDD'}, the arguments are independent of $M$, and hence we will only write explicitly the case $M = \infty$. 
For \eqref{eq:Qsmoothing_LinearDifference}, the role of $M$ is important and we will keep it. 

Slightly abusing notation, we treat the nonlinear operator $\mathcal Q$ in \eqref{eq:cauchy-nls-gauge} as a multilinear operator\footnote{Strictly speaking, the multilinear operator also has the terms $u_3 \int_{\mathbb T^2} u_1 \overline{u_2}  \overline{u_4} u_5 \, dx$ and $u_5 \int_{\mathbb T^2}  u_1 \overline{u_2} u_3 \overline{u_4} \, dx$. However, $u_1$, $u_3$ and $u_5$ (as well as $u_2$ and $u_4$) are symmetric and hence it suffices to work with \eqref{eq:Q_Multilinear}.}
\begin{align}\label{eq:Q_Multilinear}
    \mathcal{Q}(u_1, u_2, u_3, u_4, u_5)
    = u_1 \overline{u_2} u_3 \overline{u_4} u_5 - u_1 \int_{\mathbb T^2} \overline{u_2} u_3 \overline{u_4} u_5 \, dx, 
\end{align}
in such a way that to prove \eqref{eq:QsmoothingD}, given $s = \alpha + \sigma$ and $b' = 1/2 + \beta'$, we need to bound 
\begin{align}\label{eq:decomp-symm-ui}
    \|\mathcal{Q}(R + D)\|_{X^{s,b'-1}_{\delta}} 
    & \leq  \sum_{u_i \in \{R, \,  D\}} \|\mathcal{Q}(u_1, u_2, u_3, u_4, u_5)\|_{X^{s,b'-1}_{\delta}} 
    \lesssim \sup_{u_i \in \{R, \,  D\} } \|\mathcal{Q}(u_1, u_2, u_3, u_4, u_5)\|_{X^{s,b'-1}_{\delta}}. 
\end{align} 
On the other hand, the left-hand side of \eqref{eq:QsmoothingDD'} can be bounded as
\begin{align}\label{eq:diff-ineqsDD'}
    \begin{array}{rrrrrrrr}
        \| \mathcal{Q}(R + D) - \mathcal{Q}(R + D') \|_{X^{s,b'-1}_{\delta}}
        \leq \big\| \mathcal{Q}( D - D',  & R + D, &  R + D, & R + D, & R + D) \big\|_{X^{s,b'-1}_{\delta}}  \\
        + \big\| \mathcal{Q}( R + D', & D - D', & R + D, & R + D, & R + D) \big\|_{X^{s,b'-1}_{\delta}}  \\ 
        + \big\| \mathcal{Q}( R + D', & R + D', & D - D', & R + D, & R + D) \big\|_{X^{s,b'-1}_{\delta}}  \\ 
        + \big\| \mathcal{Q}( R + D', & R + D', & R + D', & D - D', & R + D) \big\|_{X^{s,b'-1}_{\delta}}  \\
        + \big\| \mathcal{Q}( R + D', & R + D', & R + D', & R + D', & D - D') \big\|_{X^{s,b'-1}_{\delta}}.
    \end{array}
\end{align}
The same procedure shows that \eqref{eq:Qsmoothing_LinearDifference} can be bounded similarly, since 
\begin{align}\label{eq:diff-ineqs}
    & \| \mathcal{Q}(R + D) - \mathcal{Q}(P_{\leq M} R + D) \|_{X^{s,b'-1}_{\delta}}  \\
    & \qquad \qquad \qquad \begin{array}{rrrrrrrr}
        \leq \big\| \mathcal{Q}( P_{> M} R,  & P_{\leq M} R + D, & P_{\leq M} R + D, & P_{\leq M} R + D, & P_{\leq M} R + D) \big\|_{X^{s,b'-1}_{\delta}}  \\
        + \big\| \mathcal{Q}( R + D, & P_{> M} R , & P_{\leq M} R + D, & P_{\leq M} R + D, & P_{\leq M} R + D) \big\|_{X^{s,b'-1}_{\delta}}  \\ 
        + \big\| \mathcal{Q}( R + D, & R + D, & P_{> M} R, & P_{\leq M} R + D, & P_{\leq M} R + D) \big\|_{X^{s,b'-1}_{\delta}}  \\ 
        + \big\| \mathcal{Q}( R + D, & R + D, & R + D, & P_{> M} R, & P_{\leq M} R + D) \big\|_{X^{s,b'-1}_{\delta}}  \\
        + \big\| \mathcal{Q}( R + D, & R + D, & R + D, & R + D, & P_{> M} R) \big\|_{X^{s,b'-1}_{\delta}},
    \end{array}
\end{align}
and again, the proof will not see the difference between $P_{\leq M} R$ and $R$, so we treat all of them as $R$. However, it will see the effect of the tails $P_{> M} R$.
Hence, to prove Theorem \ref{thm:to-pconv} the following proposition suffices. 

\begin{prop}\label{prop:Qsmoothing-simp}
    Let $\delta>0$, $\alpha > 0$ and  $0 \leq \sigma < 1/2$ such that $\alpha + \sigma > 1/2$.  Let $M_i \geq 0$ and $u_i \in \{P_{\geq M_i} R,D\}$ for $i = 1, \ldots, 5$. 
    Define the partition $\mathcal R, \mathcal{D} \subset \{1, \ldots, 5\}$ by
    \begin{align}\label{eq:R-D-sets}
        \mathcal R = \{\, i \, : \, u_i = P_{\geq M_i} R \, \}, 
        \qquad \qquad 
        \mathcal D = \{\, i \, : \, u_i = D \, \}.
    \end{align}
    Then, for every $\varepsilon > 0$, there exists $A_{\varepsilon} \subset \Omega$ with $\mathbb{P}(A_{\varepsilon}) \geq 1 - \varepsilon$ such that
    \begin{align}\label{eq:Qsmoothing-simp-1}
        \|\mathcal{Q}(u_1,u_2,u_3,u_4,u_5) \|_{X^{\alpha + \sigma, b'-1}_{\delta}} 
        \lesssim \Big(  \log \frac{1}{\varepsilon} \Big)^{|\mathcal{R}|/2} \|D\|_{X^{\alpha + \sigma,b}}^{|\mathcal{D}|} \prod_{i \in \mathcal{R}} \frac{1}{\langle M_i \rangle^{\varepsilon_1} }, 
        \qquad \forall \omega \in A_\varepsilon, \quad 0< \varepsilon_1 \ll 1.
    \end{align}
\end{prop}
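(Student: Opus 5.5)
The plan is to reduce the $X^{s,b'-1}_\delta$ bound to a duality estimate for a multilinear expression in Fourier space, and then to treat the random and deterministic inputs differently. By duality, with $s = \alpha+\sigma$,
\begin{align}
\|\mathcal Q(u_1,\dots,u_5)\|_{X^{s,b'-1}_\delta} \lesssim \sup_{\|v\|_{X^{0,1-b'}}\le 1} \Big| \int\!\!\int \langle\nabla\rangle^{s}\mathcal Q(u_1,\dots,u_5)\, \overline{v}\, dx\, dt \Big|.
\end{align}
We decompose every input dyadically, $|n_j|\simeq N_j$ and $|n_0|\simeq N_0$. The subtraction $-u_1\int \overline{u_2}u_3\overline{u_4}u_5$ in \eqref{eq:Q_Multilinear} removes exactly the pairings $n_1 = n_0$, so the remaining sum runs over $n_0 = n_1-n_2+n_3-n_4+n_5$ with $n_1\neq n_0$. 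It also suffices to sum dyadic bounds that carry a small negative power of $N_{(1)}$; this gain also yields the factor $\prod_{i\in\mathcal R}\langle M_i\rangle^{-\varepsilon_1}$, because $u_i = P_{\ge M_i}R$ forces $N_i \gtrsim M_i$.

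\textbf{Deterministic regime.} Suppose the output frequency is dominated by a $D$ input, i.e.\ $N_0\lesssim \max_{\mathcal D}$. Then the weight $\langle n_0\rangle^{s}$ is absorbed by that $D$. The remaining factors are handled with the $L^4$ Strichartz estimate \eqref{eq:strichartz}, transferred to $X^{0+,b}$ by Lemma~\ref{lemma:TransferLemma}, together with bilinear refinements, for the $D$ inputs, and with the almost sure $L^\infty_{t,x}$ bounds of Lemma~\ref{lemma:Large_Deviation_Schrodinger_Linear} (made uniform in $N$ as in Appendix~\ref{app:probprel}) for the $R$ inputs. Since $s>1/2=s_c$, the standard subcritical multilinear estimate closes with room to spare. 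Each $R$ contributes $N_j^{-\alpha+}(\log 1/\varepsilon)^{1/2}$, which produces the factor $(\log 1/\varepsilon)^{|\mathcal R|/2}$.

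\textbf{Random regime.} Suppose $N_0 \gg \max_{\mathcal D}$, so the output is driven by high-frequency $R$ inputs. Here deterministic arguments lose, since $R\in H^{\alpha-}$ only, and the required gain of $\sigma<1/2$ derivatives must come from randomness. The plan:
\begin{enumerate}
\item Pass to Fourier coefficients in time using the modulation weights $\langle\tau_j+|n_j|^2\rangle$. Up to harmless $\delta$-localisation, this reduces the expression to a tensor $h_{n_0 n_1\cdots n_5}$ supported on $n_0=\sum\pm n_j$ with $|\Omega|=\big||n_0|^2-\sum \pm |n_j|^2\big|\lesssim$ the maximal modulation.
\item Contract this tensor against the Gaussians $g_{n_j}/\langle n_j\rangle^{1+\alpha}$ for $j\in\mathcal R$.
\item Split according to pairings among the random indices. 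Pairings of type $n_j=n_k$ with $\iota_j\ne\iota_k$ can survive only inside $\mathcal R$, the pairing $n_1=n_0$ having been renormalised away.
\item For the unpaired part, apply the random tensor estimate of \cite{DNY2022}. It bounds $\|\sum h\prod g\|_{n_0 n_{\mathcal D}}$, with probability $1-\varepsilon$ up to $(\log 1/\varepsilon)^{|\mathcal R|/2}N_{(1)}^{0+}$, by $\max$ over partitions $(B,C)$ of $\mathcal R$ of $\|h\|_{n_B n_0\to n_C n_{\mathcal D}}$.
\item Estimate these operator norms by lattice point counting in $\mathbb Z^2$. The key count is that the number of $n\in\mathbb Z^2$ with $|n|\lesssim N$ and $|n|^2$ in an interval of length $L$ (after fixing the other variables) is $\lesssim N^{0+}\min(N^2, NL^{1/2}, \dots)$, combined with Schur-type bounds.
\item For the paired parts, the pair sums $\sum_{n}|g_n|^2/\langle n\rangle^{2+2\alpha}$ are almost surely finite. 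These terms therefore reduce to lower-order multilinear expressions, treated again by steps 4--5.
\end{enumerate}

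\textbf{Main obstacle.} The hardest case is expected to be the high$\times$high$\times\cdots\to$high configurations where $N_0\sim N_{(1)}$ is attained by an $R$ input and one or two $D$ inputs sit at intermediate frequency. The counting must then produce exactly $N_0^{\sigma}$ with $\sigma$ up to $1/2$, which matches the sharp smoothing limit from \cite{DNY2024}, so no slack is available. I would first try the partition in which all $D$ and $n_0$ variables are on one side and the random ones are split to balance the two counting bounds. If that fails, I would use the extra $\langle\Omega\rangle^{-(1-b')}$ modulation gain to restrict $|n_0|^2-|n_1|^2$ to thin shells, where the two-dimensional circle counting gives the missing $N^{1/2}$.
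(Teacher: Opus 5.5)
Your overall architecture is close to the paper's: deterministic estimates when a $D$ controls the output frequency, and random tensor estimates plus lattice counting otherwise. Your different dichotomy ($N_0\lesssim\max_{\mathcal D}$ instead of ``$u_{(1)}=D$'') and your DNY-style handling of $\Omega$ are not the issue.

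\textbf{The gap is in step~4.} You apply the random tensor estimate to the operator $\ell^2_{n_{\mathcal D}}\to\ell^2_{n_0}$ (or its adjoint). Your bound therefore necessarily contains the partition norm $\|h\|_{n_{\mathcal R}n_{\mathcal D}\to n_0}$, with every index except $n_0$ on the input side, and this norm is too large.

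Take $RDDDD$ with $N_1\simeq N_0$ and $N_2,\dots,N_5\simeq K\ll N_1$. This configuration lies in your random regime.
\begin{itemize}
\item Since $n_1$ is determined by $n_0,n_2,\dots,n_5$, the squared norm equals $\max_{n_0}\#\{(n_2,\dots,n_5):\Omega=\mu\}$.
\item For fixed $n_0$ one has $|\Omega|\lesssim N_1K$, while there are $\sim K^8$ tuples. So for some $\mu$ this count is $\gtrsim K^7/N_1$.
\item You cannot avoid the worst $\mu$. It absorbs the modulations $\tau_j$ of the $D$ inputs, and $b=\frac12+\beta$ gives essentially no decay in those.
\item With $\|D_{n_j}\|_{n_j}^2\lesssim N_j^{-2\alpha-2\sigma}$, your route therefore gives at best $N_1^{-3-2\alpha}K^{3-4(2\alpha+2\sigma-1)}$ for $\sum_n|\mathcal Q_1(u_1,\dots,u_5)_n|^2$.
\item Proposition~\ref{prop:Final_Estimate} needs $N_1^{-1-2\alpha+}$. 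Your bound fails once $K\gg N_1^{2/3}$, and by almost a full power of $N_1$ when $K$ is comparable to $N_1$.
\end{itemize}

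The remedies you list do not help:
\begin{itemize}
\item In the random tensor estimate the maximum over splittings of the random indices is not yours to choose, so you cannot ``balance'' the random partition.
\item Putting $n_0$ and all $n_{\mathcal D}$ on one side is a Hilbert--Schmidt-type bound and is even worse.
\item A modulation or thin-shell gain is irrelevant, because $\Omega$ is already pinned to a single value.
\end{itemize}

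\textbf{The missing idea} is to exploit the product structure $\prod_{j\in\mathcal D}D_{n_j}$ of the deterministic input before invoking the random tensor estimate. Apply Cauchy--Schwarz in a well-chosen subset $\mathcal D'\subset\mathcal D$, typically the lowest-frequency $D$, so that $\|T_\ell(\prod_{j\in\mathcal D}D_{n_j})\|_{\ell^2_n}\le\|K^\ell\|_{n_{\mathcal D\setminus\mathcal D'}\to n\,n_{\mathcal D'}}\prod_{j\in\mathcal D}\|D_{n_j}\|_{n_j}$.

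In the example, take $\mathcal D'=\{4\}$. Once the other variables are fixed, the pair $(n,n_4)$ is confined to a circle. The two partition norms become $\|h\|^2_{n_1n_2n_3n_5\to nn_4}\lesssim K^{4+}$ and $\|h\|^2_{n_2n_3n_5\to nn_1n_4}\lesssim N_1K^{3+}$. These give $N_1^{-2-2\alpha}$ and $N_1^{-1-2\alpha}K^{-1}$, up to the factor $K^{-4(2\alpha+2\sigma-1)+}$, and this is what closes $\sigma<\frac12$.

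The paper makes this choice case by case. For $RDDDD$, $DRDDD$, $RRDDD$, $RDRDD$ and $DRDRD$ it moves whichever of $n_4,n_5$ has the lower frequency to the output side. Without this device, or some substitute, your step~4 does not yield the required bound.
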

Indeed, the choice $M_i = 0$ for all $i$ directly implies  \eqref{eq:QsmoothingD}, and it also implies \eqref{eq:QsmoothingDD'} if we treat the cases $u_i \in \{D', \, D - D' \}$ identically to $u_i = D$. 
Finally, \eqref{eq:Qsmoothing_LinearDifference} follows by choosing $M_i = M$ for the relevant index $i$ and $M_j = 0$ for the rest $j \neq i$.

From now on, we fix a choice of 
$u_i \in \{ P_{\geq M_i} R, D\}$ for $i = 1, \ldots, 5$. 
To simplify notation, denote again $s = \alpha + \sigma$, and set $M_i = 0$ for $i \in \mathcal D$ in such a way that if we decompose each $u_i$ into Littlewood-Paley projections, we can write
\begin{align}\label{eq:decomp-lp}
    \lVert& \mathcal{Q}(u_1, u_2, u_3, u_4, u_5) \rVert_{X^{s,b'-1}_\delta}^2 \\
    & \leq  
    \sum_N
    N^{2s} 
    \bigg( 
    \sum_{\substack{N_1, \ldots, N_5 \\ N_i \geq M_i }} \lVert P_N \mathcal{Q}(P_{N_1}u_{1}, P_{N_2}u_{2}, P_{N_3} u_{3}, P_{N_4}u_{4}, P_{N_5}u_{5}) \rVert_{X^{0,b'-1}_\delta} \bigg)^2  \\
    & \lesssim 
    \sum_N
    N^{2s} \bigg( 
    \sum_{\substack{N_{(1)}\geq \ldots\geq N_{(5)} \\ N_i \geq M_i}} \sum_{\gamma \in S_5} \lVert P_N \mathcal{Q} (P_{N_{(\gamma(1))}}u_{1}, P_{N_{(\gamma(2))}}u_{2}, P_{N_{(\gamma(3))}} u_{3}, P_{N_{(\gamma(4))}}u_{4}, P_{N_{(\gamma(5))}}u_{5}) \rVert_{X^{0,b'-1}_\delta} \bigg)^2  \\
    & \lesssim 
    \sup_{\gamma \in S_5}
    \sum_N
    N^{2s} 
    \bigg( 
    \sum_{\substack{N_{(1)}\geq \ldots\geq N_{(5)} \\ N_i \geq M_i}}  \lVert P_N \mathcal{Q} (P_{N_{(\gamma(1))}}u_{1}, P_{N_{(\gamma(2))}}u_{2}, P_{N_{(\gamma(3))}} u_{3}, P_{N_{(\gamma(4))}}u_{4}, P_{N_{(\gamma(5))}}u_{5}) \rVert_{X^{0,b'-1}_\delta} \bigg)^2, 
\end{align}
where $S_5$ is the symmetric group of permutations of $5$ elements. 
Here, each $\gamma \in S_5$ determines how the frequencies $N_1, \ldots, N_5$ are ordered, in such a way that $N_i = N_{(\gamma(i))}$, and $N_{(i)}$ denotes the $i$-th largest frequency.
We denote by $u_{(i)}$ the input element corresponding to the frequency $N_{(i)}$. 

From now on, we fix an order $\gamma \in S_5$. We will prove Proposition~\ref{prop:Qsmoothing-simp} by separating cases in the upcoming subsections. 
Before proceeding, we give a lemma about the integrability of $R$ and $D$.
\begin{lem}\label{lemma:L8bound}
For every $\delta_1 > 0$ and $b > 1/2$,  we have 
\begin{equation}\label{eq:L8bound-1}
    \|
     D 
    \|_{L^{8 + \delta_1}_{t,x}(\mathbb{T} \times \mathbb{T}^2)}
    \lesssim 
    \|
     D 
    \|_{X^{\frac12 + \delta_1, b}}.
\end{equation}
Let $R = e^{it\Delta} f^\omega$ with $f^\omega$ as in \eqref{eq:random-data_2D}, and let $p \geq 1$. For every $\varepsilon > 0$, with probability $\geq 1 - \varepsilon$ we have 
\begin{equation}\label{eq:Lp_bound-Prob}
        \lVert  P_{N}R \rVert_{L_{t,x}^p(\mathbb{T} \times \mathbb{T}^2)}
        \lesssim \Big( \log \frac{1}{\varepsilon} \Big)^{1/2} \frac{(\log N )^{1/2}}{N^{\alpha}}, 
        \qquad \quad \forall N \in 2^{\mathbb N}.
    \end{equation}
\end{lem}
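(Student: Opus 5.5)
The two bounds in Lemma~\ref{lemma:L8bound} are independent: \eqref{eq:L8bound-1} is a deterministic Strichartz--Sobolev estimate, and \eqref{eq:Lp_bound-Prob} is a large deviation bound for the Gaussian linear evolution, made uniform in $N$.

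For \eqref{eq:L8bound-1}, I would use the transference principle in Lemma~\ref{lemma:TransferLemma}. It reduces the claim to the linear estimate
\begin{align}
    \| e^{it\Delta} \phi \|_{L^{8+\delta_1}_{t,x}(\mathbb T \times \mathbb T^2)} \lesssim \|\phi\|_{H^{\frac12 + \delta_1}(\mathbb T^2)}.
\end{align}
Decompose $\phi$ dyadically and fix a piece $P_K\phi$. Bourgain's periodic Strichartz estimate in $L^4_{t,x}$ on $\mathbb T\times\mathbb T^2$ gives a loss of only $K^{\varepsilon_1}$ (this is \eqref{eq:strichartz} with $p=4$). Bernstein's inequality in $x$ (2D) and in $t$ (frequency $\lesssim K^2$) gives
\begin{align}
    \| P_K e^{it\Delta}\phi\|_{L^\infty_{t,x}} \lesssim K^{2}\,\|P_K\phi\|_{L^2} .
\end{align}
I would instead interpolate $L^4$ with an $L^\infty_{t,x}$ bound obtained directly from the Fourier series, namely $\|P_K e^{it\Delta}\phi\|_{L^\infty_{t,x}}\lesssim K\|P_K\phi\|_{L^2}$ by Cauchy--Schwarz over $\sim K^2$ lattice points. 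With $\theta = 4/(8+\delta_1)$, this yields
\begin{align}
    \|P_K e^{it\Delta}\phi\|_{L^{8+\delta_1}}\lesssim K^{(1-\theta)+\varepsilon_1}\|P_K\phi\|_{L^2},
\end{align}
where $1-\theta = \tfrac12 + O(\delta_1)$. That exponent is too large by the $O(\delta_1)$ term. The better route is scaling: the critical Strichartz exponent for $L^p_{t,x}$ on $\mathbb T^2$ is $1-4/p$, which equals $\tfrac12$ at $p=8$, and the Bourgain--Demeter $\ell^2$ decoupling estimates give $L^p$ Strichartz for $p\geq 4$ with $\varepsilon$-loss above this scaling exponent, $1 - 4/p + \varepsilon_1$. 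At $p = 8+\delta_1$ this is below $\tfrac12 + \delta_1$. Summing the dyadic pieces with the extra $K^{-\varepsilon_1}$ margin (Cauchy--Schwarz in $K$) and then transferring to $X^{\frac12+\delta_1,b}$ gives \eqref{eq:L8bound-1}. The one point to check is that the $\varepsilon$-loss of decoupling is absorbed by $\delta_1$; I expect this to be the main technical step, though it is standard.

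For \eqref{eq:Lp_bound-Prob}, I would first take $p \ge 2$; the case $p<2$ follows from Hölder's inequality on the compact set $\mathbb T\times\mathbb T^2$. For fixed $(t,x)$,
\begin{align}
    P_N R(t,x)=\sum_{|n|\simeq N} g_n^\omega \langle n\rangle^{-1-\alpha}e^{i(nx - t|n|^2)}
\end{align}
is a Gaussian with variance $\sim \sum_{|n|\simeq N}N^{-2-2\alpha}\simeq N^{-2\alpha}$. Minkowski's inequality and Gaussian hypercontractivity then give, for $r\ge p$,
\begin{align}
    \big\| \|P_NR\|_{L^p_{t,x}}\big\|_{L^r_\omega}\le \big\| \|P_NR(t,x)\|_{L^r_\omega}\big\|_{L^p_{t,x}}\lesssim \sqrt r\, N^{-\alpha}.
\end{align}
By Chebyshev and optimization in $r$, this is equivalent to the subgaussian tail
\begin{align}
    \mathbb P\big(\|P_NR\|_{L^p}>\lambda N^{-\alpha}\big)\le C e^{-c\lambda^2}.
\end{align}
This is Lemma~\ref{lemma:Large_Deviation_Schrodinger_Linear}. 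To make it uniform in $N$, take $\lambda_N = C(\log(1/\varepsilon))^{1/2}(\log N)^{1/2}$ with $C$ large. Then
\begin{align}
    \sum_{N\in 2^{\mathbb N}} e^{-c\lambda_N^2}\le \sum_{k} \varepsilon^{cC^2} k^{-cC^2 \log(1/\varepsilon)} \le \varepsilon,
\end{align}
and the union bound, as in Appendix~\ref{app:probprel}, gives \eqref{eq:Lp_bound-Prob} simultaneously for all $N$ outside a set of probability at most $\varepsilon$.
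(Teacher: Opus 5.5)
Your argument is correct and follows the paper's route: the first bound comes from the Strichartz estimate \eqref{eq:strichartz} at $p=8+\delta_1$ (exponent $1-4/p<\tfrac12+\delta_1$), dyadic summation and Lemma~\ref{lemma:TransferLemma}, and the second from Gaussian moment bounds, Chebyshev and a union bound over dyadic $N$, as in Lemmas~\ref{lemma:Large_Deviation_Schrodinger_Linear} and~\ref{lemma:Strichartz_Prob_Uniform}. One correction to your detour: interpolating Bourgain's $L^4$ bound with the trivial $L^\infty$ bound does not fail, because $1-\theta = 1-\tfrac{4}{8+\delta_1} = \tfrac12 + \tfrac{\delta_1}{16+2\delta_1}$ is exactly the same scaling exponent and already lies below $\tfrac12+\delta_1$, so decoupling is not needed here.
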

\begin{proof}
The first property follows from the $L^8$ Strichartz estimate in \eqref{eq:strichartz} and the transference principle in Lemma \ref{lemma:TransferLemma}.
The second inequality is proved in Lemma~\ref{lemma:Strichartz_Prob_Uniform}. 
\end{proof}

\subsection{Multilinear estimates} \label{subsec:multi-est} 

Recall the notation $\iota_j$ for conjugates in Section~\ref{sec:Notation}. 
We start with the following deterministic multilinear estimates.

\begin{lem}\label{lemma:Multilinear_Estimates}
Let $s \geq 0$
and $N, N_1, \ldots, N_5 \in 2^{\mathbb{N}}$. For any $\varepsilon_1 > 0$ we have
\begin{align}\label{eq:Multilineal_Main}
    \bigg\lVert P_N \Big( \prod_{i=1}^5 (P_{N_i} u_i)^{\iota_i} \Big) 
    \bigg\rVert_{X^{s, b'-1}} 
    \lesssim 
    N_{(2)}^{\varepsilon_1} \, 
    \lVert P_{N_{(1)}} u_{(1)} \rVert_{X^{s,b}} \, 
    \prod_{i \, : \, N_i \neq N_{(1)}}
    \lVert P_{N_{(i)}} u_{(i)} \rVert_{L^8_{x,t}}, 
    \end{align}
and     \begin{align}\label{eq:Multilineal_Secondary}
    \bigg\lVert P_N \bigg[ 
    P_{N_1} u_1 \, 
    \bigg(\int \prod_{i=2}^5 (P_{N_i} u_i)^{\iota_i}  \, dx 
    \bigg)
    \bigg] 
    \bigg\rVert_{X^{s, b'-1}} 
    \lesssim 
    N_{(2)}^{\varepsilon_1} \, 
    \lVert P_{N_{(1)}} u_{(1)} \rVert_{X^{s,b}} \, 
    \prod_{i \, : \, N_i \neq N_{(1)}} \lVert P_{N_{(i)}} u_{(i)} \rVert_{L^8_{x,t}}.
    \end{align}
\end{lem}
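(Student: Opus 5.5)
We prove \eqref{eq:Multilineal_Main} by duality and then deduce \eqref{eq:Multilineal_Secondary} from it.

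\textbf{Duality.} Since $1-b' = \frac12 - \beta' < \frac12$, the dual of $X^{-s,1-b'}$ is $X^{s,b'-1}$. Hence it suffices to bound
\begin{align}
\Big| \int_{\mathbb R}\int_{\mathbb T^2} \overline{P_N v}\, \prod_{i=1}^5 (P_{N_i}u_i)^{\iota_i} \, dx\, dt \Big|
\end{align}
uniformly over $v$ with $\|v\|_{X^{-s,1-b'}} \le 1$.

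\textbf{Reduction to $s=0$.} By frequency support, the integral vanishes unless $N \lesssim N_{(1)}$. We may therefore trade $N^{s}$ for $N_{(1)}^{s}$ and reduce to the case $s=0$. The factor $u_{(1)}$ is then measured in $X^{0,b}$ and $v$ in $X^{0,1-b'}$.

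\textbf{H\"older split.} Apply H\"older in space-time with exponents
\begin{align}
\tfrac12\cdot\big(\text{the pair } P_{N_{(1)}}u_{(1)},\, P_Nv\big) + 4\cdot\tfrac18 = 1 .
\end{align}
The four lower-frequency factors go into $L^8_{t,x}$. The remaining product is $P_{N_{(1)}}u_{(1)} \cdot P_N v$, placed in $L^2_{t,x}$ via $L^4\times L^4$.

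\textbf{Strichartz for the top factor.} The periodic $L^4_{t,x}$ Strichartz estimate on $\mathbb T^2$ (Bourgain, with an $\varepsilon$-loss) combined with Lemma~\ref{lemma:TransferLemma} gives
\begin{align}
\|P_{N_{(1)}}u_{(1)}\|_{L^4} \lesssim N_{(1)}^{\varepsilon_1}\|u\|_{X^{0,b}} .
\end{align}

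\textbf{The dual factor $v$.} Here $b$-regularity is below $1/2$, so $L^4$ Strichartz cannot be used directly. Interpolate the $L^4$ Strichartz bound at $b>1/2$ with the trivial bound $L^2 = X^{0,0}$. This gives
\begin{align}
\|P_N v\|_{L^{4-}} \lesssim N^{\varepsilon_1}\|v\|_{X^{0,1-b'}} .
\end{align}
The small loss in the exponent is compensated by placing the other factors in $L^{8+}$ instead of $L^8$. This substitution is affordable because of Lemma~\ref{lemma:L8bound}, or by Bernstein at a cost of $N_{(i)}^{0+}$. Alternatively, if the paper's stated form is exactly $L^8$, we can put one low factor in $L^{8}$ and take the loss as a small power of $N_{(2)}$.

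\textbf{Main obstacle: the loss.} The derivative loss must appear only as $N_{(2)}^{\varepsilon_1}$, not $N_{(1)}^{\varepsilon_1}$. To achieve this, decompose the top frequency factor $P_{N_{(1)}}u_{(1)}$ and $P_N v$ into cubes of side $N_{(2)}$ (Galilean-invariant localization). The low factors have frequency $\lesssim N_{(2)}$, so they couple only essentially diagonal pairs of cubes. On each cube the $L^4$ Strichartz loss is $N_{(2)}^{\varepsilon_1}$, since Strichartz on frequency cubes of size $K$ loses $K^{\varepsilon}$ by Galilean invariance. Cauchy–Schwarz over the cubes then recovers the global norms, and almost-orthogonality restores $N^{s}\lesssim N_{(1)}^s$ cube by cube. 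This step is the delicate one.

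\textbf{Secondary estimate.} For \eqref{eq:Multilineal_Secondary}, the integral $\int\prod_{i=2}^5 (P_{N_i}u_i)^{\iota_i}\,dx$ is a function of $t$ alone, and multiplying by it does not change spatial frequencies. The same duality pairing applies, with $\overline{P_N v}\,P_{N_1}u_1$ integrated in $x$ separately. H\"older in $t$ and $x$ bounds the pairing by
\begin{align}
\|P_{N_1}u_1\, P_Nv\|_{L^2_t L^1_x} \cdot \Big\|\prod_{i=2}^5 (P_{N_i}u_i)^{\iota_i}\Big\|_{L^2_t L^1_x} .
\end{align}
Since $\mathbb T^2$ has finite measure, both factors are dominated by the same $L^4\times L^4$ and $L^8$ norms as before. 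The cube decomposition is again needed when $u_1$ is the top frequency.
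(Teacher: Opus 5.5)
Your outline for \eqref{eq:Multilineal_Main} follows the paper: duality, cubes of side $N_{(2)}$, H\"older with $L^4\times L^4\times(L^8)^4$, and Cauchy--Schwarz over nearly diagonal cube pairs. The problem is the dual factor. Interpolating the $L^4$ Strichartz bound with $L^2=X^{0,0}$ only places $P_Qv\in X^{0,\frac12-\beta'}$ in $L^{4-}$, so the H\"older exponents sum to more than $1$. Your repairs do not recover the stated $L^8$ form. Moving the low factors to $L^{8+}$ proves a different estimate. That version would still be enough downstream through Lemma~\ref{lemma:L8bound}, since $s>1/2$, but it is not this lemma. Bernstein cannot turn $L^{8+}_{t,x}$ back into $L^8_{t,x}$, because $P_{N_i}$ localizes only in $x$; take $\phi(t)e^{ik\cdot x}$ with $\phi\in L^8\setminus L^{8+\delta}$. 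The missing ingredient is Lemma~\ref{lemma:st-int}. On a cube of side $K$, interpolate $\|P_Qu\|_{L^4}\lesssim K^{\varepsilon_1}\|u\|_{X^{0,b}}$, $b>\frac12$, with the crude bound $\|P_Qu\|_{L^4}\lesssim K^{1/2}\|P_Qu\|_{X^{0,\frac14+\varepsilon_1}}$. The crude bound comes from Hausdorff--Young and H\"older on the cube in $x$, plus unitarity and Sobolev in $t$. Both endpoints are $L^4$ estimates, so the exponent stays exactly $4$ and $\|P_Qu\|_{L^4}\lesssim K^{2\varepsilon_1}\|u\|_{X^{0,\frac12-\varepsilon_1}}$. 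The $K^{1/2}$ enters with a tiny interpolation weight, and $K=N_{(2)}$. Alternatively, you may keep $P_Qv\in L^{4-}$ and give the extra integrability to $P_{Q_1}u_{(1)}$. The $p>4$ case of \eqref{eq:strichartz} on cubes of side $N_{(2)}$ does this at a cost of $N_{(2)}^{0+}$.

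For \eqref{eq:Multilineal_Secondary}, your argument only covers the case where $u_1$ carries the top frequency. Suppose instead $u_2$ does. Then $\|\prod_{i\ge2}(P_{N_i}u_i)^{\iota_i}\|_{L^2_tL^1_x}$ contains the top factor. Measuring it in $L^8$ costs $N_{(1)}^{1/2}$. Putting it in $L^4_{t,x}$ with $u_3,u_4,u_5\in L^8$ breaks the $L^2_t$ H\"older count, since $\frac14+\frac38>\frac12$. So ``the same norms as before'' fails here, and this is exactly the case the paper treats separately. It uses $n_2-n_3+n_4-n_5=0$ to get $N_2\simeq N_3$ and $|n_2-n_3|\lesssim N_4$, and localizes $u_2,u_3$ to cubes of side $N_4$. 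It then combines $L^2_x$-orthogonality for $u_3$ with a square-function $L^4_t$ bound for $u_2$, and puts $P_Nh$ in $L^4$, where $N\simeq N_1\le N_{(2)}$. Within your own splitting there is a shorter fix. Unitarity of $e^{it\Delta}$ on $L^2_x$ and $H^{3/8}_t\subset L^8_t$ give $\|u_2\|_{L^8_tL^2_x}\lesssim\|u_2\|_{X^{0,3/8}}$. Hence $\|\prod_{i\ge2}\|_{L^2_tL^1_x}\le\|u_2\|_{L^8_tL^2_x}\prod_{i=3}^5\|u_i\|_{L^8_{t,x}}$. Also $\|u_1P_Nv\|_{L^2_tL^1_x}\le\|u_1\|_{L^4_tL^2_x}\|P_Nv\|_{L^4_tL^2_x}\lesssim\|u_1\|_{L^8_{t,x}}\|P_Nv\|_{X^{0,\frac14}}$, and $N^s\lesssim N_2^s$ because $N\simeq N_1$. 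The same $L^4_tL^2_x$ bound also handles the case where $u_1$ is on top, with no cubes and no loss.
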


With this, we can prove Proposition \ref{prop:Qsmoothing-simp} when the highest frequency component is a $D$, that is, whenever $u_{(i)} = D$ for some $N_{(i)} \simeq N_{(1)}$. 

\begin{prop}\label{prop:Qsmoothing-mest}
Let $s > 1/2$, and suppose that $u_{(1)} = D$. Let $\varepsilon > 0$. Then, with probability at least $1-\varepsilon$ and for any $\varepsilon_1 >0$ we have
    \begin{align}\label{eq:Qsmoothing-mest}
    &\sum_N N^{2s} \bigg( \sum_{\substack{N_{(1)}\geq \ldots\geq N_{(5)} \\ N_i \geq M_i}}  \|P_N \mathcal{Q}(P_{N_1} u_1 , P_{N_2} u_{2} , P_{N_3} u_{3} , P_{N_4} u_{4} , P_{N_5} u_{5})\|_{X^{0,b'-1}} \bigg)^2 \nonumber \\
    & \hspace{8cm} \lesssim \Big(  \log \frac{1}{\varepsilon} \Big)^{|\mathcal{R}|} \, \|D\|_{X^{s,b}}^{2 |\mathcal{D}|} \,\prod_{i \in \mathcal{R}} \frac{1}{ \langle M_i \rangle^{2\alpha - \varepsilon_1}}.
    \end{align}
\end{prop}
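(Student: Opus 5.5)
The plan is to feed the deterministic multilinear estimates of Lemma~\ref{lemma:Multilinear_Estimates} (both \eqref{eq:Multilineal_Main} and \eqref{eq:Multilineal_Secondary}, one for each term of $\mathcal Q$ in \eqref{eq:Q_Multilinear}) into the dyadic sum. We then bound every non-maximal factor in $L^8_{t,x}$, using Lemma~\ref{lemma:L8bound}: \eqref{eq:L8bound-1} when the factor is $D$, and \eqref{eq:Lp_bound-Prob} when it is $R$. The time localization is handled as usual. We first replace the $X^{s,b'-1}_\delta$ norms by global norms of extensions. For $D$ we use an extension nearly attaining $\|D\|_{X^{s,b}_\delta}$. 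For $R$ we use $\eta(t)e^{it\Delta}f^\omega$. The $L^p_{t,x}(\mathbb T\times\mathbb T^2)$ bounds for $R$ then hold after rescaling time.

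Fix the ordering $\gamma$, with $u_{(1)}=D$. Applying \eqref{eq:Multilineal_Main} to a single dyadic block gives
\begin{align}
\|P_N \mathcal Q(P_{N_1}u_1,\dots,P_{N_5}u_5)\|_{X^{s,b'-1}} \lesssim N_{(2)}^{\varepsilon_1}\|P_{N_{(1)}}D\|_{X^{s,b}}\prod_{j=2}^5 \|P_{N_{(j)}}u_{(j)}\|_{L^8_{t,x}}.
\end{align}
Note that \eqref{eq:Multilineal_Main} already contains the weight $N^{s}$, since it is stated in $X^{s,b'-1}$. For $j\ge2$ we bound the factors as follows.
\begin{itemize}
\item If $u_{(j)}=D$, then by \eqref{eq:L8bound-1} (with $\delta_1$ small so that $1/2+\delta_1\le s$) we get $\|P_{N_{(j)}}D\|_{L^8}\lesssim \|P_{N_{(j)}}D\|_{X^{1/2+\delta_1,b}}\lesssim N_{(j)}^{-(s-1/2-\delta_1)}\|D\|_{X^{s,b}}$. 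This is a negative power of $N_{(j)}$, since $s>1/2$. Strictly speaking we need $L^{8+\delta_1}$ and not $L^8$, but on the compact torus Hölder covers this.
\item If $u_{(j)}=P_{\ge M_j}R$, then on an event of probability $\ge 1-\varepsilon/5$ we get $\|P_{N_{(j)}}R\|_{L^8}\lesssim (\log\frac1\varepsilon)^{1/2}(\log N_{(j)})^{1/2}N_{(j)}^{-\alpha}$, uniformly in $N_{(j)}$. Moreover $N_{(j)}\ge M_j$.
\end{itemize}
Intersecting these events, we obtain a set $A_\varepsilon$ of probability $\ge1-\varepsilon$ on which all the bounds hold simultaneously for every dyadic scale.

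Next comes the summation. Write $\kappa_j=\alpha$ for $R$-factors and $\kappa_j=s-1/2-\delta_1$ for $D$-factors; all of these are positive. The factor $N_{(2)}^{\varepsilon_1}$ is absorbed by choosing $\varepsilon_1$ smaller than $\min\kappa_j/4$, together with $(\log N)^{1/2}\lesssim_{\varepsilon_1} N^{\varepsilon_1}$. Each sum over $N_{(j)}$ with $j\ge2$, restricted to $N_{(j)}\ge M_j$ and carrying a decaying power, then converges and contributes $\langle M_j\rangle^{-\alpha+2\varepsilon_1}$ for $R$-factors and $O(\|D\|_{X^{s,b}})$ for $D$-factors. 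Squaring produces the factors $\langle M_i\rangle^{-(2\alpha-\varepsilon_1)}$ and the power $(\log\frac1\varepsilon)^{|\mathcal R|}$.

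It remains to handle the $N$ and $N_{(1)}$ sums. Since $P_N$ of a product of frequencies at most $N_{(1)}$ forces $N\lesssim N_{(1)}$, we split into two regimes.
\begin{itemize}
\item When $N\simeq N_{(1)}$, for each $N$ the sum over $N_{(1)}$ has $O(1)$ terms. Summing in $N$ then gives $\sum_{N_{(1)}}\|P_{N_{(1)}}D\|_{X^{s,b}}^2=\|D\|_{X^{s,b}}^2$ by orthogonality.
\item When $N\ll N_{(1)}$, we need $N_{(2)}\simeq N_{(1)}$. The second-largest factor then provides decay $N_{(1)}^{-\kappa_2}$, and Cauchy--Schwarz in $N_{(1)}$ recovers the $\ell^2$ sum. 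Since the weight $N^s$ is already built into \eqref{eq:Multilineal_Main}, we can treat the sum over $N$ with the leftover decay $N_{(1)}^{-\kappa_2+\varepsilon_1}$ as geometric in $\log N_{(1)}$.
\end{itemize}

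The secondary term of $\mathcal Q$ is handled identically via \eqref{eq:Multilineal_Secondary}. If $u_{(1)}=u_1$, or if the top frequency appears inside the integral, the same right-hand side arises, so the bookkeeping is the same.

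I expect the main obstacle to be this last summation when $N\ll N_{(1)}$. We must make sure the $\ell^2_N$ structure is compatible with the $\ell^1$ sum in $N_{(1)}$, without losing a logarithm. What I would try first is to use that the decay of the second factor beats $N_{(2)}^{\varepsilon_1}$ by a fixed power. If that falls short, a cruder fallback is to put all dyadic sums in $\ell^1$ and use $\|P_{N_{(1)}}D\|_{X^{s,b}}\le\|D\|_{X^{s,b}}$, paying with a tiny power $N_{(1)}^{-\kappa_2/2}$ from the second-largest factor.
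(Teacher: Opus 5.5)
Your proposal is correct and follows essentially the paper's proof: you apply Lemma~\ref{lemma:Multilinear_Estimates}, bound the lower-frequency factors in $L^8$ via Lemma~\ref{lemma:L8bound}, and split into $N\simeq N_{(1)}$ (Cauchy--Schwarz in $N_{(1)}$) and $N\ll N_{(1)}$ (where $N_{(2)}\simeq N_{(1)}$ supplies the needed decay). The only difference is cosmetic: the paper uses the $s=0$ version of the estimate, sums $N^{2s}/N_{(1)}^{2s}$ over $N\ll N_{(1)}$, and gets an extra $N_{(1)}^{-2\varepsilon_1}$ from weighted Cauchy--Schwarz, whereas you work at level $s$ and absorb the $O(\log N_{(1)})$ output scales using the decay of the second-largest factor, which does close (including the full $\langle M_i\rangle$ decay once you swap the order of the $N$ and $N_{(1)}$ sums).
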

\begin{proof}
Let $\varepsilon_1 > 0$ small enough. From Lemma \ref{lemma:Multilinear_Estimates}, we have
\begin{align}\label{eq:DNlarge-0}
    & \sum_{\substack{N_{(1)}\geq \ldots\geq N_{(5)} \\ N_i \geq M_i}}  \|P_N \mathcal{Q}(P_{N_1} u_1 ,  \ldots, P_{N_5} u_{5})\|_{X^{0,b'-1}}
    \lesssim \sum_{\substack{N_{(1)}\geq \ldots\geq N_{(5)} \\ N_i \geq M_i}} N_{(2)}^{\varepsilon_1} \|P_{N_{(1)}} D \|_{X^{0,b}} \prod_{j \neq 1} \|P_{N_{(j)} } u_{(j)} \|_{L^8_{x,t}}, \qquad 
\end{align}
and we separate the sum into the cases $N_{(1)} \simeq N$ and $N_{(1)} \gg N$. 

\hfill \break
\textbf{Case 1.} 
When $N_{(1)} \simeq N$, by Cauchy-Schwarz in $N_{(1)}$ we have that
\begin{align}\label{eq:DNlarge-1}
    N^{2s} & \bigg( \sum_{\substack{ N \simeq N_{(1)} \geq \ldots \geq N_{(5)} \\ N_i \geq M_i }} N_{(2)}^{\varepsilon_1} \|P_{N_{(1)}} D \|_{X^{0,b}} \prod_{j \neq 1} \|P_{N_{(j)}} u_{(j)} \|_{L^8_{x,t}} \bigg)^2 \nonumber \\
    & \lesssim 
    \bigg(\sum_{N_{(1)} \simeq N} \frac{N^{2s}}{N_{(1)}^{2s}} \|P_{N_{(1)}} D\|_{X^{s,b}}^2\bigg) \, 
    \bigg( \sum_{N_{(1)} \simeq N} \Big| \sum_{\substack{N_{(2)}\geq \ldots\geq N_{(5)} \\ N_i \geq M_i}} N_{(2)}^{\varepsilon_1} \prod_{j \neq 1} \|P_{N_{(j)}} u_{(j)} \|_{L^8_{x,t}} \Big|^2 \bigg). 
\end{align}
Using Cauchy-Schwarz in a similar way in the remaining frequencies, by Lemma \ref{lemma:L8bound} we get
\begin{align}
    &\sum_{N_{(1)} \simeq N} \Big| \sum_{\substack{N_{(2)}\geq \ldots\geq N_{(5)} \\ N_i \geq M_i}} N_{(2)}^{\varepsilon_1} \prod_{j \neq 1} \|P_{N_{(j)}} u_{(j)} \|_{L^8_{x,t}} \Big|^2 \\
    & \qquad \qquad \lesssim 
    \prod_{j \neq 1} \Big( \sum_{\substack{N_{(j)} \\ N_i \geq M_i }} N_{(j)}^{4\varepsilon_1} \|P_{N_{(j)}}u_{(j)}\|_{L^8_{x,t}}^2 \Big) \,  \Big( \sum_{N_{(1)} \simeq N}\sum_{N_{(2)} \geq \ldots \geq N_{(5)}} \frac{1}{N_{(2)}^{2\varepsilon_1} N_{(3)}^{2\varepsilon_1} N_{(4)}^{2\varepsilon_1} N_{(5)}^{2\varepsilon_1}} \Big) \\
    & \qquad \qquad \lesssim \|D\|_{X^{s,b}}^{2(|\mathcal{D}| - 1)} 
    \prod_{i \in \mathcal R} \Big(\sum_{N_i \geq M_i } \frac{\log (1/\varepsilon)}{N_i^{2\alpha - 5\varepsilon_1}} \Big),
\end{align}
with probability $\geq 1 - \varepsilon$, since for each $N$, the number of $N_{(1)} \simeq N$ is finite and independent of $N$. 
Therefore, from \eqref{eq:DNlarge-1} we get
\begin{align}
    & \sum_N N^{2s} \Big( \sum_{\substack{N \simeq N_{(1)} \geq \ldots \geq N_{(5)} \\ N_i \geq M_i }} N_{(2)}^{\varepsilon_1} \|P_{N_{(1)}} D \|_{X^{0,b}} \prod_{j \neq 1} \|P_{N_{(j)}} u_{(j)}\|_{L^8_{x,t}} \Big)^2 \\
    & \qquad \qquad \lesssim \Big(  \log \frac{1}{\varepsilon} \Big)^{|\mathcal{R}|} \|D\|_{X^{s,b}}^{2(|\mathcal{D}| - 1)} \prod_{i \in \mathcal{R}} \frac{1}{ \langle M_i \rangle^{2\alpha - 5\varepsilon_1}}  \sum_{N_{(1)}} \Big( \|P_{N_{(1)}} D\|_{X^{s,b}}^2  \sum_{N:N \simeq N_{(1)}} 1 \Big) \\
    & \qquad \qquad \lesssim \Big(  \log \frac{1}{\varepsilon} \Big)^{|\mathcal{R}|} \|D\|_{X^{s,b}}^{2|\mathcal{D}|} \prod_{i \in \mathcal{R}} \frac{1}{\langle M_i \rangle^{2\alpha - 5\varepsilon_1}}.
\end{align}

\textbf{Case 2.}
When $N_{(1)} \gg N$, we have $N_{(2)} \simeq N_{(1)}$. By Cauchy-Schwarz in $N_{(1)}$,  we bound \eqref{eq:DNlarge-0} by
\begin{align}\label{eq:DNlarge-2}
    & \bigg( \sum_{\substack{N_{(1)} \geq \ldots \geq N_{(5)} \\ N_{(1)} \gg N, \, \, N_i \geq M_i}} N_{(2)}^{\varepsilon_1} \|P_{N_{(1)}} D \|_{X^{0,b}} \prod_{i \neq 1} \|P_{N_{(i)} u_{(i)}} \|_{L^8_{x,t}} \bigg)^2 \nonumber  \\
    & \hspace{1cm}
    \lesssim \Big( \sum_{N_{(1)}} \|P_{N_{(1)}} D\|_{X^{s,b}}^2 \Big) \,
    \Big( \sum_{N_{(1)} \gg N} \frac{1}{N_{(1)}^{2s}}  \Big| \sum_{\substack{  N_{(2)} \geq \ldots \geq N_{(5)} \\ N_{(2)} \simeq N_{(1)}, \, \,  N_i \geq M_i  }} N_{(2)}^{\varepsilon_1} \prod_{i \neq 1} \|P_{N_{(i)}} u_{(i)} \|_{L^8_{x,t}} \Big|^2 \Big).
\end{align}
Again, applying Cauchy-Schwarz in the rest of $N_{(j)}$, by Lemma \ref{lemma:L8bound} we get
\begin{align}\label{eq:After_Multilineal_Case2}
    & \sum_{N_{(1)} \gg N} 
    \frac{1}{N_{(1)}^{2s}}  \Big| \sum_{\substack{ N_{(2)} \geq \ldots \geq N_{(5)} \\ N_{(2)} \simeq N_{(1)}, \, \, N_i \geq M_i}} N_{(2)}^{\varepsilon_1} \prod_{i \neq 1} \|P_{N_{(i)}} u_{(i)} \|_{L^8_{x,t}} \Big|^2 \nonumber \\
    & \hspace{2cm}
    \lesssim \prod_{j \neq 1} \Big( \sum_{N_{(j)} \, : \, N_i \geq M_i} N_{(j)}^{4\varepsilon_1} \lVert P_{N_{(j)}} u_{(j)} \rVert_{L_{x,t}^8}^2 \Big) \, \cdot \, \sum_{N_{(1)} \gg N}   \sum_{\substack{ N_{(2)} \geq \ldots \geq N_{(5)} \\ N_{(2)} \simeq N_{(1)}}}\frac{1}{N_{(1)}^{2s} (N_{(2)} \ldots N_{(5)} )^{2\varepsilon_1}} \nonumber
    \\
    & \hspace{2cm} \lesssim \|D\|_{X^{s,b}}^{2(|\mathcal{D}| - 1)} \prod_{j\in \mathcal R} \frac{  \log (1/\varepsilon)}{\langle M_j \rangle^{2\alpha - 5\varepsilon_1}}  \sum_{ N_{(1)} \gg N}  \frac{1}{N_{(1)}^{2s + 2\varepsilon_1}}, 
\end{align}
with probability $\geq 1 - \varepsilon$. 
From \eqref{eq:DNlarge-2} and \eqref{eq:After_Multilineal_Case2} we get
\begin{align}
    & \sum_N N^{2s} \Big( 
    \sum_{\substack{N_{(1)} \geq \ldots \geq N_{(5)}  \\  N_{(1)} \gg N, \, \, N_i \geq M_i} } 
    N_{(2)}^{\varepsilon} \|P_{N_{(1)}} D \|_{X^{0,b}} \prod_{i \neq 1} \|P_{N_{(i)} u_{(i)}} \|_{L^8_{x,t}} \Big)^2 \\
    & \hspace{3cm}
    \lesssim \|D\|_{X^{s,b}}^{2|\mathcal{D}|} \prod_{i \in \mathcal{R}} \frac{\log (1/\varepsilon)}{ \langle M_i \rangle^{2 \alpha - 5\varepsilon_1}} \, 
    \sum_N N^{2s} \sum_{ N_{(1)} \gg N}  \frac{1}{N_{(1)}^{2s + 2\varepsilon_1}},
\end{align}
which is what we wanted
since $\sum_{N \ll N_{(1)}} N^{2s} \lesssim N_{(1)}^{2s}$. 
\end{proof}

We now prove the multilinear estimates in Lemma~\ref{lemma:Multilinear_Estimates}.
\begin{proof}[Proof of Lemma~\ref{lemma:Multilinear_Estimates}]
    We first prove \eqref{eq:Multilineal_Main}. By duality, write 
    \begin{equation}\label{eq:dual-norm}
        \bigg\lVert P_N \bigg(
        \prod_{i=1}^5 (P_{N_{i}} u_i)^{\iota_i} \bigg)
        \bigg\rVert_{X^{s, b'-1}} 
        = \sup_{\lVert h \rVert_{X^{-s, -b'+1}} = 1} 
        \int \overline{h} \, P_N \bigg( \prod_{i=1}^5 (P_{N_{i}} u_i)^{\iota_i} \bigg).
    \end{equation}  
    Since the product $\prod_{i=1}^5 (P_{N_{i}} u_i)^{\iota_i}$ is Fourier supported in $|n| \lesssim N_{(1)}$, 
    we have $N \simeq |n| \lesssim N_{(1)}$. 
    On the other hand,
    divide the ball $|n| \leq N_{(1)}$ into finitely overlapping cubes $Q$ of side length $N_{(2)}$,  
    in such a way that using Parseval and H\"older we get
    \begin{align}
        \Big| \int \overline{P_N h} \, \cdot \, \prod_{i=1}^5 (P_{N_i} u_i)^{\iota_i} \Big|
        & = \Big| \sum_{Q, Q_1} 
        \int \overline{P_Q P_N h} \cdot 
        (P_{Q_1}P_{N_{(1)}} u_{(1)})^{\iota_{(1)}}
        \cdot \prod_{i=2}^5 (P_{N_{(i)}} u_{(i)})^{\iota_{(i)}} \Big| \\
        & \leq 
        \sum_{Q, Q_1}
        \lVert P_Q P_N h \rVert_{L^4_{x,t}} \,
        \big\lVert  P_{Q_1} P_{N_{(1)}} u_{(1)} \big\rVert_{L^4_{x,t}} \, 
        \prod_{i=2}^5 \big\lVert  P_{N_{(i)}} u_{(i)} \big\rVert_{L^8_{x,t}},
    \end{align} 
    where the sum in $Q,Q_1$ is restricted by the condition $\dist(Q, Q_1) \lesssim N_{(2)}$ due to $|n-n_1| \lesssim N_{(2)}$. 
    By Strichartz estimates, the transference principle from Lemma \ref{lemma:TransferLemma}
    and Lemma \ref{lemma:st-int}, we have 
    \begin{align}\label{eq:Strichartz_Localized}
        \|P_Q F\|_{L^4_{x,t}}
        \lesssim 
        N_{(2)}^{2\varepsilon_1} \, \| P_Q F \|_{X^{0, \frac{1}{2} - \varepsilon_1}},
        \qquad \forall Q \text{ of side length } N_{(2)}, 
    \end{align}
    which implies that
    \begin{align}\label{eq:Multilinear_Proof_Xsb}
        \lVert  P_Q P_N  h \rVert_{L^4_{x,t}}\, 
        \big\lVert  P_{Q_1} P_{N_{(1)}} u_{(1)} \big\rVert_{L^4_{x,t}}
        & \lesssim 
        N_{(2)}^{4\varepsilon_1} \, 
        \lVert P_Q P_N  h \rVert_{X^{0, \frac{1}{2} - \varepsilon_1}} \, 
        \big\lVert  P_{Q_1} P_{N_{(1)}} u_{(1)} \big\rVert_{X^{0, \frac{1}{2} - \varepsilon_1}}. 
    \end{align}
Hence, by Cauchy-Schwartz and the fact that $N \lesssim N_{(1)}$, we get  
    \begin{align}\label{eq:Multilinear_Proof_Box_Localization}
        & \sum_{Q, Q_1} \lVert P_Q P_N  h \rVert_{X^{0, \frac{1}{2} - \varepsilon_1}} \, 
        \big\lVert  P_{Q_1} P_{N_{(1)}} u_{(1)} \big\rVert_{X^{0, \frac{1}{2} - \varepsilon_1}} \nonumber \\
        & \hspace{3cm}
        \leq 
        \Big( \sum_{Q, Q_1 } \lVert P_Q P_N  h \rVert_{X^{0, \frac{1}{2} - \varepsilon_1}}^2
        \Big)^{1/2}
        \, 
        \Big( \sum_{Q, Q_1} \lVert P_{Q_1} P_{N_{(1)}}  u_{(1)} \rVert_{X^{0, \frac{1}{2} - \varepsilon_1}}^2
        \Big)^{1/2} \nonumber \\
        & \hspace{3cm}
        \lesssim 
        \lVert P_N  h \rVert_{X^{0, \frac{1}{2} - \varepsilon_1}} \, 
        \lVert P_{N_{(1)}}  u_{(1)} \rVert_{X^{0, \frac{1}{2} - \varepsilon_1}} \nonumber \\
        & \hspace{3cm}
        \lesssim 
        \lVert P_N  h \rVert_{X^{-s, \frac{1}{2} - \varepsilon_1}} \, 
        \lVert P_{N_{(1)}}  u_{(1)} \rVert_{X^{s, \frac{1}{2} - \varepsilon_1}}, 
    \end{align}
    since for every fixed $Q$, the number of $Q_1$ in the sum is uniformly bounded. 
    Therefore, given that $-b' + 1 = 1/2 - \beta'$, for $\beta'$ small enough we have $\beta' \leq \varepsilon_1$, so
    \begin{align}
        \Big|\int \overline{P_N h} \, \cdot \, \prod_{i=1}^5 
        (P_{N_i} u_i)^{\iota_i} \Big|
        \lesssim
        N_{(2)}^{4\varepsilon_1} \,  
        \lVert P_{N_{(1)}}  u_{(1)} \rVert_{X^{s, \frac{1}{2} - \varepsilon_1}} \, 
        \prod_{i=2}^5 \big\lVert  P_{N_{(i)}} u_{(i)} \big\rVert_{L^8_{x,t}}.
    \end{align}
To prove \eqref{eq:Multilineal_Secondary}, we adapt this duality argument. Notice first that $N \simeq N_1$. 
We separate cases:
\hfill \break

\textbf{Case 1.} 
If $N_{(1)} = N_1$, we proceed as above. By duality with $\lVert h \rVert_{X^{-s, -b'+1}} \leq 1$, the box localization with cubes $Q$ of size $N_{(2)}$, 
Parseval and Cauchy-Schwarz in $t$, we bound
\begin{align}\label{eq:int2-Multilinear_Proof_Box_Localization}
    \Big| \int_{x,t} \overline{P_N h} \,  P_{N_1} u_1 \, \bigg(\int_x \prod_{i=2}^5 (P_{N_i} u_i)^{\iota_i} \bigg) \, dx\, dt \Big|
    & = \Big| \sum_{Q} \int_t \bigg( \int_x \overline{P_Q P_N h} \cdot  P_Q P_{N_1} u_1 \bigg) \,
    \bigg(\int_x \prod_{i=2}^5 (P_{N_i} u_i)^{\iota_i} \bigg) \, dt \Big| \nonumber
    \\
    & \leq \sum_{Q} 
    \bigg\lVert \int_x \overline{P_Q P_N h} \cdot P_Q P_{N_1} u_1\bigg\rVert_{L^2_t} 
    \, \Big\lVert \int_x \prod_{i=2}^5 (P_{N_i} u_i)^{\iota_i} \Big\rVert_{L^2_t} \nonumber \\
    & \leq 
    \sum_{Q} \big\lVert \overline{P_Q P_N h} \cdot P_Q P_{N_1} u_1 \big\rVert_{L^2_{x,t}} 
    \, \Big\lVert \prod_{i=2}^5 (P_{N_i} u_i)^{\iota_i} \Big\rVert_{L^2_{x,t}}, 
\end{align}
where we trivially bounded the $L^p_x$ norms. 
Now, using H\"older in $(x,t)$, and then \eqref{eq:Multilinear_Proof_Xsb} and arguing like in \eqref{eq:Multilinear_Proof_Box_Localization} because $N \simeq N_1$, we get 
\begin{align}
    \eqref{eq:int2-Multilinear_Proof_Box_Localization} & \leq 
    \sum_{Q} 
    \lVert P_Q P_N h  \rVert_{L^4_{x,t}} \, 
    \lVert  P_Q P_{N_1} u_1 \rVert_{L^4_{x,t}} 
    \, \prod_{i=2}^5 
    \big\lVert  P_{N_i} u_i \big\rVert_{L^8_{x,t}} \\
    & \leq 
    N_{(2)}^{4\varepsilon_1} \, \sum_{Q} 
    \lVert P_Q P_N h  \rVert_{X^{0,\frac{1}{2}-\varepsilon_1}} \, 
    \lVert  P_Q P_{N_1} u_1 \rVert_{X^{0,\frac{1}{2}-\varepsilon_1}} 
    \, \prod_{i=2}^5 
    \big\lVert  P_{N_i} u_i \big\rVert_{L^8_{x,t}} \\
    & \lesssim 
    N_{(2)}^{4\varepsilon_1}  \,
    \lVert  P_N h \rVert_{X^{-s, \frac{1}{2} - \varepsilon_1}} \, 
    \lVert  P_{N_1} u_1 \rVert_{X^{s, \frac{1}{2} - \varepsilon_1}} \, 
    \prod_{i=2}^5 
    \big\lVert  P_{N_i} u_i \big\rVert_{L^8_{x,t}}.
\end{align}

\textbf{Case 2. }
If $N_{(1)} = N_i$ for some $i \neq 1$, 
    assume without loss of generality that $N_{(1)} = N_2 \geq N_3 \geq N_4 \geq N_5$. 
    Observe that since $n_2 - n_3 + n_4 - n_5 = 0$, we have $N_2 \simeq N_3$, 
    and also $|n_2 - n_3| \lesssim N_4$. 
    By duality, and box localizing in cubes $Q_2$ and $Q_3$ of size $N_4$, 
    we bound
    \begin{align}\label{eq:N_2Largest}
        & \Big|\int_{x,t} \overline{P_{N_1}P_N h} \,  P_{N_1} u_1 \, \bigg(\int \prod_{i=2}^5 (P_{N_i} u_i)^{\iota_i} \bigg) \, dx\, dt \Big| \nonumber \\
        & \hspace{1cm} = 
        \Big|\int_t \bigg( \int_x \overline{P_{N_1}P_N h} \cdot  P_{N_1} u_1 \bigg)\, \bigg(\int_x \prod_{i=2}^5 (P_{N_i} u_i)^{\iota_i} \bigg) \, dt \Big|   \\
        & \hspace{1cm} = 
        \Big| \int_t \bigg( \int_x \overline{P_{N_1}P_N h} \cdot  P_{N_1} u_1 \bigg)\, \bigg( \sum_{Q_2, Q_3}\int_x \overline{P_{Q_2}P_{N_2} u_2} \cdot P_{Q_3}P_{N_3} u_3  \cdot \overline{P_{N_4} u_4} \cdot P_{N_5} u_5\bigg) \, dt \Big|  \nonumber \\
        & \hspace{1cm} \leq 
        \sum_{Q_2, Q_3}
        \int_t \lVert P_{N_1}P_N h \rVert_{L_x^2}  \, \lVert P_{N_1} u \rVert_{L_x^2}  
        \lVert P_{Q_2} P_{N_2} u_2 \rVert_{L_x^4} \, 
        \lVert P_{Q_3}P_{N_3} u_3 \rVert_{L_x^2} \, 
        \lVert P_{N_4} u_4 \rVert_{L_x^8} \, 
        \lVert P_{N_5} u_5 \rVert_{L_x^8} \, dt. \nonumber
    \end{align}
    By Cauchy-Schwarz and by the condition $\dist(Q_2, Q_3) \lesssim N_4$ we have
    \begin{align}
        \sum_{Q_2, Q_3}
        \lVert P_{Q_2} P_{N_2} u_2 \rVert_{L_x^4} \, 
        \lVert P_{Q_3}P_{N_3} u_3 \rVert_{L_x^2} 
        & \leq 
        \Big( \sum_{Q_2, Q_3}
        \lVert P_{Q_2} P_{N_2} u_2 \rVert_{L_x^4}^2 \Big)^{1/2} \, 
        \Big( \sum_{Q_2, Q_3}
        \lVert P_{Q_3}P_{N_3} u_3 \rVert_{L_x^2}^2 \Big)^{1/2} \\
        & \lesssim 
        \Big( \sum_{Q_2}
        \lVert P_{Q_2} P_{N_2} u_2 \rVert_{L_x^4}^2 \Big)^{1/2} \, 
        \lVert P_{N_3} u_3 \rVert_{L_x^2},
    \end{align}
    and thus, bounding $L^p$ norms trivially and using H\"older, we get
    \begin{align}\label{eq:Multilinear_Almost_Last_Step}
        \eqref{eq:N_2Largest} 
        & \lesssim 
        \int_t \lVert P_{N_1}P_N h \rVert_{L_x^2} \, 
        \prod_{i \neq 2} \lVert P_{N_i} u_i \rVert_{L_x^8} \, 
        \Big( \sum_{Q_2}
        \lVert P_{Q_2} P_{N_2} u_2 \rVert_{L_x^4}^2 \Big)^{1/2} \, dt \nonumber \\
        & \leq 
        \lVert P_{N_1}P_N h \rVert_{L_{x,t}^4} \, 
        \bigg\lVert \Big( \sum_{Q_2}
        \lVert P_{Q_2} P_{N_2} u_2 \rVert_{L_x^4}^2 \Big)^{1/2} \bigg\rVert_{L_t^4} \, 
        \prod_{i \neq 2} \lVert P_{N_i} u_i \rVert_{L_{x,t}^8}.
    \end{align}
    Now, 
    \begin{align}
        \bigg\lVert \Big( \sum_{Q_2}
        \lVert P_{Q_2} P_{N_2} u_2 \rVert_{L_x^4}^2 \Big)^{1/2} \bigg\rVert_{L_t^4}^4
        & = \sum_{Q_2, Q_2'} 
        \int_t 
        \lVert P_{Q_2} P_{N_2} u_2 \rVert_{L_x^4}^2 \,
        \lVert P_{Q_2'} P_{N_2} u_2 \rVert_{L_x^4}^2 \, dt \\
        & \leq \sum_{Q_2, Q_2'}
        \lVert P_{Q_2} P_{N_2} u_2 \rVert_{L_{x,t}^4}^2 \, 
        \lVert P_{Q_2'} P_{N_2} u_2 \rVert_{L_{x,t}^4}^2 \\
        & \lesssim N_4^{8\varepsilon_1} 
        \Big( \sum_{Q_2}
        \lVert P_{Q_2} P_{N_2} u_2 \rVert_{X^{0,\frac{1}{2}-\varepsilon_1}}^2 
        \Big)^2 \\
        & \simeq 
        N_4^{8\varepsilon_1}  
        \lVert P_{N_2} u_2 \rVert_{X^{0,\frac{1}{2}-\varepsilon_1}}^4, 
    \end{align}
    where we used \eqref{eq:Strichartz_Localized}. Hence, since $N_1 \leq N_2$ and $N_1, N_4 \leq N_{(2)}$, by using \eqref{eq:Strichartz_Localized} again we get
    \begin{align}
        \eqref{eq:Multilinear_Almost_Last_Step}
        & \lesssim 
        N_1^{2\varepsilon_1} \, N_4^{2\varepsilon_1} 
        \lVert P_{N_1}P_N h \rVert_{X^{0,\frac{1}{2}-\varepsilon_1}} \, 
        \lVert P_{N_2} u_2 \rVert_{X^{0,\frac{1}{2}-\varepsilon_1}}\, 
        \prod_{i \neq 2} \lVert P_{N_i} u_i \rVert_{L_{x,t}^8} \\
        & \leq 
        N_{(2)}^{4\varepsilon_1} \, 
        \lVert P_{N_2} u_2 \rVert_{X^{s,\frac{1}{2}-\varepsilon_1}}\, 
        \prod_{i \neq 2} \lVert P_{N_i} u_i \rVert_{L_{x,t}^8}.\qedhere
    \end{align}
\end{proof}

\subsection{Reduction to the paraboloid} \label{subsec:red-par} 
Thanks to Proposition~\ref{prop:Qsmoothing-mest}, we can assume that all highest frequency components are $R$. 
We first deal with the case in which the Fourier support is far from the paraboloid $\tau + |n|^2 = 0$. 
For that, we have the following lemma, adapted from \cite{CLS2020}. 
\begin{lem}\label{lemma:reduction-2}
Suppose that $u_{(1)} = R$.
Let $\beta > 0$ and $c \geq 4\beta/(1 - 4\beta)$. 
Let $\varepsilon > 0$. 
Then, the following is true with probability $\geq 1 - \varepsilon$. 
For every $N \in 2^{\mathbb N}$, and for every $\delta_1 > 0$ and $\varepsilon_1 > 0$,
\begin{align}
\Big\| 
P_{\left\{ \langle \tau + |n|^2 \rangle > N^{1+c} \right\}} 
P_N  
\Big( 
\prod_{j=1}^5
(P_{N_j} u_j)^{\iota_j}
\Big) 
\Big\|_{X^{0,-\frac{1}{2}+\beta}}
\lesssim 
\frac{(\log (1/\varepsilon))^{1/2}}{N^{\frac{1}{2}+\frac{c}{4}} \, N_{(1)}^{\alpha - \varepsilon_1}} 
\prod_{j=2}^5 \lVert P_{N_{(j)}} u_{(j)} \rVert_{L^{8+\delta_1}}
\end{align}
and 
\begin{align}
    \bigg\| 
    P_{\left\{ \langle \tau + |n|^2 \rangle > N^{1+c} \right\}} 
    P_N  
    \Big( 
    P_{N_1} u_1 \, 
    \int 
    \prod_{j=2}^5
    (P_{N_j} u_j)^{\iota_j}
    \Big) 
    \bigg\|_{X^{0,-\frac{1}{2}+\beta}}
    \lesssim 
    \frac{(\log 1/\varepsilon)^{1/2}}{N^{\frac12 + \frac{c}{4}} \, N_{(1)}^{\alpha - \varepsilon_1}} \, 
    \prod_{j=2}^5
    \lVert P_{N_{(j)}} u_{(j)}
    \rVert_{L^{8+\delta_1}_{x,t}}.
\end{align}
\end{lem}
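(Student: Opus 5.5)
The key observation is that on the region $\langle \tau+|n|^2\rangle > N^{1+c}$ the weight $\langle\tau+|n|^2\rangle^{-\frac12+\beta}$ is small. Trading it for an $L^2$ norm gains a negative power of $N$, and the remaining loss can be paid by the decay of the highest-frequency random factor $u_{(1)}=R$ in $L^p$ from Lemma~\ref{lemma:L8bound}. Concretely, for the first estimate I would write
\begin{align}
\Big\| P_{\{\langle \tau+|n|^2\rangle>N^{1+c}\}} P_N F\Big\|_{X^{0,-\frac12+\beta}}
\le N^{-(1+c)(\frac12-\beta)}\, \|P_N F\|_{L^2_{t,x}},
\qquad F=\prod_{j=1}^5 (P_{N_j}u_j)^{\iota_j},
\end{align}
where, after inserting the cutoff $\eta$, all norms are taken on $\mathbb T\times\mathbb T^2$. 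The condition $c\ge 4\beta/(1-4\beta)$ is equivalent to $(1+c)(\frac12-\beta)\ge \frac12+\frac c4$, so the prefactor is at most $N^{-\frac12-\frac c4}$. This is exactly the power appearing in the statement.

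It then remains to bound $\|F\|_{L^2_{t,x}}$ by Hölder. I would put $P_{N_{(1)}}R$ in $L^{q}_{t,x}$ with $q$ very large and the other four factors in $L^{8+\delta_1}_{t,x}$, using
\[
\tfrac1q+\tfrac{4}{8+\delta_1}=\tfrac12, \qquad\text{i.e.}\qquad q=\tfrac{2(8+\delta_1)}{\delta_1}<\infty .
\]
Lemma~\ref{lemma:L8bound} then gives, with probability $\ge1-\varepsilon$ and uniformly in $N_{(1)}$,
\[
\|P_{N_{(1)}}R\|_{L^q_{t,x}}\lesssim (\log(1/\varepsilon))^{1/2}(\log N_{(1)})^{1/2}N_{(1)}^{-\alpha}\lesssim (\log(1/\varepsilon))^{1/2}N_{(1)}^{-\alpha+\varepsilon_1}.
\]
Since $q$ depends on $\delta_1$, I would take the exceptional set as a countable union over $\delta_1=1/k$, with probabilities $\varepsilon 2^{-k}$. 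Because $\log(2^k/\varepsilon)\lesssim k\log(1/\varepsilon)$, this only changes constants depending on $\delta_1$. This gives the first inequality with a single event of probability $\ge1-\varepsilon$ valid for all $N$.

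For the second inequality the same first step reduces matters to
\[
\Big\|P_{N_1}u_1\int_x\prod_{j=2}^5(P_{N_j}u_j)^{\iota_j}\,dx\Big\|_{L^2_{t,x}}
\le \Big\| \|P_{N_1}u_1\|_{L^2_x}\,\Big\|\prod_{j\ge2}P_{N_j}u_j\Big\|_{L^1_x}\Big\|_{L^2_t}.
\]
On $\mathbb T^2$ the norms are nested, $\|\cdot\|_{L^a_x}\le\|\cdot\|_{L^b_x}$ for $a\le b$. So I can bound each spatial factor by a higher $L^p_x$ norm, and then apply Hölder in $t$ exactly as before, again placing the $R$-factor at frequency $N_{(1)}$ in $L^q$ and the other four factors in $L^{8+\delta_1}$. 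This factor may be $u_1$ or one of $u_2,\dots,u_5$; Hölder does not care which.

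The main subtlety is bookkeeping rather than analysis. The possibly deterministic factors $D$ must appear only through their $L^{8+\delta_1}$ norms, which are finite by \eqref{eq:L8bound-1}. The exponent $q$ must remain finite so that Lemma~\ref{lemma:L8bound} applies. The probability event must be chosen uniformly in $N,N_j$ and $\delta_1$, which is handled by the uniform-in-$N$ form of \eqref{eq:Lp_bound-Prob} together with the countable union over $\delta_1$.
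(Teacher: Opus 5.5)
Your proposal is correct and follows the paper's proof essentially step for step. You trade the weight $\langle\tau+|n|^2\rangle^{-1/2+\beta}$ for $N^{-1/2-c/4}$ using the equivalence $(1+c)(1-2\beta)\ge 1+c/2$, then apply H\"older with the $R$-factor at frequency $N_{(1)}$ in $L^{2(8+\delta_1)/\delta_1}$ and invoke Lemma~\ref{lemma:L8bound}; for the integral term you make the same reduction via trivially nested spatial norms and H\"older in time. Your explicit countable-union argument, which makes the exceptional set uniform in $\delta_1$, is a small extra precision that the paper leaves implicit.
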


This lemma allows us to bound \eqref{eq:decomp-lp} when frequencies $(n,\tau)$ are far from the paraboloid.
We now prove this and postpone the proof of Lemma~\ref{lemma:reduction-2} to the end of the section.

\begin{prop}\label{prop:Qsmoothing-par}
    Let $s > 1/2$ and $c > 4(s - 1/2)$.
    Suppose $u_{(1)} = R$. 
    Then, for any $\varepsilon > 0$,
    the following is true with probability $\geq 1 - \varepsilon$. 
    For any $\varepsilon_1 > 0$, 
    \begin{align*}
        \sum_N
    N^{2s} 
    &\Big( 
    \sum_{\substack{N_{(1)} \geq \ldots\geq N_{(5)} \\ N_i \geq M_i  }} \lVert P_N P_{\langle \tau + |n|^2 \rangle > N^{1 + c}} \mathcal{Q}(P_{N_{1}}u_{1}, P_{N_{2}}u_{2}, P_{N_{3}}u_{3}, P_{N_{4}}u_{4}, P_{N_{5}}u_{5}) \rVert_{X^{0,b'-1}_\delta} \Big)^2 \\
    & \hspace{7cm} \lesssim \Big(  \log \frac{1}{\varepsilon} \Big)^{|\mathcal{R}|} \|D\|_{X^{s,b}}^{2|\mathcal{D}|} \prod_{i \in \mathcal{R}} \frac{1}{\langle M_i \rangle^{2\alpha - \varepsilon_1}}.
    \end{align*}
\end{prop}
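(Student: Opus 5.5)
The plan is to plug Lemma~\ref{lemma:reduction-2} directly into the left-hand side and then sum the dyadic scales as in the proof of Proposition~\ref{prop:Qsmoothing-mest}.

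First, the time localization. By the definition of $X^{s,b}_\delta$ through extensions, and since $b'-1 = -\frac12+\beta'$, we can replace each $u_i$ by $\eta(t)u_i$ and drop the subscript $\delta$. We take $\beta'\le\beta$, so that $\|\cdot\|_{X^{0,b'-1}}\le\|\cdot\|_{X^{0,-\frac12+\beta}}$. Fix the ordering $\gamma$ and the dyadic sizes. Both pieces of $\mathcal Q$ in \eqref{eq:Q_Multilinear}, the pure product and the product involving $\int_{\mathbb T^2}$, are then covered by the two bounds of Lemma~\ref{lemma:reduction-2}. This holds on a single event of probability $\ge 1-\varepsilon$, uniformly in $N$ and in the $N_j$, because the lemma's event is already uniform over dyadic scales. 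Each summand is therefore bounded by
\begin{align}
\frac{(\log(1/\varepsilon))^{1/2}}{N^{\frac12+\frac c4}\,N_{(1)}^{\alpha-\varepsilon_1}}\prod_{j=2}^5\|P_{N_{(j)}}u_{(j)}\|_{L^{8+\delta_1}_{x,t}}.
\end{align}

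Next, the remaining factors. For $j\in\mathcal D$ we use \eqref{eq:L8bound-1} with $\frac12+\delta_1\le s$, which is possible since $s>\frac12$. Bernstein-free dyadic pieces give $\|P_{N_j}D\|_{L^{8+\delta_1}}\lesssim N_j^{-(s-\frac12-\delta_1)}\|D\|_{X^{s,b}}$. Alternatively one keeps $\|D\|_{X^{s,b}}$ and pays $N_j^{\varepsilon_1}$-type losses absorbed by Cauchy--Schwarz exactly as in Case~1 of Proposition~\ref{prop:Qsmoothing-mest}. For $j\in\mathcal R$, \eqref{eq:Lp_bound-Prob} with $p=8+\delta_1$ gives $(\log(1/\varepsilon))^{1/2}(\log N_j)^{1/2}N_j^{-\alpha}$. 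Since $N_j\ge M_j$, the factor $N_j^{-\alpha+\varepsilon_1/10}$ summed over $N_j\ge M_j$ produces $\langle M_j\rangle^{-\alpha+\varepsilon_1}$. The top factor $N_{(1)}^{-\alpha+\varepsilon_1}$ likewise gives $\langle M_{(1)}\rangle^{-\alpha+\varepsilon_1}$, because $u_{(1)}=R$ and $N_{(1)}\ge M_{(1)}$. After squaring, the total power of the logarithm is $|\mathcal R|$.

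Finally, the dyadic summation, which is the only delicate point. We must sum over $N_{(1)}\ge\dots\ge N_{(5)}$ with $N\lesssim N_{(1)}$ and then in $N$ with weight $N^{2s}$. The gain available from $N$ is $N^{2s-1-c/2}$, which is summable since $c>4(s-\frac12)$ means $2s-1-\frac c2<0$. This is exactly the role of the hypothesis on $c$, and it leaves no positive power of $N$. The sums over $N_{(2)},\dots,N_{(5)}$ converge thanks to the negative powers $N_j^{-\alpha}$ (for $R$) or through Cauchy--Schwarz in $\ell^2$ against $\|P_{N_j}D\|_{X^{s,b}}$ (for $D$). The sum over $N_{(1)}$ converges by $N_{(1)}^{-\alpha+\varepsilon_1}$ with $\varepsilon_1<\alpha$.

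The main obstacle is the logarithmic and $\varepsilon_1$ losses across the five dyadic sums together with the coupling $N\lesssim N_{(1)}$. We handle it by first summing in $N$, using $\sum_N N^{2s-1-c/2}\lesssim1$, and then applying Cauchy--Schwarz in the inner sum over $(N_{(1)},\dots,N_{(5)})$ as in Proposition~\ref{prop:Qsmoothing-mest}. Each $\log N_j$ factor is absorbed into a small power $N_j^{\varepsilon_1/10}$. This yields the stated bound $(\log\frac1\varepsilon)^{|\mathcal R|}\|D\|_{X^{s,b}}^{2|\mathcal D|}\prod_{i\in\mathcal R}\langle M_i\rangle^{-2\alpha+\varepsilon_1}$.
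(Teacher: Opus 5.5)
Your proposal is correct and follows the paper's proof essentially step by step. You apply Lemma~\ref{lemma:reduction-2}, bound the $D$ factors via \eqref{eq:L8bound-1} and the $R$ factors via \eqref{eq:Lp_bound-Prob}, use $c>4(s-\frac12)$ to make $\sum_N N^{2s-1-c/2}$ finite, and then sum the remaining dyadic scales. The one variation, which is cosmetic, is that you can control the $D$ pieces by the decay $N_j^{-(s-\frac12-\delta_1)}$ with $\delta_1<s-\frac12$, instead of the paper's Cauchy--Schwarz with an $N_{(1)}^{\varepsilon_1}$ loss; this makes the final $N_{(1)}$ sum slightly cleaner.
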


\begin{proof}[Proof of Proposition~\ref{prop:Qsmoothing-par}]
By Lemma \ref{lemma:reduction-2} and \eqref{eq:L8bound-1} in Lemma~\ref{lemma:L8bound}, with probability $\geq 1 - \varepsilon$ we have
\begin{align}\label{eq:red-par-int1}
    \sum_N
    &N^{2s} 
    \Big( 
    \sum_{\substack{N_{(1)} \geq \ldots\geq N_{(5)} \\ N_i \geq M_i }} \lVert P_N P_{\langle \tau + |n|^2 \rangle > N^{1 + c}} \mathcal{Q}(P_{N_{1}}u_{1}, P_{N_{2}}u_{2}, P_{N_{3}}u_{3}, P_{N_{4}}u_{4}, P_{N_{5}}u_{5}) \rVert_{X^{0,b'-1}_\delta} \Big)^2 \nonumber \\
    & \lesssim \sum_N N^{2s} \Big( \sum_{\substack{N_{(1)} \geq \ldots\geq N_{(5)} \\ N_i \geq M_i }} \frac{(  \log ( 1/\varepsilon) )^{1/2}}{N^{\frac{1}{2} + \frac{c}{4}} N_{(1)}^{\alpha - \varepsilon_1}} \prod_{j \neq 1} \|P_{N_{(j)}} u_{(j)} \|_{L^{8 + \delta_1}} \Big)^2 \nonumber \\
    & \lesssim 
    \Big(\log \frac{1}{\varepsilon}\Big)^{|\mathcal{R}|} \sum_N \frac{N^{2s}}{N^{1 + \frac{c}{2}}} \bigg( \sum_{N_{(1)}}  \frac{1}{N_{(1)}^{\alpha - \varepsilon_1}} \sum_{\substack{N_{(2)} \geq \ldots\geq N_{(5)} \\ N_i \geq M_i }}  \prod_{i \in \mathcal{D}} \|P_{N_{i}} D \|_{X^{\frac{1}{2} + \delta_1,b}} \prod_{j \in \mathcal{R} \setminus \{(1)\}} \frac{1}{N_{j}^{\alpha - \varepsilon_1}} \bigg)^2,
\end{align}
for any $\delta_1 > 0$  and $\varepsilon_1 > 0$. 
The inner sum in $N_{(2)}, \ldots, N_{(5)}$ is bounded by
\begin{align}
    \prod_{j \in \mathcal R \setminus \{(1)\}} \bigg( \sum_{ N_j \geq M_j }  \frac{1}{N_{j}^{\alpha - \varepsilon_1}} \bigg)
    \, \cdot \, 
    \prod_{i \in \mathcal D} \bigg( \sum_{ N_i \leq N_{(1)} } \|P_{N_{i}} D \|_{X^{\frac{1}{2} + \delta_1,b}} \bigg)
    \lesssim 
    \prod_{j \in \mathcal R \setminus \{(1)\}} \frac{1}{\langle M_j \rangle^{\alpha - \varepsilon_1}} \, N_{(1)}^{\varepsilon_1} \|D\|_{X^{s,b}}^{|\mathcal D|},
\end{align}
since by Cauchy-Schwarz, if $\delta_1 >0$ is small enough so that $1/2 + \delta_1 \leq s$, we have 
\begin{align*}
    \Big( \sum_{N_i \leq N_{(1)} } \|P_{N_i} D \|_{X^{\frac{1}{2} + \delta_1 , b}} \Big)^2 
    & \leq \sum_{N_{i}} \|P_{N_{i}} u_{i}\|_{X^{\frac{1}{2} + \delta_1 , b}}^2 \, \cdot \,  \sum_{N_i \leq N_{(1)}} 1
    \, \lesssim \,   N_{(1)}^{\varepsilon_1} \,  \|D\|_{X^{s,b}}^2.
\end{align*} 
Therefore, choosing $\varepsilon_1$ such that $ 2\varepsilon_1  < \alpha$ and $c > 4 (s  - 1/2)$, we get the bound
\begin{align}
    \eqref{eq:red-par-int1}
    &\lesssim \frac{ \big(  \log 1/\varepsilon \big)^{|\mathcal{R}|} \, \|D\|_{X^{s,b}}^{2|\mathcal{D}|}}{ \prod_{j \in \mathcal{R} \setminus \{(1)\}}  \langle M_j \rangle^{2\alpha - 2\varepsilon_1}} \, 
    \sum_N \frac{1}{N^{1 - 2s + \frac{c}{2}}} \bigg( \sum_{N_{(1)}} \frac{1}{N_{(1)}^{\alpha  - 2\varepsilon_1}} \bigg)^2
    \lesssim  \frac{\big(  \log 1/\varepsilon \big)^{|\mathcal{R}|} \, \|D\|_{X^{s,b}}^{2|\mathcal{D}|}}{ \prod_{j \in \mathcal{R}} \langle M_j \rangle^{2\alpha  - 4\varepsilon_1}}.
\end{align} 
Since $\varepsilon_1$ is arbitrarily small we get the result. 
\end{proof}

To conclude this section, we give the proof of Lemma~\ref{lemma:reduction-2}. 
\begin{proof}[Proof of Lemma~\ref{lemma:reduction-2}]
For a general function $F$ we have 
\begin{align}
    \lVert 
    P_{\left\{ \langle \tau + |n|^2 \rangle > N^{1+c} \right\}} \, 
    P_N \, 
    F
    \rVert_{X^{0,-\frac12 + \beta}}^2
     = 
    \sum_{|n| \simeq N} \int_{\mathbb{R}} \frac{\mathbbm 1_{\langle \tau + |n|^2 \rangle > N^{1+c}}}{\langle \tau + |n|^2 \rangle^{1 - 2\beta}} |
    \widetilde F(\tau,n)|^2 d\tau 
    \lesssim \frac{1}{N^{1 + \frac{c}{2}}}\, 
    \lVert 
    P_N \, F
    \rVert_{L^2_{x,t}}^2,
\end{align}
where we used that $c \geq 4\beta/(1 - 4\beta)$ implies $(1+c)(1 - 2\beta) \geq 1 + c/2$. Therefore, taking $F = P_{N_1}u_1 \, 
    \overline{P_{N_2} u_2} \, 
    P_{N_3}u_3 \, 
    \overline{P_{N_4}u_4} \,
    P_{N_5} u_5$, 
by H\"older's inequality we get
\begin{align}
    \big\|
    P_N 
    \big(
    P_{N_1}u_1 \, 
    \overline{P_{N_2} u_2} \, 
    P_{N_3}u_3 \, 
    \overline{P_{N_4}u_4} \, 
    P_{N_5}u_5 \, 
    \big) \big\|_{L^2_{x,t}} 
    & \leq 
    \| P_{N_{(1)}} R \|_{L^q_{x,t}} 
    \prod_{j=2}^5 
    \|
    P_{N_{(j)}} u_{(j)} 
    \|_{L^{8 + \delta_1}_{x,t}} \\
    & \lesssim 
    \frac{( \log 1/\varepsilon )^{1/2}}{N_{(1)}^{\alpha - \varepsilon_1}} \, 
    \prod_{j=2}^5 
    \|
    P_{N_{(j)}} u_{(j)} 
    \|_{L^{8 + \delta_1}_{x,t}}
\end{align}
for any $\delta_1 > 0$ with $q = 2(8+\delta_1)/\delta_1$, 
and for any $\varepsilon_1 > 0$, 
where the last inequality follows from Lemma~\ref{lemma:L8bound}
with probability $\geq 1 - \varepsilon$.

To prove the second inequality, we first write
\begin{align}\label{eq:red-par-int2}
    \big\lVert 
    P_N P_{N_1} u_1
    \int \left(
    \overline{P_{N_2} u} \, 
    P_{N_3}u \, 
    \overline{P_{N_4}u} \, 
    P_{N_5}u \right) 
    \big\|_{L^2_{x,t}}^2 \nonumber 
    & =
    \int 
    \lVert P_N P_{N_1} u_1 \rVert_{L^2_x}^2
    \left|
    \int 
    \overline{P_{N_2} u_2} 
    P_{N_3}u_3 
    \overline{P_{N_4}u_4} 
    P_{N_5}u_5 \, dx \right|^2
    dt  \\
    & \lesssim 
    \int 
    \lVert P_N P_{N_1} u_1 \rVert_{L^2_x}^2
    \prod_{j=2}^5
    \lVert P_{N_j} u_j \rVert_{L^4_x}^2 \, 
    dt. 
\end{align}
We consider two different cases.  
If $N_{(1)} = N_1$, 
let $q = (8+\delta_1)/\delta_1$ and by H\"older we bound
\begin{align}
    \int 
    \lVert P_N P_{N_{(1)}} R \rVert_{L^2_x}^2
    \prod_{j=2}^5
    \lVert P_{N_j} u_j \rVert_{L^4_x}^2 \, 
    dt 
    & \leq
    \big\lVert \lVert P_N P_{N_{(1)}} R \rVert_{L^2_x}^2 
    \big\rVert_{L^q_t} \, \cdot \, 
    \prod_{j=2}^5
    \big\lVert \lVert P_{N_j} u_j \rVert_{L^4_x}^2 
    \big\rVert_{L^{4+\frac{\delta_1}{2}}_t} \\
    & \lesssim 
    \lVert P_N P_{N_{(1)}} R 
    \rVert_{L^{2q}_{x,t}}^2 \, 
    \prod_{j=2}^5
    \lVert P_{N_j} u_j
    \rVert_{L^{8+\delta_1}_{x,t}}^2 \\
    & \lesssim 
    \frac{\log(1/\varepsilon)}{N_{(1)}^{2\alpha - \varepsilon_1}} \, 
    \prod_{j=2}^5
    \lVert P_{N_{(j)}} u_{(j)}
    \rVert_{L^{8+\delta_1}_{x,t}}^2,
\end{align}
with probability $\geq 1 - \varepsilon$, where we used Lemma~\ref{lemma:L8bound}. 
On the other hand, if $N_{(1)} \neq N_1$, 
then assume $N_{(1)} = N_2$ without loss of generality. By H\"older with the same $q$ as above and Lemma~\ref{lemma:L8bound}, we get
\begin{align}
    \eqref{eq:red-par-int2} = \int 
    \lVert P_N P_{N_1} u_1 \rVert_{L^2_x}^2
    \lVert P_{N_{(1)}} R \rVert_{L^4_x}^2
    \prod_{j=3}^5
    \lVert P_{N_j} u_j \rVert_{L^4_x}^2 \, 
    dt 
    & \leq 
    \int 
    \lVert P_{N_{(1)}} R \rVert_{L^4_x}^2
    \prod_{j\neq 2}
    \lVert P_{N_j} u_j \rVert_{L^4_x}^2 \, 
    dt \\
    & \leq 
    \lVert P_{N_{(1)}} R \rVert_{L^{2q}_{x,t}}^2
    \prod_{j\neq 2}
    \lVert P_{N_j} u_j \rVert_{L^{8+\delta_1}_{x,t}}^2 \\
    & \lesssim 
    \frac{\log(1/\varepsilon)}{N_{(1)}^{2\alpha - \varepsilon_1}} \, 
    \prod_{j=2}^5
    \lVert P_{N_{(j)}} u_{(j)}
    \rVert_{L^{8+\delta_1}_{x,t}}^2.
    \qedhere
\end{align} 
\end{proof}

Thanks to Proposition~\ref{prop:Qsmoothing-par} and in view of \eqref{eq:decomp-lp}, 
only the contribution close to the paraboloid
\begin{align}\label{eq:decomp-parproj}
    \sum_{\substack{N_{(1)} \geq \ldots\geq N_{(5)} \\ N_i \geq M_i }} \lVert P_N P_{\langle \tau + |n|^2 \rangle \leq N^{1+c}} \mathcal{Q}(P_{N_{1}}u_{1}, P_{N_{2}}u_{2}, P_{N_{3}}u_{3}, P_{N_{4}}u_{4}, P_{N_{5}}u_{5}) \rVert_{X^{0,b'-1}_\delta}
\end{align}
remains to be bounded, and only when $u_i = R$ whenever $N_i \simeq N_{(1)}$.

\subsection{Fourier restriction method} 
\label{subsec:frm}

To bound \eqref{eq:decomp-parproj}, we follow the strategy in \cite{Bourgain1996}. Write the deterministic term $D$ as
\begin{align}
    D(t,x) 
    = \sum_n d_n(t) \, e^{inx}
    & = \sum_n \int_\tau \widetilde d_n(\tau - |n|^2) \, e^{inx - i|n|^2 t }\, e^{i\tau t} \, d\tau \\
    & = \sum_n 
    \Big( \int_\tau  \phi(\tau) \, D_n(\tau)  \, e^{it\tau} \, d\tau \Big)
    e^{inx - i|n|^2 t },
\end{align}
where we denote
\begin{equation}
    \phi(\tau) = 
    \Big( \sum_k \langle k \rangle^{2s} | \widetilde d_k(\tau - |k|^2) |^2 \Big)^{1/2}, 
    \qquad 
    D_n(\tau) 
    = \frac{\widetilde d_n(\tau - |n|^2)}{\big( \sum_k \langle k \rangle^{2s} | \widetilde d_k(\tau - |k|^2) |^2 \big)^{1/2}}.
\end{equation}
The coefficient $D_n(\tau)$ satisfies
\begin{equation}\label{eq:Bound_Dn}
    \lVert \langle n \rangle^s D_n(\tau) \rVert_{\ell^2_n}^2
    = \sum_n \langle n \rangle^{2s}\, |D_n(\tau)|^2 = 1, 
    \qquad \forall \tau \in \mathbb R, 
\end{equation}
while 
\begin{equation}\label{eq:Norm_Of_Phi}
    \int \langle \tau \rangle^{2b} \, |\phi(\tau)|^2 \, d\tau
    = 
    \int \langle \tau \rangle^{2b} 
    \sum_k \langle k \rangle^{2s} | \widetilde d_k(\tau - |k|^2) |^2 \, d\tau
    =
    \lVert D \rVert_{X^{s,b}}^2.
\end{equation}
Hence, for every $j \in \{1, \ldots, 5\}$, we can write
\begin{equation}\label{eq:Cases_RD}
    P_M u_j 
    = 
    \sum_{|n| \simeq M } (C_j)_n(t) \, e^{inx - it|n|^2},
    \quad \text{ with } \quad \,  
    (C_j)_n(t) := 
    \left\{
    \begin{array}{ll}
        \displaystyle 
        R_n := \frac{g_n^\omega}{\langle n \rangle^{1+\alpha}}, 
        & \text{ if } j \in \mathcal R, \\
        \displaystyle
        \int  \phi(\tau) \, D_n(\tau) \,  e^{it\tau} \, d\tau, 
        & \text{ if } j \in \mathcal D.
    \end{array}
    \right.
\end{equation}
Denote $\{\iota_j\}_{j=1}^5$ as in \eqref{eq:def-iotaj}, and let
\begin{equation}
    \Omega
    = |n|^2 - |n_1|^2 + |n_2|^2 - |n_3|^2 + |n_4|^2 - |n_5|^2. 
\end{equation}
Then, using \eqref{eq:Cases_RD}, 
we write the Fourier coefficients for $\mathcal{Q}$ in \eqref{eq:decomp-parproj} as
\begin{align}\label{eq:Fourier_Coeff_Q}
    & \mathcal{Q} (P_{N_1}u_1, P_{N_2}u_2, P_{N_3}u_3, P_{N_4}u_4, P_{N_5}u_5)_n \nonumber \\
    & \hspace{1cm}
    = 
    \sum_{n_1, \ldots, n_5}^*
    \prod_{j=1}^5(C_j)_{n_j}(t)^{\iota_j}
    e^{-it (|n|^2 - \Omega)} \\
    & \hspace{1cm}
    = 
    \int 
    \sum_{n_1, \ldots, n_5}^*
    \bigg[
    \prod_{k \in \mathcal R} R_{n_k}^{\iota_k} \cdot 
    \prod_{j \in \mathcal D} D_{n_j}(\tau_j)^{\iota_j} \cdot 
    e^{-it (|n|^2 - \Omega - \sum_{j \in \mathcal D} \iota_j \tau_j ) } \, 
    \bigg] \, 
    \prod_{j \in \mathcal D} \phi(\tau_j) \, d\tau_j,   \nonumber \\
\end{align}
where for simplicity we denote 
\begin{align}
    \sum_{n_1, \ldots, n_5}^* 
    = \sum_{n_1, n_2, n_3, n_4, n_5} 
    \big( \prod_{j=1}^5 \mathbbm 1_{|n_i| \simeq N_i} \big) \, 
    \mathbbm 1_{n = n_1 - n_2 + n_3 - n_4 + n_5} \, 
    \mathbbm 1_{\mathcal A^c}, 
\end{align}
with the set $\mathcal A$ being defined, according to the renormalized nonlinearity $\mathcal Q$, by
\begin{align}
    \mathcal A = 
    \left\{
    \begin{array}{l}
        n_1 = n  \\
        n_3 \neq n \neq n_5 
    \end{array}
    \right\}
    \cup 
    \left\{
    \begin{array}{l}
        n_3 = n  \\
        n_1 \neq n \neq n_5 
    \end{array}
    \right\}
    \cup 
    \left\{
    \begin{array}{l}
        n_5 = n  \\
        n_1 \neq n \neq n_3 
    \end{array}
    \right\}.
\end{align}
Hence, from \eqref{eq:Fourier_Coeff_Q} we have
\begin{align}
    & \mathcal F_{x,t}
    \big(  \, 
    \eta_\delta  \, \mathcal{Q} (P_{N_1}u_1, P_{N_2}u_2, P_{N_3}u_3, P_{N_4}u_4, P_{N_5}u_5)
    \, \big)
    (\tau - |n|^2,n) \\
    & \hspace{1cm}
    = 
    \int 
    \sum_{n_1, \ldots, n_5}^*
    \bigg[
    \prod_{k\in \mathcal R} R_{n_k}^{\iota_k} \cdot 
    \prod_{j \in \mathcal D} D_{n_j}(\tau_j)^{\iota_j} \cdot
    \widehat{\eta_\delta}
    \big(
    \tau - \sum_{j\in \mathcal D} \iota_j \tau_j  - \Omega
    \big)
    \bigg]
    \prod_{j \in \mathcal D} \phi(\tau_j) \, d\tau_j.
\end{align}
Using Cauchy-Schwarz in the $\tau_j$ integrals, by \eqref{eq:Norm_Of_Phi} we get
\begin{align}
    & 
    \big| \mathcal F_{x,t}
    ( 
    \eta_\delta  \mathcal{Q} (P_{N_1}u_1, P_{N_2}u_2, P_{N_3}u_3, P_{N_4}u_4, P_{N_5}u_5)
    )
    (\tau - |n|^2,n) \big|^2 \\
    & \hspace{1.5cm}
    \leq 
    \lVert D \rVert_{X^{s,b}}^{2|\mathcal D|}
    \int 
    \Bigg|
    \sum_{n_1, \ldots, n_5}^*
    \bigg[
    \prod_{k \in \mathcal R} R_{n_k}^{\iota_k} \cdot 
    \prod_{j \in \mathcal D}  D_{n_j}(\tau_j)^{\iota_j} \cdot
    \widehat{\eta_\delta}
    \big(
    \tau - \sum_{j \in \mathcal D} \iota_j \tau_j  - \Omega
    \big)
    \bigg]
    \Bigg|^2
    \prod_{j \in \mathcal D} \frac{d\tau_j}{\langle \tau_j \rangle^{2b}}.
\end{align}
With this, taking the supremum in $\tau, \tau_j$ of the $\ell^2_n$ norm,  
we bound the norm in \eqref{eq:decomp-parproj} by
\begin{align}\label{eq:Last_To_Estimate}
    & \big\lVert P_N  P_{\langle \tau + |n|^2 \rangle \leq N^{1+c}} 
    \, \eta_\delta \, 
     \mathcal{Q} (P_{N_1}u_1, P_{N_2}u_2, P_{N_3}u_3, P_{N_4}u_4, P_{N_5}u_5) \big\rVert_{X^{s,b'-1}} \nonumber \\
    & \quad \lesssim
    \|D\|_{X^{s,b}}^{|\mathcal{D}|}
    \bigg \lVert \frac{ \mathrm1_{|n| \simeq N } \,  \mathrm1_{\langle \tau \rangle \leq N^{1+c}} }{\langle \tau \rangle^{1-b'} \prod_{j \in \mathcal D} \langle \tau_j \rangle^b} \sum_{n_1, \ldots, n_5}^*
    \bigg[
    \prod_{k \in \mathcal R} R_{n_k}^{\iota_k} \cdot 
    \prod_{j \in \mathcal D}  D_{n_j}(\tau_j)^{\iota_j} \cdot
    \widehat{\eta_\delta}
    \big(
    \tau - \sum_{j \in \mathcal D} \iota_j \tau_j  - \Omega
    \big)
    \bigg] 
    \bigg\rVert_{\ell^2_n L^2_\tau L^2_{\{\tau_j\}}} \nonumber \\
    & \quad \lesssim
    \|D\|_{X^{s,b}}^{|\mathcal{D}|} \, N^{ 2\beta'} \sup_{\tau, \{\tau_j\}_{j \in \mathcal D}} \bigg\lVert \sum_{n_1, \ldots, n_5}^*
    \bigg[
    \prod_{k \in \mathcal R} R_{n_k}^{\iota_k} \cdot 
    \prod_{j \in \mathcal D}  D_{n_j}(\tau_j)^{\iota_j} \cdot
    \widehat{\eta_\delta}
    \big(
    \tau - \sum_{j \in \mathcal D} \iota_j \tau_j  - \Omega
    \big)
    \bigg] \bigg\rVert_{\ell^2_{|n|\simeq N}} ,
\end{align}
where we used that due to $2(1-b') = 1 - 2\beta'$ and $2b = 1 + 2\beta$ we have
\begin{align}
    \bigg \lVert \frac{\mathrm1_{\langle \tau \rangle \leq N^{1+c}} }{\langle \tau \rangle^{1-b'} \prod_{j \in \mathcal D} \langle \tau_j \rangle^b}  \bigg\rVert_{ L^2_\tau L^2_{\{\tau_j\}}}^2 = 
    \int \frac{\mathrm1_{\langle \tau \rangle \leq N^{1+c}}}{\langle \tau \rangle^{2(1-b')}} \, d\tau \cdot \prod_{j\in \mathcal D} \int  \frac{d\tau_j}{\langle \tau_j \rangle^{2b}} 
    \lesssim_b N^{4\beta'}.
\end{align}
For fixed $\tau$ and $\{\tau_j\}_{j \in \mathcal D}$, 
by arranging the sum in $n_1, \ldots, n_5$ according to the value of $\Omega$ and the triangle inequality,  we bound
\begin{align}
    & \bigg\lVert \sum_{n_1, \ldots, n_5}^*
    \Big(
    \prod_{k \in \mathcal R} R_{n_k}^{\iota_k} \cdot 
    \prod_{j \in \mathcal D}  D_{n_j}(\tau_j)^{\iota_j} \cdot
    \widehat{\eta_\delta}
    \big(
    \tau - \sum_{j \in \mathcal D} \iota_j \tau_j  - \Omega
    \big)
    \Big) 
    \bigg\rVert_{\ell^2_{|n|\simeq N}} \\
    & \qquad \qquad \lesssim 
    \sum_{\mu \in \mathbb Z} \Big|\widehat{\eta_\delta}
    \big(
    \tau - \sum_{j \in \mathcal D} \iota_j \tau_j  - \mu \big) \Big| \, 
    \bigg\lVert \sum_{ \substack{n_1, \ldots, n_5 \\ \Omega = \mu}}^*
    \prod_{k \in \mathcal R} R_{n_k}^{\iota_k} \cdot 
    \prod_{j \in \mathcal D}  D_{n_j}(\tau_j)^{\iota_j} 
    \bigg\rVert_{\ell^2_{|n|\simeq N}} \\
    & \qquad \qquad \lesssim 
    \sup_{\mu}
    \bigg\lVert \sum_{ \substack{n_1, \ldots, n_5 \\ \Omega = \mu}}^*
    \prod_{k \in \mathcal R} R_{n_k}^{\iota_k} \cdot 
    \prod_{j \in \mathcal D}  D_{n_j}(\tau_j)^{\iota_j} 
    \bigg\rVert_{\ell^2_{|n|\simeq N}} \, \cdot \, 
    \sum_{\mu \in \mathbb Z} \Big| \widehat{\eta_\delta}
    \big(
    \tau - \sum_{j \in \mathcal D} \iota_j \tau_j  - \mu \big) \Big|, 
\end{align}
and since $\sum_{\mu \in \mathbb Z} |\widehat{\eta_\delta} (A - \mu) |\lesssim_\eta 1$ for every $A \in \mathbb R$, we get
\begin{align}\label{eq:To_Bound_Pre}
    \eqref{eq:Last_To_Estimate}
    & \lesssim_{\eta,b}  \|D\|_{X^{s,b}}^{|\mathcal{D}|} \, 
    N^{2\beta'} \, 
    \sup_{\mu, \,  \{\tau_j \}_{j \in \mathcal D}} 
    \bigg\lVert \sum_{ \substack{n_1, \ldots, n_5 \\ \Omega = \mu}}^*
    \prod_{k \in \mathcal R} R_{n_k}^{\iota_k} \cdot 
    \prod_{j \in \mathcal D}  D_{n_j}(\tau_j)^{\iota_j} 
    \bigg\rVert_{\ell^2_{|n|\simeq N}}.
    \end{align}
Hence, it suffices to bound
\begin{align}\label{eq:To_Bound}
    \sum_{|n| \simeq N}
    \bigg|
    \sum_{ \substack{n_1, \ldots, n_5 \\ \Omega = \mu}}^*
    \prod_{k \in \mathcal R} R_{n_k}^{\iota_k} \cdot 
    \prod_{j \in \mathcal D}  D_{n_j}(\tau_j)^{\iota_j}
    \bigg|^2 
    = 
    \sum_{|n| \simeq N}
    \Big|
    \sum_{\substack{n_1, \ldots, n_5 \\ \Omega= \mu}}^*
    (u_1)_{n_1} \, \overline{(u_2)_{n_2}} \, (u_3)_{n_3} \, \overline{(u_4)_{n_4}} \, (u_5)_{n_5} 
    \bigg|^2,
\end{align}
uniformly in $\mu$ and $\{ \tau_j \}_{j \in \mathcal D}$ and for every combination of $u_i \in \{R, D (\tau_i)\}$ such that $u_{(1)} = R$.  To do this, we split the sum above into parts 
\begin{equation}\label{eq:To_Bound_j}
    \sum_{|n| \simeq N}
    \bigg|
    \sum_{\substack{(n_1, \ldots, n_5) \in \mathcal A_j \\ \Omega= \mu}}^*
    (u_1)_{n_1} \, \overline{(u_2)_{n_2}} \, (u_3)_{n_3} \, \overline{(u_4)_{n_4}} \, (u_5)_{n_5} 
    \bigg|^2, 
\end{equation}
where $\mathcal A_j \subset \mathcal A = \{ \, (n_1,n_2,n_3,n_4,n_5) \in (\mathbb Z^2)^5 \, : \, n_1 - n_2 + n_3 - n_4 + n_5 = n \, \} $ are defined as 
\begin{align}\label{eq:Aj}
    \begin{array}{rl}
    \mathcal{A}_1 & = \mathcal A \cap \{ \, n_1, \, n_3, \, n_5 \neq n, \, n_2, \, n_4 \, \}, \\
    \mathcal{A}_2 & = \mathcal A \cap \{ \, n_4 = n_5 \text{ and }  \, n_1, \, n_3 \neq n_2 \, \}, \\
    \mathcal{A}_3 & = \mathcal A \cap \{ \, n_4 = n_5 = n \text{ and } \, n_1, \, n_3 \neq n_2 \,  \}, \\
    \mathcal{A}_4 & = \mathcal A \cap \{  \,  n_1 = n_4 = n_5  \text{ and } \, n_1, \, n_3 \neq n_2 \, \}, \\
    \mathcal{A}_5 & = \mathcal A \cap \{  \,  n_2 = n_4 = n_5  \text{ and } \, n_1, \, n_3 \neq n_2 \,  \}, \\
    \mathcal{A}_6 & = \mathcal A \cap \{ \,  n_3 = n_5 = n \,  \text{ and } \, n_2, \, n_4 \neq n_1 \, \}, \\
    \mathcal{A}_7 & = \mathcal A \cap \{ \,  n_1 = n_3 = n_5 = n \, \text{ and } \, n_2 + n_4 = 2n \, \text{ and } \, n_2 \neq n \,\}, \\
    \mathcal{A}_8 & = \mathcal A \cap \{ \, n_1 = n_3 = n_4 = n_5 \text{ and } \, 2n_1 - n_2 = n \, \text{ and } \, n_1 \neq n \,  \}, \\
    \mathcal{A}_9 & = \mathcal A \cap \{ \,  n_2 = n_4 = n_5 = n \text{ and } \, n_1 + n_3 = 2n \, \text{ and } \, n_1 \neq n \,  \}, \\
    \mathcal{A}_{10} & = \mathcal A \cap \{ \, n_1 = n_2 \text{ and } \, n_3 = n_4 = n_5 = n \, \}, \\
    \mathcal{A}_{11} & = \mathcal A \cap \{ \,  n_1 = n_2 = n_3 = n_4 = n_5 = n \,  \};
    \end{array}
\end{align}
and correspond to different resonance conditions coming from the decomposition 
\begin{align}
    \mathcal{Q}(u) = \sum_{j=1}^{11} \mathcal{Q}_j(u),
\end{align}
of the renormalized nonlinearity \eqref{eq:cauchy-nls-gauge} as in \cite{NahmodStaffilani2015}, 
where each $\mathcal Q_j(u)$ is defined as
\begin{align}\label{eq:qnl-fcoeff}
\begin{array}{rcl}
    \mathcal{Q}_1(u)_n  & = & \sum_{ \substack{  n_1 - n_2 + n_3 - n_4 + n_5 = n \\ n_1, \, n_3, \, n_5 \neq n,  \, n_2, \, n_4  } } u_{n_1} \, \overline{u_{n_2}} \, u_{n_3} \, \overline{u_{n_4}} u_{n_5}, \\
    \mathcal{Q}_2(u)_n & = & 
     6 \, M \, \mathcal W_3(u)_n, \\
    \mathcal{Q}_3(u)_n & = & 
    - 6 \, |u_n|^2 \mathcal W_3(u)_n, \\
    \mathcal{Q}_4(u)_n & = & - 6 \mathcal W_3(\mathcal P_3(u),u,u)_n, \\
    \mathcal{Q}_5(u)_n  & = & - 3 \mathcal W_3(u,\mathcal P_3(u),u)_n, \\
    \mathcal{Q}_6(u)_n & = & 
    - 3\, u_n^2 \, \overline{\mathcal W_3(u)_n}, \\
    \mathcal{Q}_7(u)_n & = & u_n^3 \sum_{  \substack{n_2 + n_4 = 2n \\ n_2 \neq n } } \overline{u_{n_2}} \, \overline{u_{n_4}}, \\
    \mathcal{Q}_8(u)_n & = & 2 \sum_{ \substack{ 2n_1 - n_2 = n \\ n_1 \neq n } } |u_{n_1}|^2 u_{n_1}^2 \overline{u_{n_2}}, \\
    \mathcal{Q}_9(u)_n & = & 3 |u_n|^2 \overline{u_n} \sum_{ \substack{ n_1 + n_3 = 2n \\ n_1 \neq n } } u_{n_1} u_{n_3}, \\
    \mathcal{Q}_{10}(u)_n & = &
    - 6\, M \, |u_n|^2 \,  u_n, \\
    \mathcal{Q}_{11}(u)_n & = & 4\, |u_n|^4 \, u_n,
\end{array}
\end{align}
and where we are denoting the conserved mass by $M = \int |u|^2 \, dx$ and 
\begin{align}
    \mathcal P_3(u)_n = |u_n|^2 u_n, 
    \qquad \text{ and } \qquad 
    \mathcal W_3(u)_n = \big( |u|^2u  - 2 M u \big)_n. 
\end{align}
In this setting, again slightly abusing notation, denote the corresponding multilinear operators by
\begin{align}\label{eq:Def_Q_ell_Abuse}
    \mathcal{Q}_\ell(u_1, \ldots, u_5)_n = 
    \sum_{\substack{ 
    (n_1, \ldots, n_5) \in \mathcal{A}_\ell  \\
    |n_i| \simeq N_i, \, \, \,  \Omega = \mu } } (u_1)_{n_1} \overline{(u_2)_{n_2}} (u_3)_{n_3} \overline{(u_4)_{n_4}} (u_5)_{n_5}, \qquad \ell \in \{1,\ldots,11\}.
\end{align}
Then, according to \eqref{eq:Last_To_Estimate}, \eqref{eq:To_Bound_Pre} and \eqref{eq:To_Bound}, to prove Proposition~\ref{prop:Qsmoothing-simp} it suffices to show that for any $\varepsilon > 0$ we have
\begin{align}\label{eq:gralbound_0}
    \sum_N N^{2s + 4\beta'} 
    \bigg( \sum_{\substack{N_{(1)} \geq \ldots \geq N_{(5)} \\ N_i \geq M_i }}\Big( \sum_{|n| \simeq N}  \big| \mathcal{Q}_\ell(u_1, \ldots, u_5)_n \big|^2 \Big)^{\frac12}  \bigg)^2 
    \leq  \Big(\log \frac{1}{\varepsilon} \Big)^{|\mathcal{R}|} \prod_{i \in \mathcal{R}}  \frac{1}{\langle M_i \rangle^{2\varepsilon_1}}
\end{align}
with probability $\geq 1-\varepsilon$. 
In turn, for this, it suffices to prove the following result.
\begin{prop}\label{prop:Final_Estimate}
Suppose $u_{(1)} = R$. Then, for any $\varepsilon > 0$, with probability $\geq 1 - \varepsilon$ we get
    \begin{align}\label{eq:gralbound}
    \sum_{|n| \simeq N}  \big| \mathcal{Q}_\ell(u_1, \ldots, u_5)_n \big|^2 
    \lesssim  
     \Big(\log \frac{1}{\varepsilon} \Big)^{|\mathcal{R}|} \frac{1}{N_{(1)}^{1 + 2\alpha - 2\varepsilon_1}},
     \qquad \forall \varepsilon_1 > 0, 
\end{align}
independently of the choice of $u_i \in \{ R, D(\tau_i) \}$, of $\ell \in \{1,\ldots,11\}$,  of $\mu$,  of $\{\tau_i\}_{i \in \mathcal{D}}$ and of $N$. 
\end{prop}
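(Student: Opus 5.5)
We prove \eqref{eq:gralbound} term by term in $\ell \in \{1,\ldots,11\}$, using the random tensor framework of \cite{DNY2022}. The key observation is that $u_{(1)} = R$: the highest frequency carries a Gaussian coefficient $g_{n_{(1)}}/\langle n_{(1)}\rangle^{1+\alpha}$. Summing over it should gain square-root cancellation, i.e.\ a factor $N_{(1)}^{-1/2}$ beyond what counting alone gives, and this is exactly the $1/2$ in the exponent $1+2\alpha$.

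For each $\ell$ we regard the multilinear expression as a tensor contracted with the inputs. For the generic nonresonant part $\mathcal Q_1$, fix the set $\mathcal R$ of random slots and the set $\mathcal D$ of deterministic slots, and write
\[
\mathcal Q_1(u_1,\ldots,u_5)_n=\sum_{n_{\mathcal R}} h_{n\,n_{\mathcal R}}\prod_{k\in\mathcal R}\frac{g_{n_k}^{\iota_k}}{\langle n_k\rangle^{1+\alpha}},
\qquad
h_{n n_{\mathcal R}}=\sum_{n_{\mathcal D}}\mathbbm 1_{\Omega=\mu}\,\mathbbm 1_{n=\sum\pm n_j}\prod_{j\in\mathcal D}D_{n_j}(\tau_j)^{\iota_j}.
\]
In the set $\mathcal A_1$ there are no pairings between $g$ and $\overline g$, since $n_1,n_3,n_5\neq n_2,n_4$. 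The multilinear Gaussian tensor estimate therefore gives, with probability $\ge 1-\varepsilon$ and up to $N_{(1)}^{\varepsilon_1}$ losses and $(\log 1/\varepsilon)^{|\mathcal R|/2}$,
\[
\|\mathcal Q_1\|_{\ell^2_n}\lesssim \max_{(B,C)}\|h\|_{n_B\to n_C}\prod_{k\in\mathcal R}N_k^{-1-\alpha},
\]
where the maximum runs over partitions of $\mathcal R\cup\{n\}$. The deterministic factors are handled by $\|\langle n\rangle^{s}D_n\|_{\ell^2}=1$ from \eqref{eq:Bound_Dn}, and by Cauchy--Schwarz we absorb $D_{n_j}$ into an operator norm. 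This reduces everything to lattice point counting: the relevant operator norms are controlled by the number of solutions of $n=n_1-n_2+n_3-n_4+n_5$, $\Omega=\mu$, with some variables fixed. For this we use the standard 2D counting bounds: the number of $n_a,n_b$ with $n_a\pm n_b$ fixed and $|n_a|^2\pm|n_b|^2$ fixed is $\lesssim N^{\varepsilon_1}$ (divisor bound), or $\lesssim N$ for a circle or line piece in a box of size $N$. To make the uniformity in $\mu$, $\tau_j$ and $N$ hold, we take a union bound over the $N^{O(1)}$ dyadic and $\mu$ choices. This works because the $\tau_j$ enter only through $D$, which is normalized, and the probabilistic constant grows only logarithmically.

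The resonant pieces $\mathcal Q_2,\ldots,\mathcal Q_{11}$ involve pairings such as $n_4=n_5$. Here the paired factors $|g_{n}|^2/\langle n\rangle^{2+2\alpha}$ (or the mass $M$ and $\mathcal P_3(u)$) are bounded directly: for instance $\sum_n|g_n|^2\langle n\rangle^{-2-2\alpha}\lesssim\log(1/\varepsilon)$ almost surely, and $\sup_n |g_n|\lesssim(\log n\,\log 1/\varepsilon)^{1/2}$. Each resonant piece then reduces to a cubic or linear random tensor of the type already treated for $\mathcal W_3$, in the cubic Wick setting of \cite{CLS2020} and \cite{NahmodStaffilani2015}. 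The fully diagonal pieces $\mathcal Q_7$ to $\mathcal Q_{11}$ have essentially one free sum and are bounded by hand, using the decay $N_{(1)}^{-1-\alpha}$ of the top coefficient and $|\{n_2:n_2+n_4=2n\}|$-type counts.

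We expect the main obstacle to be the case analysis in $\mathcal Q_1$ when only $u_{(1)}$ is random and $N_{(2)}\simeq N_{(1)}$ is deterministic with only $H^{s}$, $s>1/2$, control. There one must choose the partition $(B,C)$ so that the counting bound of order $N_{(1)}^{1+}$ (a circle in a box) pairs with the gain $N_{(1)}^{-1-\alpha}$ and the $\ell^2$ normalization of $D$ costs at most $N_{(2)}^{-s}$. We would first try the splitting $B=\{n_{(1)}\}$, $C=\{n\}$ with the $\mathcal D$ variables summed by Schur's test. If that falls short, we would use the $TT^*$ refinement of \cite{DNY2022}, which is precisely where the threshold $\sigma<1/2$ appears.
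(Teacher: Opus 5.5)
Your overall architecture matches the paper's. You split into $\mathcal Q_1,\dots,\mathcal Q_{11}$; for $\mathcal Q_1$ you use the random tensor estimate with Schur's test and Bourgain's counting; for $\mathcal Q_2$--$\mathcal Q_6$ you bound the paired factors pointwise and reduce to the Wick cubic $\mathcal W_3$; and you bound the diagonal terms directly. However, the central probabilistic step is set up incorrectly. You place $\prod_{j\in\mathcal D}D_{n_j}(\tau_j)^{\iota_j}$ inside the base tensor $h_{nn_{\mathcal R}}$ and apply the Gaussian tensor estimate to that tensor. Theorem~\ref{thm:Random_Tensor} requires a deterministic base tensor (independent of the $g_n$), and the good event it produces depends on that tensor. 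Here $D$ is an arbitrary element of the unit ball of $X^{s,b}$, and $\tau_j$ ranges over $\mathbb R$. In the application $D$ is the fixed point $D_N$, which is a function of $\omega$. Theorem~\ref{thm:to-pconv} needs a single event $A_\varepsilon$ that works for all such $D$ at once. A union bound over $N^{O(1)}$ values of $\mu$ and dyadic scales cannot cover a continuum of $D$'s, and the normalization \eqref{eq:Bound_Dn} does not make the event independent of $D$. The missing idea is to keep $D$ out of the tensor. Form the random kernel $K_{nn_{\mathcal D}}=\sum_{n_{\mathcal R}}h^b_\ell\prod_{k\in\mathcal R}g_{n_k}^{\iota_k}\langle n_k\rangle^{-1-\alpha}$ from the indicator tensor $h^b_\ell$, which depends only on $\mu$ and the dyadic sizes. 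Bound $\lVert K\rVert_{n_{\mathcal D\setminus\mathcal D'}\to nn_{\mathcal D'}}$ with Theorem~\ref{thm:Random_Tensor}. Only then insert $D$, via $\lVert T_\ell(\prod_j D_{n_j})\rVert_{\ell^2_n}\le\lVert K\rVert_{n_{\mathcal D\setminus\mathcal D'}\to nn_{\mathcal D'}}\prod_j\lVert D_{n_j}\rVert_{\ell^2}\lesssim\lVert K\rVert_{n_{\mathcal D\setminus\mathcal D'}\to nn_{\mathcal D'}}\prod_jN_j^{-s}$. The event is then uniform in $D$ and $\tau_j$, and a union bound handles $\mu$ and $N$.

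Second, the freedom you intend to use is in the wrong place. In Theorem~\ref{thm:Random_Tensor} the maximum runs over \emph{all} partitions $X\dot\cup Y$ of the random indices, with $n$ always on the output side, so you cannot choose $B=\{n_{(1)}\}$. What you can choose is which deterministic variables to move to the output side by Cauchy--Schwarz (the set $\mathcal D'$), and that choice is essential. Take $RDDDD$ with all of $n_{\mathcal D}$ on the input side, and consider the partition with $n_1$ also on the input. Then $\lVert h^b_1\rVert^2_{n_1\cdots n_5\to n}\lesssim N_{(2)}N_{(3)}^{1+\varepsilon_1}N_{(4)}^2N_{(5)}^2$, which leads to $N_{(1)}^{-2-2\alpha}N_{(2)}^{1-2s}N_{(3)}^{1-2s}(N_{(4)}N_{(5)})^{2-2s}$. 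For $s$ close to $1/2$ and $N_{(2)}\sim\dots\sim N_{(5)}\sim N_{(1)}^{3/4}$, this exceeds the target $N_{(1)}^{-1-2\alpha}$ by roughly $N_{(1)}^{1/2}$. Sharper counting does not rescue it: this norm equals the largest row count, and by pigeonholing over the values of $\Omega$ that count is $\gtrsim N_{(2)}^7/N_{(1)}$ for some $\mu$. Your ``first try'' (top random index on the input, $\mathcal D$ variables summed by Schur) is exactly this configuration. The paper instead moves the smallest deterministic variable (say $n_4$) to the output side. The $(n,n_4)$ count is then a circle costing $N_4^{\varepsilon_1}$, and the remaining count no longer involves $N_1$. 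The factor $N_1$ appears only in the complementary partition, where for fixed $n_4$ the pair $(n,n_1)$ lies on a line, not a circle. Relatedly, the case you single out as the main obstacle ($u_{(2)}=D$ with $N_{(2)}\simeq N_{(1)}$) does not arise here, because it is disposed of beforehand by Proposition~\ref{prop:Qsmoothing-mest}. The tight cases are $RDDDD$ and $DRDDD$ with all $D$'s at lower frequency. Nor does $\sigma<1/2$ enter this proposition. Only $s=\alpha+\sigma\ge 1/2$ is used, through the factors $(\cdot)^{2\alpha+2\sigma-1}$, and $\sigma<1/2$ appears only when $N^{2s}N_{(1)}^{-1-2\alpha}$ is summed afterwards. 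So no $TT^*$ refinement is needed.
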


Indeed, if Proposition~\ref{prop:Final_Estimate} holds and recalling $s = \alpha + \sigma$, the left hand-side of \eqref{eq:gralbound_0} is bounded, up to the $\log(1/\varepsilon)^{|\mathcal R|}$ factor, by
\begin{align}
    \sum_N N^{2s + 4\beta'} 
    \bigg( \sum_{\substack{N_{(1)} \geq \ldots \geq N_{(5)} \\ N_i \geq M_i }} \frac{1}{N_{(1)}^{1/2 + \alpha - \varepsilon_1}}  \bigg)^2
    \lesssim 
    \sum_N \frac{N^{2s + 4\beta'}}{N^{1 + 2\alpha - 4\varepsilon_1}}
    \bigg( \sum_{\substack{N_{(1)} \geq \ldots \geq N_{(5)} \\ N_i \geq M_i }} \frac{1}{N_{(1)}^{\varepsilon_1}}  \bigg)^2
    \lesssim \prod_{j \in \mathcal R} \frac{1}{\langle M_j \rangle^{2\varepsilon_1/5}}
\end{align}
as long as
\begin{align}
    1 + 2\alpha - 4\varepsilon_1 - 2s - 4\beta' 
    = 1 - 2\sigma - 4\varepsilon_1 - 4\beta'
    > 0 
    \qquad \Longleftrightarrow \qquad 
    \sigma < \frac12 - 2 \varepsilon_1 - 2\beta'. 
\end{align}
Since $\varepsilon_1$ and $\beta'$ can be made arbitrarily small, the result holds for any $\sigma < 1/2$.

\begin{rmk}
The proof of Proposition~\ref{prop:Final_Estimate} shows that \eqref{eq:gralbound} is bounded by $N_{(1)}^{-1-2\alpha+2\varepsilon_1} N_{(2)}^{-(2\sigma - 1 + 2\alpha)}$. What is more, except in the cases $RDDDD$ and $DRDDD$ without pairings (which are computed in Sections \ref{subsec:rdddd-nopairing} and \ref{subsec:drddd-nopairing}), we obtain a better decay of $N_{(1)}^{-1-2\alpha+2\varepsilon_1} N_{(2)}^{-1} N_{(3)}^{-(2\sigma - 1 + 2\alpha)}$.
\end{rmk}

We prove Proposition~\ref{prop:Final_Estimate} in Section~\ref{sec:proof-smoothing}, for which we will use random tensor estimates.

\subsection{Random tensor estimates}
\label{subsec:rt}

Let $A$ and $B$ be finite index sets, and let $r \in \mathbb{N}$. For a deterministic base tensor 
\begin{align}
    h^b : (\mathbb Z^d)^r \times  (\mathbb Z^d)^{|A|} \times (\mathbb Z^d)^{|B|} \to \mathbb R, 
    \qquad \qquad h^b = h^b_{n_1\ldots n_rn_An_B}, 
\end{align}
and for $(g_n^{\omega})_{n \in \mathbb Z^d}$ i.i.d. standard Gaussian random variables, define the random tensor
    \begin{equation}\label{eq:randomop-gral}
        H_{n_An_B} = \sum_{n_1, \ldots, n_r} h^b_{n_1\ldots n_rn_An_B} \,  g_{n_1}^{\iota_1} \ldots g_{n_r}^{\iota_r}, 
    \end{equation}
where $\iota_1, \ldots, \iota_r \in \{+,-\}$ and $g_i^+ = g_i$ and $g_i^- = \overline{g_i}$. 
This random tensor will act as the kernel of an operator $T: \ell^2_{n_A} \to \ell^2_{n_B}$.
The following estimate from \cite[Proposition 4.14]{DNY2022} gives an estimate for its operator norm. We present here a slightly adapted version, following \cite{Kaneshiro2025}.

\begin{thm}[Random tensors estimate]\label{thm:Random_Tensor}
    Let $N \geq 1$ and $\varepsilon_1 > 0$. Assume that on the support of the base tensor $h^b$ we have 
    \begin{equation}
        \max_{a \in A}|n_a|, \,  \max_{b \in B} |n_b| \lesssim N, 
        \qquad \text{ and } \qquad 
        \max_{i=1, \ldots, r}|n_i| \lesssim N. 
    \end{equation}
    Assume also that $h^b$ does not allow pairings, that is, 
    \begin{equation*}
        n_i = n_j
        \quad \text{ and } \quad 
        \iota_i \neq \iota_j
        \quad \text{ for some } i \neq j
        \qquad \Longrightarrow \qquad h_{n_1\ldots n_rn_An_B} = 0. 
    \end{equation*}
    Then, for the random operator $T: \ell^2_{n_A} \to \ell^2_{n_B}$ defined by 
    \begin{align*}
        T(c_{n_A})_{n_B} 
        = \sum_{n_A} H_{n_An_B} c_{n_A}
        = \sum_{n_A} \Big( \sum_{n_1, \ldots, n_r} h^b_{n_1\ldots n_r n_A n_B} \, g_{n_1}^{\iota_1} \ldots g_{n_r}^{\iota_r} \Big)\, c_{n_A} 
        ,
    \end{align*}
    we have
    \begin{align}
        \mathbb E \left[\lVert T \rVert_{n_A \to n_B}^p
        \right]^{1/p}
        \lesssim
        p^{r/2} \, (\log N)^{r/2} \, 
        \max_{X \dot\cup Y = \{1, \ldots, r\}} \lVert h \rVert_{n_An_X \to n_Bn_Y}
    \end{align}
    for all $p \geq 1$, 
    where $\{X, Y\}$ is a partition of $\{1, \ldots, r\}$
    and where the norm $\lVert h \rVert_{n_An_X \to n_Bn_Y}$ refers to the operator norm of the deterministic operator $T_D:\ell^2_{n_An_X} \to \ell^2_{n_Bn_Y}$ defined by 
    \begin{equation*}
        T_D(c_{n_An_X})_{n_Bn_Y} = \sum_{n_A, \,  n_X} h_{n_Xn_Yn_An_B} \, c_{n_An_X}
        = \sum_{n_A, \,  n_X} h_{n_1 \ldots n_r n_An_B} \, c_{n_An_X}.
    \end{equation*}
\end{thm}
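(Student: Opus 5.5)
We would prove this by induction on the number $r$ of Gaussian factors. Each step peels off one Gaussian family using a matrix-valued Gaussian (noncommutative Khintchine, or matrix Bernstein) inequality. The logarithmic loss comes from the dimension of the ambient finite-dimensional spaces, which is $\lesssim N^{C}$ by the support assumption. The moments come from Gaussian hypercontractivity.

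\textbf{Step 1: localization.} By the support hypothesis, every index $n_a,n_b,n_1,\dots,n_r$ ranges over a set of cardinality $\lesssim N^{d}$. Hence $T$ is a random matrix of dimension $\lesssim N^{C(|A|+|B|)}$, and every auxiliary operator below acts between spaces of polynomial dimension in $N$.

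\textbf{Step 2: decoupling.} Since $h^b$ allows no pairings, $H$ is (for fixed $n_A,n_B$) a homogeneous Wiener chaos of order $r$ with no diagonal terms. A multilinear decoupling inequality (de la Pe\~na--Montgomery-Smith type, applied to the Banach-space-valued chaos $\omega\mapsto T$ with the operator norm) allows us to replace $g^{\iota_1}_{n_1}\cdots g^{\iota_r}_{n_r}$ by $g^{(1)\iota_1}_{n_1}\cdots g^{(r)\iota_r}_{n_r}$, with $r$ independent copies, at the cost of a constant $C_r$ in all $L^p$ moments. We expect this to be the main obstacle, for two reasons. First, one must check that the no-pairing condition is exactly what makes the decoupled and coupled chaoses comparable: pairings $n_i=n_j$ with $\iota_i\ne\iota_j$ produce nonzero expectations $\mathbb E|g|^2$, which would break the argument. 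Second, the complex conjugations must be handled; we would split $g=\operatorname{Re}g+i\operatorname{Im}g$ into real Gaussians to reduce to the real case.

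\textbf{Step 3: induction on $r$.} Condition on $g^{(2)},\dots,g^{(r)}$. Then
\[
T=\sum_{n_1} g^{(1)\iota_1}_{n_1}\, T_{n_1}, \qquad (T_{n_1})_{n_An_B}=\sum_{n_2,\dots,n_r} h^b_{n_1\dots n_r n_A n_B}\prod_{i\ge 2} g^{(i)\iota_i}_{n_i}.
\]
The matrix Gaussian series bound in finite dimension $D\lesssim N^{C}$ gives
\[
\mathbb E_{g^{(1)}}\lVert T\rVert^p \lesssim \big(p\log D\big)^{p/2}\max\Big(\big\lVert \textstyle\sum_{n_1}T_{n_1}T_{n_1}^*\big\rVert^{1/2},\big\lVert \textstyle\sum_{n_1}T_{n_1}^*T_{n_1}\big\rVert^{1/2}\Big)^p .
\]
The two quantities in the maximum are precisely the operator norms of the random tensor $(T_{n_1})$ viewed as a map $\ell^2_{n_A}\to\ell^2_{n_Bn_1}$ (the case $1\in Y$) and as a map $\ell^2_{n_An_1}\to\ell^2_{n_B}$ (the case $1\in X$). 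Each of these is again a random tensor of the same form with $r-1$ Gaussian factors and enlarged index sets $A\cup\{1\}$ or $B\cup\{1\}$. It still has no pairings and still satisfies the support bounds. The induction hypothesis therefore bounds its $p$-th moment by $p^{(r-1)/2}(\log N)^{(r-1)/2}\max_{X'\dot\cup Y'}\lVert h\rVert$ over partitions of $\{2,\dots,r\}$.

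\textbf{Step 4: closing the induction.} Take expectations via Fubini and combine the two branches. Together they exhaust all partitions $X\dot\cup Y=\{1,\dots,r\}$, and we obtain the bound $p^{r/2}(\log N)^{r/2}\max_{X,Y}\lVert h\rVert_{n_An_X\to n_Bn_Y}$. The base case $r=0$ is trivial. The $p^{1/2}$ per level is exactly the Khintchine/hypercontractive growth of Gaussian moments, and $\log D\lesssim\log N$ gives the $(\log N)^{1/2}$ per level.
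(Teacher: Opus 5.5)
The paper gives no proof of this statement. It quotes it from \cite[Proposition 4.14]{DNY2022}, adapted as in \cite{Kaneshiro2025}. The argument behind that citation is the high-moment trace method:
\begin{itemize}
\item one bounds $\mathbb E\lVert T\rVert^{2m}\le \mathbb E\operatorname{tr}\big((TT^*)^m\big)$ and expands by the Isserlis formula into pairings of the $2mr$ Gaussian factors, spread over $2m$ copies of $h$ and $\overline h$;
\item the no-pairing hypothesis kills contractions inside a single copy (partial traces of $h$, which the norms in the statement do not control);
\item each remaining contraction is bounded by products of the norms $\lVert h\rVert_{n_An_X\to n_Bn_Y}$, and one counts the $\lesssim (Cmr)^{mr}$ pairing patterns, with $m\sim p+\log N$ absorbing the dimensional factor.
\end{itemize}
Your route is genuinely different: one decoupling, then an induction that strips off one Gaussian family at a time with the matrix Gaussian series inequality. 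After the repair below it is correct. Conditionally on $g^{(2)},\dots,g^{(r)}$, the two variance parameters $\lVert\sum_{n_1}T_{n_1}^*T_{n_1}\rVert^{1/2}$ and $\lVert\sum_{n_1}T_{n_1}T_{n_1}^*\rVert^{1/2}$ are exactly the norms $n_A\to n_Bn_1$ and $n_An_1\to n_B$. The dimension is $N^{O(1)}$, and each step costs $\sqrt{p\log N}$, which reproduces $p^{r/2}(\log N)^{r/2}$.

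Your argument is modular and avoids pairing combinatorics and tensor-contraction bounds. The trace method is self-contained (Wick's formula plus counting) and shows exactly where pairings would enter. State the induction for the decoupled tensor $\sum h\prod_j (g^{(j)}_{n_j})^{\iota_j}$, with independent families and arbitrary $h$. There the no-pairing condition plays no role, so your remark that the reduced tensor ``still has no pairings'' is beside the point. The coupled statement also cannot be applied literally to a tensor built from several independent families.

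Step 2 does not work as written. The hypothesis excludes only $n_i=n_j$ with $\iota_i\neq\iota_j$. Equal-sign coincidences are allowed, so terms like $g_n^2$ or $\overline{g_n}^{\,3}$ occur, and $H$ is not a chaos ``with no diagonal terms''. De la Pe\~na--Montgomery-Smith decoupling concerns sums over pairwise distinct indices, so it does not apply directly. Passing to real and imaginary parts does not help, since $g_n^2=\tfrac12(x_n^2-y_n^2+2ix_ny_n)$ contains real diagonal terms with nonzero means. Conjugation is not the difficulty: that theorem allows arbitrary kernels such as $z\mapsto z^{\iota_j}$.

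The right observation is this: since $\mathbb E[g_ng_n]=0$ and $\mathbb E[g_n\overline{g_n}]=1$, the no-pairing condition says exactly that every product $\prod_j g_{n_j}^{\iota_j}$ on the support of $h$ equals its Wick product. You can then conclude in either of two ways. One is to invoke decoupling for homogeneous Gaussian chaos in Wick form (Arcones--Gin\'e). The other is to write $g_n\overset{d}{=}m^{-1/2}\sum_{l=1}^m g_{n,l}$ with i.i.d.\ $g_{n,l}$ and argue as follows:
\begin{itemize}
\item Terms with a coincidence $(n_i,l_i)=(n_j,l_j)$ have $\iota_i=\iota_j$. By Wick orthogonality they are $O(m^{-1/2})$ in $L^2$, hence in $L^p$ by hypercontractivity, and in operator norm because the dimension stays fixed.
\item The rest is a sum over pairwise distinct indices, to which de la Pe\~na--Montgomery-Smith applies.
\item After decoupling, add back the coincidence terms, again $O(m^{-1/2})$. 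The result has the law of $\sum h\prod_j(g^{(j)}_{n_j})^{\iota_j}$; now let $m\to\infty$.
\end{itemize}
With this replacement the proof goes through.
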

\begin{rmk}
As a consequence of Theorem~\ref{thm:Random_Tensor} and Chebyshev's inequality, as well as the strategy in the proof of Lemma \ref{lemma:Strichartz_Prob_Uniform}, 
for any $\varepsilon > 0$ we have 
    \begin{align}\label{eq:Random_Tensor_pointwise}
        \lVert T \rVert_{n_A \to n_B} 
        \lesssim \Big(\log \frac{1}{\varepsilon}\Big)^{r/2} \, (\log N)^{r/2} \, 
        \max_{X \dot\cup Y = \{1, \ldots, r\}} \lVert h \rVert_{n_An_X \to n_Bn_Y}, 
        \qquad \forall N \in 2^{\mathbb N}, 
    \end{align}
with probability $\geq 1-\varepsilon$. 
\end{rmk}
In our case, to bound \eqref{eq:gralbound}, 
if we define our base tensor by
\begin{align}\label{eq:basetensor-def}
    h^b_\ell 
    = h^b_\ell(n_1, \ldots, n_5,n)
    = \prod_{i=1}^5 \mathbbm 1_{|n_i| \simeq N_i} \, \cdot \, \mathbbm 1_{|n| \simeq N}
    \mathbbm 1_{n_1 - n_2 + n_3 - n_4 + n_5 = n} \,\mathbbm 1_{\Omega = \mu} \, \mathbbm 1_{\mathcal{A}_\ell},
\end{align}
then we can write $\mathcal{Q}_\ell$ defined in \eqref{eq:Def_Q_ell_Abuse} as
\begin{align}\label{eq:Our_Quantity_As_A_Tensor}
    \mathcal{Q}_\ell(u_1, \ldots, u_5)_n
    = \sum_{n_1, \ldots, n_5} h^b_j \, \,  (u_1)_{n_1} \overline{(u_2)_{n_2}} (u_3)_{n_3} \overline{(u_4)_{n_4}} (u_5)_{n_5}  
    & = \sum_{n_\mathcal{D}} \bigg( \sum_{n_\mathcal{R}} h^b_\ell \prod_{k \in \mathcal{R}} R_{n_k}  \bigg) \prod_{j \in \mathcal{D}} D_{n_j} \nonumber \\
    & = T_\ell \Big( \prod_{j \in \mathcal D} D_{n_j} \Big),
\end{align}
where $T_\ell : \ell^2_{n_{\mathcal{D}}} \rightarrow \ell^2_n$ is the random operator defined as
\begin{align}\label{eq:Kernel_RT}
    T_\ell (a_{n_{\mathcal{D}}})_n =
    \sum_{n_\mathcal{D}} K^\ell_{n n_{\mathcal{D}}} \, a_{n_{\mathcal D}}, 
    \qquad \text{ with kernel } \qquad 
    K^\ell_{n n_{\mathcal{D}}} 
    = \sum_{n_\mathcal{R}} h^b_\ell \prod_{k \in \mathcal{R}} \frac{g_{n_k}^{\omega}}{\langle n_k \rangle^{1 + \alpha}}.
\end{align}
Then, since the norms of the $D$ components are controlled in \eqref{eq:Bound_Dn}, we can bound
\begin{align}\label{eq:btensor-intro}
    \sum_{|n| \simeq N}  \big| \mathcal{Q}_\ell(u_1, \ldots, u_5)_n \big|^2 
    = \Big\lVert T_\ell \Big( \prod_{j \in \mathcal D} D_{n_j} \Big) \Big\rVert_{\ell_n^2}^2 
    \leq 
    \left\| 
    T_\ell\right\|_{n_{\mathcal{D}} \rightarrow n}^2  \, 
    \prod_{j \in \mathcal{D}} \|D_{n_j}\|_{n_j}^2
    \lesssim 
    \left\| 
    T_\ell\right\|_{n_{\mathcal{D}} \rightarrow n}^2  \, 
    \prod_{j \in \mathcal{D}} \frac{1}{N_j^{2s}}, 
\end{align}
and if the base tensor $h^b_\ell$ has no pairings (which is determined by $\mathbbm 1_{\mathcal A_\ell}$), 
by Theorem~\ref{thm:Random_Tensor} we have
\begin{align} 
    \left\| 
    T_\ell\right\|_{n_{\mathcal{D}} \rightarrow n}^2 
    : = \big\| 
    K^\ell_{nn_{\mathcal D} } \big\|_{n_{\mathcal{D}} \rightarrow n}^2
    \lesssim 
    \Big( \log \frac{1}{\varepsilon} \Big)^{|\mathcal R|} \, 
    N_{(1)}^{\varepsilon_1} \, 
    \prod_{k \in \mathcal{R}} \frac{1}{ N_{k}^{2 + 2\alpha}}  \, 
    \max_{X \dot\cup Y = \mathcal R} 
    \| h^b_\ell \|_{n_{\mathcal{D}} n_X \rightarrow n n_Y}^2. 
\end{align}
We will then bound the operator norms of the deterministic base tensor $h_\ell^b$ by the Schur test in Lemma~\ref{lemma:schur-test}.  
What is more, for any $\mathcal{D}' \subsetneq \mathcal{D}$, by Cauchy-Schwarz in $n_{\mathcal D'}$ we can bound
\begin{align}
    T_\ell \Big( \prod_{j \in \mathcal D} D_{n_j} \Big) 
    \leq 
    \sum_{n_{\mathcal D'}} 
    \prod_{j \in \mathcal D'} |D_{n_j}|^2 
    \cdot 
    \sum_{n_{\mathcal D'}}
    \Big| 
    \sum_{n_{D \setminus \mathcal D'}} K^\ell_{nn_{\mathcal D}} \prod_{s \in \mathcal D \setminus \mathcal D'} D_{n_s} 
    \Big|^2, 
\end{align}
and therefore, by Theorem~\ref{thm:Random_Tensor} we get
\begin{align}
    \Big\lVert T_\ell \Big( \prod_{j \in \mathcal D} D_{n_j} \Big) \Big\rVert_{\ell_n^2}^2 
    & \leq 
    \prod_{j \in \mathcal D'} \lVert D_{n_j}\rVert_{n_j}^2
    \cdot 
    \sum_{n, n_{\mathcal D'}}
    \Big| 
    \sum_{n_{D \setminus \mathcal D'}} K^\ell_{nn_{\mathcal D}} \prod_{s \in \mathcal D \setminus \mathcal D'} D_{n_s} 
    \Big|^2 \\
    & \leq
    \prod_{j \in \mathcal D} \lVert D_{n_j}\rVert_{n_j}^2
    \cdot 
    \lVert K^\ell_{nn_{\mathcal D}} \rVert_{n_{\mathcal D \setminus \mathcal D'} \to nn_{\mathcal D'}}^2.
\end{align}
This shows that, as long as there are no pairings, 
bounding \eqref{eq:To_Bound} will be reduced 
to bounding 
\begin{align}
    \bigg\| \sum_{n_\mathcal{R}} h^b_\ell \prod_{k \in \mathcal{R}} g_{n_{k}}^{\omega} \bigg\|_{n_{\mathcal{D} \setminus \mathcal{D}'} \rightarrow n n_{\mathcal{D}'}},
\end{align}
for the most convenient choice of $\mathcal D' \subset \mathcal D$.

\section{Proof of Proposition~\ref{prop:Final_Estimate}}
\label{sec:proof-smoothing}

Recall that, thanks to Proposition \ref{prop:Qsmoothing-mest}, we are restricted to $u_i = R$ whenever $N_i \simeq N_{(1)}$.
We bound each $\mathcal{Q}_\ell$ in \eqref{eq:qnl-fcoeff} separately. To lighten notation, we denote $g_n = g_n^{\omega}$.

\subsection{No pairings: $\boldsymbol{\mathcal{Q}_1}$}
We study different choices of $u_i \in \{R,D\}$ separately.
By symmetry, we can assume that $N_{(1)} = N_1$ or $N_{(1)} = N_2$.

\subsubsection{Case $RRRRR$}
\label{sec:RRRRR}
By denoting $h_1^b$ as in \eqref{eq:basetensor-def}, 
we compute the expectation of \eqref{eq:gralbound} as follows:
    \begin{equation}\label{eq:forexample-ghs}
     \begin{split}
         \mathbb E  \sum_{|n| \simeq N}  \big| \mathcal{Q}_1(u_1, \ldots, u_5)_n \big|^2
         & = \mathbb E \sum_{|n| \simeq N} \bigg| \sum_{ n_1, \ldots, n_5 } h^b_1\frac{g_{n_1} \overline{g_{n_2}}  g_{n_3} \overline{g_{n_4}} g_{n_5} } {( \langle n_1 \rangle \langle n_2 \rangle \langle n_3 \rangle \langle n_4 \rangle \langle n_5 \rangle )^{1 + \alpha}} \bigg|^2   \\
         & \hspace{-2cm} = \sum_{|n| \simeq N} \sum_{ \substack{ n_1, \ldots, n_5 \\ n_1', \ldots , n_5'}}  h^b_1(n_1, \ldots, n_5, n) \, h^b_1(n_1', \ldots, n_5',n) \,  \frac{\mathbb E \big[ g_{n_1} \overline{g_{n_2}}  g_{n_3} \overline{g_{n_4}} g_{n_5} \overline{g_{n_1'}} g_{n_2'} \overline{g_{n_3'}} g_{n_4'} \overline{g_{n_5'}} \big] } { \prod_{j=1}^5 ( \langle n_j \rangle \langle n_j' \rangle )^{1 + \alpha}}  \\
         & \hspace{-2cm}  \lesssim  \frac{1}{(N_1N_2N_3N_4N_5)^{2+2\alpha}} \sum_n \sum_{ n_1, \ldots, n_5} h^b_1, 
     \end{split}
\end{equation}
since by the Wick-Isserlis theorem the expectation cancels unless $\{n_1, n_3, n_5 \} = \{n_1', n_3', n_5'\}$ and $\{n_2, n_4\} = \{n_2', n_4'\}$.
Then, the counting in Lemma~\ref{lemma:combest-nopairings-cubic} gives 
\begin{align}
    \sum_n \sum_{ n_1, \ldots, n_5} h^b_1
    \lesssim N_{(1)} N_{(2)}^{1+\varepsilon_1}(N_{(3)}N_{(4)}N_{(5)})^2, 
\end{align}
and therefore 
\begin{align}
    \mathbb E  \sum_{|n| \simeq N}  \big| \mathcal{Q}_1(u_1, \ldots, u_5)_n \big|^2
    \lesssim \frac{N_{(2)}^{\varepsilon_1}}{ N_{(1)}^{1 + 2\alpha} N_{(2)}^{1 + 2\alpha} (N_{(3)}N_{(4)}N_{(5)})^{2\alpha}}. 
\end{align}
Hence, by the Wiener chaos estimate in Proposition \ref{lemma:wiener-chaos}, with probability at least $1-\varepsilon$ we get 
\begin{align}\label{eq:wiener-app1}
    \sum_{|n| \simeq N}  \big| \mathcal{Q}_1(u_1, \ldots, u_5)_n \big|^2
    \lesssim \Big(\log \frac{1}{\varepsilon} \Big)^5  \frac{N_{(1)}^{2\varepsilon_1}}{N_{(1)}^{1 + 2\alpha}  N_{(2)}^{1 + 2\alpha} (N_{(3)}N_{(4)}N_{(5)})^{2\alpha} }.
\end{align}

\subsubsection{Case $RDDDD$}\label{subsec:rdddd-nopairing}
The quantity to bound from \eqref{eq:gralbound} is 
\begin{align}\label{eq:No_Pairings_ToBound_RDDDD}
    \sum_{|n| \simeq N}  \big| \mathcal{Q}_1(u_1, \ldots, u_5)_n \big|^2
    = 
    \sum_{|n|\simeq N} \bigg| \sum_{n_1, \ldots, n_5 } h^b_1 \frac{g_{n_1}}{\langle n_1 \rangle^{1+\alpha}} \overline{D_{n_2}} D_{n_3} \overline{D_{n_4}} D_{n_5} \bigg|^2.
\end{align}
In this case, $N_{(1)} = N_1$, and by symmetry we assume $N_3 \geq N_5$ and $N_2 \geq N_4$ without loss of generality. 
Separate $n_4$ by Cauchy-Schwarz and apply the random tensor estimate in Theorem~\ref{thm:Random_Tensor} to get
\begin{align}\label{eq:No_Pairings_ToBound_RDDDD_2}
    \eqref{eq:No_Pairings_ToBound_RDDDD}
    &\lesssim 
    \sum_{n, n_4} \bigg| \sum_{n_2,n_3,n_5}  \bigg( \sum_{n_1} h^b_1 \frac{g_{n_1}}{\langle n_1 \rangle^{1+\alpha}} \bigg) \overline{D_{n_2}} D_{n_3} D_{n_5}  \bigg|^2
    \, \sum_{n_4} |D_{n_4}|^2 \nonumber \\
    & \lesssim 
    \Big\lVert  \sum_{n_1} h^b_1 \frac{g_{n_1}}{\langle n_1 \rangle^{1+\alpha}}  \Big\rVert_{n_2n_3n_5 \to nn_4}^2 \, 
    \prod_{i \neq 1} \lVert D_{n_i} \rVert_{n_i}^2 \nonumber \\
    & \lesssim \Big( \log \frac{1}{\varepsilon} \Big)
    \frac{N_{(1)}^{\varepsilon_1}}{N_1^{2+2\alpha} \, (N_2N_3N_4N_5)^{2\alpha + 2\sigma}} \, \max\{  \lVert  h^b_1 \rVert_{n_1n_2n_3n_5 \to nn_4}^2, \lVert  h^b_1 \rVert_{n_2n_3n_5 \to nn_1n_4}^2  \}. 
\end{align}
where $\lVert D_{n_i} \rVert_{n_i}^2 \lesssim N_i^{-2\alpha - 2\sigma}$ by \eqref{eq:Bound_Dn}. 
On the other hand, 
by the Schur test \ref{lemma:schur-test} and the counting Lemma \ref{lemma:combest-nopairings-cubic}, the deterministic norms of the base tensor $h_1^b$ are bounded by
\begin{align}\label{eq:Counting_RDDDD_4}
    \lVert  h^b_1 \rVert_{n_1n_2n_3n_5 \to nn_4}^2
    & \lesssim \Big(\sup_{n, n_4} \sum_{n_1, n_2, n_3, n_5} h^b_1  \Big)
    \, \Big( \sup_{n_1, n_2, n_3, n_5} \sum_{n, n_4} h^b _1\Big) \nonumber \\
    &\lesssim \operatorname{max}_{\{2,3,5\}} (\operatorname{mid}_{\{2,3,5\}})^{1+{\varepsilon_1}} (\operatorname{min}_{\{2,3,5\}})^2 \, \cdot \, N_4^{\varepsilon_1}, 
\end{align}
and 
\begin{align}\label{eq:Counting_RDDDD_5}
    \lVert  h^b_1 \rVert_{n_2n_3n_5 \to nn_1n_4}^2 
    &  \lesssim \Big(\sup_{n, n_1, n_4} \sum_{n_2, n_3, n_5} h^b_1  \Big)
    \, \Big( \sup_{n_2, n_3, n_5} \sum_{n, n_1, n_4} h^b _1\Big) \nonumber \\
    &\lesssim (\operatorname{mid}_{\{2,3,5\}})(\operatorname{min}_{\{2,3,5\}})^{1+{\varepsilon_1}}  \, \cdot \, N_1 \, N_4^{1+\varepsilon_1}.
\end{align}
Up to the $\log (1/\varepsilon)$ term, from \eqref{eq:Counting_RDDDD_4} we get the bound
\begin{align}
    \eqref{eq:No_Pairings_ToBound_RDDDD_2}
    \lesssim 
    N_{(1)}^{3\varepsilon_1} \,
    \frac{\operatorname{max}_{\{2,3,5\}} \operatorname{mid}_{\{2,3,5\}} \operatorname{min}_{\{2,3,5\}}^2 }{N_1^{2+2\alpha}(N_2N_3N_4N_5)^{2\alpha + 2\sigma}} 
    \lesssim 
    N_{(1)}^{3\varepsilon_1} \,
    \frac{\operatorname{min}_{\{2,3,5\}} 
    }{N_1^{2+2\alpha}N_4^{2\alpha + 2\sigma}(N_2N_3N_5)^{2 \alpha + 2\sigma - 1}}, 
\end{align}
while from from \eqref{eq:Counting_RDDDD_5} we get
\begin{align}
    \eqref{eq:No_Pairings_ToBound_RDDDD_2}
    \lesssim 
    \frac{N_{(1)}^{3\varepsilon_1}}{N_1^{1+2\alpha}  \, \operatorname{max}_{\{2,3,5\}} \, (N_2 N_3 N_4 N_5)^{2 \alpha + 2\sigma - 1} }
    = 
    \frac{N_{(1)}^{3\varepsilon_1}}{N_1^{1+2\alpha}  \, N_{(2)} \, (N_2 N_3 N_4 N_5)^{2 \alpha + 2\sigma - 1} },
\end{align}
since $N_4 \leq N_2$ implies $\operatorname{max}_{\{2,3,5\}} = N_{(2)}$.
The first bound is the worst if $N_4 \ll N_2, N_3, N_5$, in which case we get 
\begin{align}
    \eqref{eq:No_Pairings_ToBound_RDDDD_2}
    \lesssim 
    \Big( \log \frac{1}{\varepsilon} \Big) 
    \frac{ N_{(1)}^{3\varepsilon_1}}{N_{(1)}^{1+2\alpha}N_4^{2\alpha + 2\sigma}(N_2N_3N_5)^{2 \alpha + 2\sigma - 1}}.
\end{align}

\begin{rmk}\label{eq:rmk-haitian}
Even if this bound provides the required smoothing $\sigma < 1/2$, a better estimate by $(N_{(1)} N_{(2)})^{-1}$ is possible by a more refined analysis based on a dyadic Fourier localization of the ansatz $y_N = \Phi^N_t f^{\omega}- \Phi^{N/2}_t f^{\omega}$, in which case one can exploit the independence of $R_{n_1}$ and $D_{n_j}$ when $N_1 \gg N_j$. The same happens for the case $DRDDD$. We do not pursue this here and we refer to \cite{DNY2024} for details.
\end{rmk}

\subsubsection{Case $DRDDD$}\label{subsec:drddd-nopairing}
We now have
\begin{align}\label{eq:No_Pairings_ToBound_DRDDD}
    \sum_{|n| \simeq N}  \big| \mathcal{Q}_1(u_1, \ldots, u_5)_n \big|^2
    = 
    \sum_{|n|\simeq N} \bigg| \sum_{n_1, \ldots, n_5 } h^b_1 \frac{\overline{g_{n_2}}}{\langle n_2 \rangle^{1+\alpha}} D_{n_1} D_{n_3} \overline{D_{n_4}} D_{n_5} \bigg|^2, 
\end{align}
in which case we have $N_{(1)} = N_2$. By symmetry we assume $N_1 \geq N_3 \geq N_5$ without loss of generality. 
Proceeding analogously to $RDDDD$, with probability $\geq 1 - \varepsilon$ we bound 
\begin{align*}
    \eqref{eq:No_Pairings_ToBound_DRDDD}
    &\lesssim \Big( \log \frac{1}{\varepsilon} \Big)
    \frac{N_{(1)}^{\varepsilon_1}}{N_2^{2+2\alpha} \, (N_1N_3N_4N_5)^{2\alpha + 2\sigma}} \, \max\{  \lVert  h^b_1 \rVert_{n_1n_2n_3n_5 \to nn_4}^2, \lVert  h^b_1 \rVert_{n_1n_3n_5 \to nn_2n_4}^2  \}. 
\end{align*}
Using Lemma \ref{lemma:combest-nopairings-cubic}, we adapt the bounds \eqref{eq:Counting_RDDDD_4} and \eqref{eq:Counting_RDDDD_5} by
\begin{align}
    \lVert  h^b_1 \rVert_{n_1n_2n_3n_5 \to nn_4}^2
    & \lesssim N_1 N_3^{1+{\varepsilon_1}} N_5^2 \, \cdot \, N_4^{\varepsilon_1},  \label{eq:Counting_RDDDD_4_1} \\
    \lVert  h^b_1 \rVert_{n_1n_3n_5 \to nn_2n_4}^2  
    &\lesssim N_3^{\varepsilon_1} N_5^2  \, \cdot \, N_2^{\varepsilon_1} \, N_4^2. \label{eq:Counting_RDDDD_4_2}
\end{align}
From \eqref{eq:Counting_RDDDD_4_1}, we get
\begin{align*}
    \eqref{eq:No_Pairings_ToBound_DRDDD}
    \lesssim 
    \Big( \log \frac{1}{\varepsilon} \Big)
    \frac{N_{(1)}^{3\varepsilon_1} N_5 }{N_2^{2 + 2\alpha} N_4 (N_1 N_3 N_4 N_5)^{2 \alpha + 2\sigma - 1} } \lesssim \Big( \log \frac{1}{\varepsilon} \Big)
    \frac{N_{(1)}^{3\varepsilon_1}  }{N_{(1)}^{1 + 2\alpha} N_4 (N_1 N_3 N_4 N_5)^{2 \alpha + 2\sigma - 1} };
\end{align*}
while from \eqref{eq:Counting_RDDDD_4_2} we get
\begin{align}\label{eq:No_Pairings_ToBound_RDDDD_int1} 
    \eqref{eq:No_Pairings_ToBound_DRDDD}
    \lesssim 
    \Big( \log \frac{1}{\varepsilon} \Big)
    \frac{N_{(1)}^{3\varepsilon_1} N_4 N_5 }{N_2^{2 + 2\alpha} N_1 N_3 (N_1 N_3 N_4 N_5)^{2 \alpha + 2\sigma - 1} } \lesssim \Big( \log \frac{1}{\varepsilon} \Big)
    \frac{N_{(1)}^{3\varepsilon_1} }{N_{(1)}^{1 + 2\alpha} N_1 (N_1 N_3 N_4 N_5)^{2 \alpha + 2\sigma - 1} }.
\end{align}
Thus, with probability $\geq 1 - \varepsilon$ we get
\begin{align}
    \eqref{eq:No_Pairings_ToBound_DRDDD}
    \lesssim 
    \Big( \log \frac{1}{\varepsilon} \Big) 
    \frac{ N_{(1)}^{3\varepsilon_1}}{N_{(1)}^{1+2\alpha} \operatorname{min}_{\{1,4\}}  (N_1N_3N_4N_5)^{2 \alpha + 2\sigma - 1}},
\end{align}
which yields a worse decay than $(N_{(1)} N_{(2)})^{-1}$ whenever $N_1 \gg N_4$ or $N_1 \ll N_4$.

\subsubsection{Case $RRDDD$.}
For the first case with two $R$, we have
\begin{align}\label{eq:No_Pairings_ToBound_RRDDD}
    \sum_{|n| \simeq N}  \big| \mathcal{Q}_1(u_1, \ldots, u_5)_n \big|^2
    = 
    \sum_n \bigg| \sum_{n_1, \ldots, n_5 } h^b_1 \frac{g_{n_1}}{\langle n_1 \rangle^{1+\alpha}} \frac{\overline{g_{n_2}}}{\langle n_2 \rangle^{1+\alpha}} D_{n_3} \overline{D_{n_4}} D_{n_5} \bigg|^2.
\end{align}
In this case we have $N_{(1)} = \operatorname{max}_{\{1,2\}}$, 
and we can assume $N_3 \geq N_5$ by symmetry. By Cauchy-Schwarz we split the smallest frequency between $n_4$ and $n_5$.  

\begin{itemize}
    \item Subcase 1. If $N_4 \leq N_5$,  again by \eqref{eq:Bound_Dn} and Theorem~\ref{thm:Random_Tensor} we have 
\begin{align}
    \eqref{eq:No_Pairings_ToBound_RRDDD}
    & \lesssim 
    \sum_{n,n_4} \bigg| \sum_{n_3,n_5 } \bigg( \sum_{n_1,n_2} h^b_1 \,  \frac{g_{n_1} \overline{g_{n_2}}}{\langle n_1 \rangle^{1+\alpha} \langle n_2 \rangle^{1+\alpha}} \bigg) D_{n_3}  D_{n_5} \bigg|^2
    \sum_{n_4} |D_{n_4}|^2 \\
    & \lesssim  
    \Bigg \lVert \sum_{n_1,n_2 } h^b_1 \frac{g_{n_1}}{\langle n_1 \rangle^{1+\alpha}} \frac{\overline{g_{n_2}}}{\langle n_2 \rangle^{1+\alpha}} \Bigg \rVert_{n_3n_5 \to nn_4}^2 \, \frac{1}{(N_3N_4N_5)^{2\alpha + 2\sigma}} \\
    & 
    \lesssim \Big(\log \frac{1}{\varepsilon}\Big)^2
    \frac{N_{(1)}^{\varepsilon_1}}{(N_1N_2)^{2 + 2\alpha}} \frac{1}{(N_3N_4N_5)^{2\alpha + 2\sigma}}
    \max_{X \dot{\cup} Y = \{1,2\}} \big\{
    \lVert  h^b \rVert_{n_Xn_3n_5 \to n_Ynn_4}^2
    \big\}.
\end{align}
By the Schur test \ref{lemma:schur-test} and the counting Lemma \ref{lemma:combest-nopairings-cubic}, 
we have the following possibilities:
\hfill \break
\begin{enumerate}
    \item Since
    $\lVert  h^b \rVert_{n_1n_2n_3n_5 \to nn_4}^2
    \lesssim \operatorname{mid}_{1,\{1,2,3,5\}} \operatorname{mid}_{2,\{1,2,3,5\}}^{1+\varepsilon_1} \operatorname{min}_{\{1,2,3,5\}}^2 N_4^{\varepsilon_1}$, 
    we get the bound
    \begin{align}
     \eqref{eq:No_Pairings_ToBound_RRDDD} 
     &\lesssim \Big(\log \frac{1}{\varepsilon}\Big)^2 \frac{N_{(1)}^{3\varepsilon_1} \, \min_{\{1,2,3,5\}} }{N_{(1)} (N_1N_2)^{1 + 2\alpha}N_4^{2\alpha + 2\sigma} (N_3N_5)^{2 \alpha + 2\sigma - 1}}
     \\
     &\lesssim 
     \Big(\log \frac{1}{\varepsilon}\Big)^2 \frac{N_{(1)}^{3\varepsilon_1} }{N_{(1)}^{2 + 2\alpha} \min_{\{1,2\}}^{2\alpha} N_4^{2\alpha + 2\sigma} (N_3N_5)^{2 \alpha + 2\sigma - 1}},
    \end{align} 
    since $\max_{\{1,2,3,5\}} = N_{(1)}$, and $\operatorname{min}_{\{1,2,3,5\}}\leq \operatorname{min}_{\{1,2\}}$. 
    \item Given that $\lVert  h^b_1 \rVert_{n_1n_3n_5 \to nn_2n_4}^2
    \lesssim 
    \operatorname{mid}_{\{1,3,5\}}^{\varepsilon_1} \operatorname{min}_{\{1,3,5\}}^2 \cdot \operatorname{max}_{\{2,4\}}^{\varepsilon_1} \operatorname{min}_{\{2,4\}}^2 \leq N_{(1)}^{2\varepsilon_1} N_{(3)}^2 N_{(5)}^2$,
    which holds because either $\operatorname{min}_{\{1,3,5\}}$ or $\operatorname{min}_{\{2,4\}}$ is $N_{(5)}$, and moreover both are $\leq N_{(3)}$, we get
    \begin{align}    
    \eqref{eq:No_Pairings_ToBound_RRDDD} 
    & \lesssim \Big(\log \frac{1}{\varepsilon}\Big)^2
    \frac{N_{(1)}^{3\varepsilon_1} N_{(3)}^2 N_{(5)}^2 }{N_{(1)}^{2+2\alpha}  N_{(2)} N_{(3)} N_{(4)} N_{(5)} \min_{\{1,2\}}^{1+2\alpha}(N_3N_4N_5)^{2 \alpha + 2\sigma - 1} } \\
    & \lesssim \Big(\log \frac{1}{\varepsilon}\Big)^2
    \frac{N_{(1)}^{3\varepsilon_1}}{N_{(1)}^{2+2\alpha} \min_{\{1,2\}}^{1+2\alpha}(N_3N_4N_5)^{2 \alpha + 2\sigma - 1} }.
    \end{align}
    
    \item Since $\lVert  h^b_1 \rVert_{n_2n_3n_5 \to nn_1n_4}^2
    \lesssim
    \operatorname{mid}_{\{2,3,5\}} \operatorname{min}_{\{2,3,5\}}^{1+\varepsilon_1} \, \operatorname{max}_{\{1,4\}} \operatorname{min}_{\{1,4\}}^{1+\varepsilon_1}$, 
    we get
    \begin{align}
    \eqref{eq:No_Pairings_ToBound_RRDDD} &\lesssim \Big(\log \frac{1}{\varepsilon}\Big)^2 \frac{N_{(1)}^{3\varepsilon_1}}{N_{(1)}^{1 + 2\alpha} N_{(2)} \min_{\{1,2\}}^{1 + 2\alpha} (N_3N_4N_5)^{2 \alpha + 2\sigma - 1} }  
    \end{align}
    because $\max_{\{2,3,5\}} = N_{(2)}$. 

    \item Since $\lVert  h^b_1 \rVert_{n_3n_5 \to nn_1n_2n_4}^2
    \lesssim
    N_5^{\varepsilon_1} \, \max_{\{1,2,4\}} \operatorname{mid}_{\{1,2,4\}}^{1+\varepsilon_1}\operatorname{min}_{\{1,2,4\}}^2$, 
    we get
    \begin{align*}
    \eqref{eq:No_Pairings_ToBound_RRDDD} &\lesssim \Big(\log \frac{1}{\varepsilon}\Big)^2 \frac{N_{(1)}^{3\varepsilon_1} \,  \min_{\{1,2,4\}} }{(N_1N_2)^{1 + 2\alpha}(N_3N_5)^{2 \alpha + 2\sigma}N_4^{2 \alpha + 2\sigma - 1}}. 
    \end{align*}
    If $\min_{\{1,2,4\}} = N_4 \leq N_1, N_2$, since $N_4 \leq N_5 \leq N_3 $ we have $N_{(2)} = \max(N_3, \min_{\{1,2\}})$ 
    and therefore 
\begin{align}
    \eqref{eq:No_Pairings_ToBound_RRDDD} & \lesssim  \Big(\log \frac{1}{\varepsilon}\Big)^2 \frac{N_{(1)}^{3\varepsilon_1}}{N_{(1)}^{1 + 2\alpha} \min_{\{1,2\}}^{1 + 2\alpha} N_3 (N_3N_4N_5)^{2 \alpha + 2\sigma - 1 }}, 
\end{align}
which has a factor $N_{(1)}^{1+2\alpha} N_{(2)}$ in the denominator. 
Else, if $\min_{\{1,2,4\}} = N_2$, then $N_2 \leq N_4 \leq N_5 \leq N_3 \leq N_1$, and therefore
\begin{align}
    \eqref{eq:No_Pairings_ToBound_RRDDD}  \lesssim \Big(\log \frac{1}{\varepsilon}\Big)^2 \frac{N_{(1)}^{3\varepsilon_1} }{N_{(1)}^{1 + 2\alpha} N_{(2)} N_{(3)} N_2^{2\alpha} (N_3N_4N_5)^{2 \alpha + 2\sigma - 1}}.
\end{align}
If $\min_{\{1,2,4\}} = N_1$, the computation is symmetric and the same bound holds with replacing $N_2$ by $N_1$. 
\end{enumerate}

Thus, in all cases, with probability $\geq 1 - \varepsilon$ we get at least
\begin{equation}
    \eqref{eq:No_Pairings_ToBound_RRDDD} 
    \lesssim \Big(\log \frac{1}{\varepsilon}\Big)^2 \frac{N_{(1)}^{3\varepsilon_1}}{N_{(1)}^{1 + 2\alpha} N_{(2)}  (N_{(3)}N_{(4)}N_{(5)})^{2 \alpha + 2 \sigma - 1}}.
\end{equation}

\item Subcase 2. If $N_5 \leq N_4$, we split $n_5$ by Cauchy-Schwarz to get
\begin{align}
    \eqref{eq:No_Pairings_ToBound_RRDDD}
    & \lesssim  \Big(\log \frac{1}{\varepsilon} \Big)^2
    \frac{N_{(1)}^{\varepsilon_1}}{(N_1N_2)^{2 + 2\alpha} (N_3N_4N_5)^{2\alpha + 2\sigma}}
    \max_{X \dot{\cup} Y = \{1,2\}} \big\{
    \lVert  h^b \rVert_{n_Xn_3n_4 \to n_Ynn_5}^2
    \big\}, 
\end{align}
and we get the following bounds:
\begin{enumerate}
    \item Since $\lVert  h^b \rVert_{n_1n_2n_3n_4 \to nn_5}^2
    \lesssim \operatorname{mid}_{1,\{1,2,3,4\}} \operatorname{mid}_{2,\{1,2,3,4\}}^{1 + \varepsilon_1} \operatorname{min}_{\{1,2,3,4\}}^2 N_5$,  we bound by
    \begin{align*}
        \eqref{eq:No_Pairings_ToBound_RRDDD} &\lesssim \Big(\log \frac{1}{\varepsilon} \Big)^2
    \frac{N_{(1)}^{2\varepsilon_1} \operatorname{min}_{\{1,2,3,4\}}}{N_{(1)}^{2 + 2\alpha} \min_{\{1,2\}}^{1 + 2\alpha}(N_3N_4N_5)^{2 \alpha + 2\sigma - 1}} \\
    &\lesssim \Big(\log \frac{1}{\varepsilon} \Big)^2
    \frac{N_{(1)}^{2\varepsilon_1}}{N_{(1)}^{2 + 2\alpha} \min_{\{1,2\}}^{2\alpha}(N_3N_4N_5)^{2 \alpha + 2\sigma - 1}}.
    \end{align*}
    
    \item Given that $\lVert  h^b \rVert_{n_1n_3n_4 \to nn_2n_5}^2
    \lesssim 
    \operatorname{mid}_{\{1,3,4\}} \min_{\{1,3,4\}}^{1+\varepsilon_1} \, N_2 N_5^{1+\varepsilon_1}$, we get
    \begin{align*}
        \eqref{eq:No_Pairings_ToBound_RRDDD} \lesssim \Big(\log \frac{1}{\varepsilon} \Big)^2 \frac{N_{(1)}^{3\varepsilon_1}}{N_{(1)}^{1 + 2\alpha} \min_{\{1,2\}}^{1 + 2\alpha}\max_{\{1,3,4\}}(N_3N_4N_5)^{2 \alpha + 2\sigma - 1}}, 
    \end{align*}
    and observe that, since $N_5 \leq N_4$, 
    the factor $\max_{\{1,3,4\}}$ is either $N_{(1)}$ or $N_{(2)}$. 
    
    \item Since $\lVert  h^b \rVert_{n_2n_3n_4 \to nn_1n_5}^2
    \lesssim 
    \operatorname{mid}_{\{2,3,4\}} \operatorname{min}_{\{2,3,4\}}^{1+\varepsilon_1} \, N_1 N_5^{1+\varepsilon_1}$, we bound as
    \begin{align*}
        \eqref{eq:No_Pairings_ToBound_RRDDD} &\lesssim \Big(\log \frac{1}{\varepsilon} \Big)^2 \frac{N_{(1)}^{3\varepsilon_1}}{ (N_1 N_2)^{1 + 2\alpha} \max_{\{2,3,4\}}  (N_3N_4N_5)^{2 \alpha + 2\sigma - 1}},
    \end{align*}
    and observe that $N_5 \leq N_4$ implies that $\operatorname{max}_{\{2,3,4\}}$ is either $N_{(1)}$ or $N_{(2)}$. 
    
    \item Given that $\lVert  h^b \rVert_{n_3n_4 \to nn_1n_2n_5}^2
    \lesssim
    \operatorname{min}_{\{3,4\}} \, \operatorname{max}_{\{1,2,5\}} \operatorname{mid}_{\{1,2,5\}}^{1+\varepsilon_1} \operatorname{min}_{\{1,2,5\}}^2$, 
    we get 
    \begin{align*}
        \eqref{eq:No_Pairings_ToBound_RRDDD} \lesssim \Big(\log \frac{1}{\varepsilon} \Big)^2 \frac{N_{(1)}^{2\varepsilon_1} N_{(5)}}{ N_{(1)}^{1+2\alpha} \min_{\{1,2\}}^{1+2\alpha}\operatorname{max}_{\{3,4\}} \, (N_3N_4N_5)^{2 \alpha + 2\sigma - 1}}, 
    \end{align*}
    because $\min_{\{1,2,5\}} = N_{(5)}$. 
    Observe also that $\operatorname{min}_{\{1,2\}}\operatorname{max}_{\{3,4\}}$ contains a factor of $N_{(2)}$.
    Therefore, 
    \begin{align*}
        \eqref{eq:No_Pairings_ToBound_RRDDD} \lesssim \Big(\log \frac{1}{\varepsilon} \Big)^2 \frac{N_{(1)}^{2\varepsilon_1} }{ N_{(1)}^{1+2\alpha} N_{(2)} \min_{\{1,2\}}^{2\alpha}\, (N_3N_4N_5)^{2 \alpha + 2\sigma - 1}}.
    \end{align*}
\end{enumerate}
Summing up, in all cases with probability $\geq 1 - \varepsilon$ we get the bound
\begin{equation}
    \eqref{eq:No_Pairings_ToBound_RRDDD} 
    \lesssim \Big(\log \frac{1}{\varepsilon} \Big)^2  \frac{N_{(1)}^{3\varepsilon_1}}{N_{(1)}^{1 + 2\alpha} N_{(2)} (N_{(3)}N_{(4)}N_{(5)})^{2 \alpha + 2\sigma - 1}}.
\end{equation}
\end{itemize}

\subsubsection{Case $RDRDD$.}
In this case we have
\begin{align}\label{eq:No_Pairings_ToBound_RDRDD}
    \sum_{|n| \simeq N}  \big| \mathcal{Q}_1(u_1, \ldots, u_5)_n \big|^2
    = \sum_n \Bigg| \sum_{n_1, \ldots, n_5 } h^b_1 \frac{g_{n_1} g_{n_3}}{(\langle n_1 \rangle \langle n_3 \rangle)^{1+\alpha}} \overline{D_{n_2}} \, \overline{D_{n_4}} D_{n_5} \Bigg|^2.
\end{align}
By symmetry, we assume that $N_4 \leq N_2$ and $N_{(1)} = N_1$. 
We proceed as for \eqref{eq:No_Pairings_ToBound_RRDDD}:

\begin{itemize}

\item Subcase 1.
 If $N_4 \leq N_5$, splitting $n_4$ by Cauchy-Schwarz and using Theorem \ref{thm:Random_Tensor} we get
\begin{align}
    \eqref{eq:No_Pairings_ToBound_RDRDD} 
    & \lesssim \Big(\log \frac{1}{\varepsilon} \Big)^2 \frac{N_{1}^{\varepsilon_1}}{(N_1N_3)^{2 +2\alpha}\, (N_2N_4N_5)^{2 \alpha + 2\sigma}}
    \max_{X \dot{\cup} Y = \{1,3\} } \big\{
    \lVert  h^b_1 \rVert_{n_Xn_2n_5 \to n_Ynn_4}^2
    \big\}.
\end{align}
Again by the Schur test and counting Lemma \ref{lemma:combest-nopairings-cubic}:
\hfill \break

\begin{enumerate}
    \item Since $\lVert  h^b_1 \rVert_{n_1n_2n_3n_5 \to nn_4}^2
    \lesssim \operatorname{max}_{\{2,3,5\}} \operatorname{mid}_{\{2,3,5\}}^{1+\varepsilon_1} \operatorname{min}_{\{2,3,5\}}^2 N_4^{\varepsilon_1}$, we get
    \begin{align}
    \eqref{eq:No_Pairings_ToBound_RDRDD} 
    \lesssim \Big(\log \frac{1}{\varepsilon} \Big)^2 \frac{N_1^{3\varepsilon_1}}{N_1^{2 +2\alpha}\,  N_4^{2 \alpha + 2\sigma} N_3^{2\alpha} (N_2N_5)^{2 \alpha + 2 \sigma - 1}}.
\end{align}
\item Since $\lVert  h^b_1 \rVert_{n_1n_2n_5 \to nn_3n_4}^2
    \lesssim 
    N_2N_5^{1+\varepsilon_1} \, N_3 N_4^{1+\varepsilon_1}$, we get
\begin{align}
    \eqref{eq:No_Pairings_ToBound_RDRDD} \lesssim \Big(\log \frac{1}{\varepsilon} \Big)^2  \frac{N_1^{3\varepsilon_1}}{N_1^{2 +2\alpha}\, N_3^{1 +2\alpha} (N_2N_4N_5)^{2 \alpha + 2 \sigma - 1}}.
\end{align}
\item Since $\lVert  h^b_1 \rVert_{n_2n_3n_5 \to nn_1n_4}^2
    \lesssim
    \operatorname{mid}_{\{2,3,5\}} \operatorname{min}_{\{2,3,5\}}^{1+\varepsilon_1} \, N_1 N_4^{1+\varepsilon_1}$ and $\operatorname{max}_{\{2,3,5\}} = N_{(2)}$,  we get
\begin{align}
    \eqref{eq:No_Pairings_ToBound_RDRDD} \lesssim \Big(\log \frac{1}{\varepsilon} \Big)^2
    \lesssim
    \frac{N_1^{3\varepsilon_1}}{N_1^{1 +2\alpha}\, N_{(2)} N_3^{1 +2\alpha} (N_2N_4N_5)^{2 \alpha + 2 \sigma - 1}}.
\end{align}
\item Since $\lVert  h^b_1 \rVert_{n_2n_5 \to nn_1n_3n_4}^2
    \lesssim
    \operatorname{min}_{\{2,5\}} \, N_1 \operatorname{max}_{\{3,4\}}^{1+\varepsilon_1} \operatorname{min}_{\{3,4\}}^2$, 
we get
\begin{align}
    \eqref{eq:No_Pairings_ToBound_RDRDD} &\lesssim \Big(\log \frac{1}{\varepsilon} \Big)^2
    \frac{N_1^{2\varepsilon_1} \, \min_{\{3,4\}}}{(N_1N_3)^{1 +2\alpha}\, \max_{\{2,5\}} \,  (N_2N_4N_5)^{2 \alpha + 2 \sigma - 1}} \\
    &\lesssim \Big(\log \frac{1}{\varepsilon} \Big)^2
    \frac{N_1^{2\varepsilon_1} }{N_1^{1 +2\alpha} N_{(2)} N_3^{2\alpha}\, (N_2N_4N_5)^{2 \alpha + 2 \sigma - 1}},
\end{align}
because $\min_{\{3,4\}} = N_{(5)}$, and $N_3\max_{\{2,5\}}$ contains a factor $N_{(2)}$.
\end{enumerate}
\hfill \break
In all cases, we have with probability at least $1 - \varepsilon$ that
\begin{align}
    \eqref{eq:No_Pairings_ToBound_RDRDD} 
    \lesssim \Big(\log \frac{1}{\varepsilon} \Big)^2 \frac{N_{1}^{3\varepsilon_1} \, }{N_1^{1 +2\alpha}\, N_{(2)} \,   (N_2N_4N_5)^{2 \alpha + 2 \sigma - 1}}.
\end{align}

\item Subcase 2. 
If $N_5 \leq N_4$, we split $n_5$ with Cauchy-Schwarz and proceed similarly to get 
    \begin{align*}
    \eqref{eq:No_Pairings_ToBound_RDRDD} 
    & \lesssim \Big(\log \frac{1}{\varepsilon} \Big)^2 \frac{N_1^{\varepsilon_1}}{(N_1N_3)^{2 +2\alpha}\, (N_2N_4N_5)^{2 \alpha + 2\sigma}}
    \max_{X \dot{\cup} Y = \{ 1,3\}} \big\{
    \lVert  h^b_1 \rVert_{n_Xn_2n_4 \to n_Ynn_5}^2
    \big\}.
\end{align*}
By the Schur test and counting Lemma \ref{lemma:combest-nopairings-cubic}, we have the following possibilities:
\hfill \break 
\begin{enumerate}
    \item Given that $\lVert  h^b_1 \rVert_{n_1n_2n_3n_4 \to nn_5}^2
    \lesssim \operatorname{max}_{\{2,3,4\}} \operatorname{mid}_{\{2,3,4\}}^{1+\varepsilon_1} \operatorname{min}_{\{2,3,4\}}^2 N_5$, 
    we get
    \begin{align*}
    \eqref{eq:No_Pairings_ToBound_RDRDD} 
    \lesssim \Big(\log \frac{1}{\varepsilon} \Big)^2 \frac{N_1^{2\varepsilon_1}}{N_1^{2 + 2 \alpha} N_3^{2\alpha} (N_2 N_4 N_5)^{2 \alpha + 2\sigma - 1} }.
    \end{align*}

    \item Since $\lVert  h^b_1 \rVert_{n_1n_2n_4 \to nn_3n_5}^2
    \lesssim 
    N_2N_4^{1+\varepsilon_1} \, N_3 N_5^{1+\varepsilon_1}$, we get
    \begin{align*}
    \eqref{eq:No_Pairings_ToBound_RDRDD} 
    \lesssim \Big(\log \frac{1}{\varepsilon} \Big)^2 \frac{N_1^{3\varepsilon_1}}{N_1^{2 + 2\alpha} N_3^{1 + 2\alpha} (N_2 N_4 N_5)^{2 \alpha + 2\sigma - 1} }.
    \end{align*}

    \item Since $\lVert  h^b \rVert_{n_2n_3n_4 \to nn_1n_5}^2
    \lesssim
    \operatorname{mid}_{\{2,3,4\}} \operatorname{min}_{\{2,3,4\}}^{1+\varepsilon_1} \, N_1 N_5^{1+\varepsilon_1}$, 
    we get
    \begin{align*}
    \eqref{eq:No_Pairings_ToBound_RDRDD} 
    \lesssim  \Big(\log \frac{1}{\varepsilon} \Big)^2  \frac{N_1^{3\varepsilon_1}}{N_1^{1 + 2 \alpha} N_{(2)} \, N_3^{1+2\alpha}  (N_2N_4 N_5)^{2 \alpha + 2\sigma - 1} },
    \end{align*}
    because $N_5 \leq N_4 \leq N_2$ implies that $N_{(2)} = \operatorname{max}_{\{2,3,4\}}$.  

    \item Given that $\lVert  h^b \rVert_{n_2n_4 \to nn_1n_3n_5}^2
    \lesssim
    N_4^{\varepsilon_1} \, N_1 \operatorname{max}_{\{3,5\}}^{1+\varepsilon_1} \operatorname{min}_{\{3,5\}}^2$, 
    we get 
    \begin{align}
    \eqref{eq:No_Pairings_ToBound_RDRDD} \lesssim \Big(\log \frac{1}{\varepsilon} \Big)^2 \frac{N_1^{3\varepsilon_1} \, \operatorname{min}_{\{3,5\}}}{(N_1N_3)^{1+2\alpha} (N_2N_4)^{2\alpha + 2\sigma} N_5^{2 \alpha + 2\sigma - 1}}.
    \end{align}
    Observe that $\min_{\{3,5\}} = N_{(5)}$, and that $N_2N_3N_4$ contains the factors $N_{(2)}$ and $N_{(3)}$ because $N_5 \leq N_4 \leq N_2$. Hence,  
    \begin{align}
    \eqref{eq:No_Pairings_ToBound_RDRDD} \lesssim \Big(\log \frac{1}{\varepsilon} \Big)^2 \frac{N_1^{3\varepsilon_1} }{N_1^{1+2\alpha} N_{(2)} N_{(3)}  N_3^{2\alpha}  (N_2N_4N_5)^{2 \alpha + 2\sigma - 1}}.
    \end{align}
\end{enumerate}
In all cases, with probability $\geq 1 - \varepsilon$ we have
\begin{equation}
    \eqref{eq:No_Pairings_ToBound_RDRDD} 
    \lesssim \Big(\log \frac{1}{\varepsilon} \Big)^2 \frac{N_1^{3\varepsilon_1}}{N_1^{1+2\alpha} N_{(2)} (N_{(3)}N_{(4)} N_{(5)})^{2 \alpha + 2\sigma - 1}}.
\end{equation}

\end{itemize}

\subsubsection{Case $DRDRD$.} This last case with two $R$ is proved in an analogous way. It is even simpler since $N_{(1)} = N_2$ and by symmetry we can assume $N_1 \geq N_3 \geq N_5$, so we separate $n_5$ by Cauchy-Schwartz and we get the same bound as in previous cases.

\subsubsection{Cases $RRRDD$, $RRDRD$ and $RDRDR$.}
By symmetry, these are the cases with three $R$. 

\begin{itemize}
    \item Case $RRRDD$. We have
\begin{align}\label{eq:No_Pairings_ToBound_RRRDD}
    \sum_{|n| \simeq N}  \big| \mathcal{Q}_1(u_1, \ldots, u_5)_n \big|^2
    = \sum_n 
    \Bigg| \sum_{n_1, \ldots, n_5 } h^b_1 
    \frac{g_{n_1} \overline{g_{n_2}} g_{n_3}}{ (\langle n_1 \rangle \langle n_2 \rangle\langle n_3 \rangle)^{1+\alpha}} \overline{D_{n_4}} D_{n_5} \Bigg|^2, 
\end{align}
where by symmetry between $n_1$ and $n_3$ we may assume that $N_{(1)} = \max_{\{1,2\}}$. 
Directly bounding it with the random tensor estimate in Theorem~\ref{thm:Random_Tensor}, we get
\begin{align}
    \eqref{eq:No_Pairings_ToBound_RRRDD}
    & \lesssim \bigg\lVert \sum_{n_1, n_2, n_3} h^b_1 
    \frac{g_{n_1} \overline{g_{n_2}} g_{n_3}}{ (\langle n_1 \rangle \langle n_2 \rangle\langle n_3 \rangle)^{1+\alpha}} \bigg\rVert_{n_4n_5 \to n}^2 \lVert D_{n_4} \rVert_{n_4}^2 \lVert D_{n_5} \rVert_{n_5}^2 \\
    & \lesssim \Big(\log \frac{1}{\varepsilon} \Big)^3  \frac{N_{(1)}^{\varepsilon_1}}{(N_1N_2N_3)^{2 + 2\alpha} (N_4N_5)^{2\alpha + 2\sigma}} \, 
    \max_{X \dot\cup Y = \{1,2,3\} } \lVert h^b \rVert_{n_4n_5n_X \to nn_Y}^2.
\end{align}
There are eight partitions of $\{1,2,3\}$, each of which we treat separately by the Schur test. We write the following estimates up to the $(\log 1/\varepsilon)^3$ term.  
\hfill \break

\begin{enumerate}
    \item Since $\lVert h^b_1 \rVert_{n_1n_2n_3n_4n_5 \to n}^2 \lesssim N_{(2)}N_{(3)}^{1+\varepsilon_1}N_{(4)}^2 N_{(5)}^2$, we get 
    \begin{align}
    \eqref{eq:No_Pairings_ToBound_RRRDD} 
    \lesssim 
    \frac{N_{(1)}^{2\varepsilon_1} N_{(4)} N_{(5)}}{N_{(1)} (N_1N_2N_3)^{1+ 2\alpha} (N_4N_5)^{2\alpha + 2\sigma - 1 }} 
    \lesssim 
    \frac{N_{(1)}^{2\varepsilon_1}}{N_{(1)}^{2 + 2\alpha} (\min_{\{1,2\}} N_3)^{2\alpha} (N_4N_5)^{2\alpha + 2\sigma - 1 }}.
    \end{align}

    \item Since $\lVert h^b_1 \rVert_{n_2n_3n_4n_5 \to nn_1}^2  \lesssim 
    \operatorname{mid}_{1,\{2,3,4,5\}} \operatorname{mid}_{2,\{2,3,4,5\}}^{1+\varepsilon_1} \operatorname{min}_{\{2,3,4,5\}}^2
    \cdot N_1$, we get 
    \begin{align}
        \eqref{eq:No_Pairings_ToBound_RRRDD} 
        \lesssim  
        \frac{N_{(1)}^{2\varepsilon_1} \operatorname{min}_{\{2,3,4,5\}} }{(N_1 N_2 N_3)^{1+2\alpha} \operatorname{max}_{\{2,3,4,5\}}(N_4N_5)^{2\alpha + 2\sigma - 1}}
        \leq 
        \frac{N_{(1)}^{2\varepsilon_1} }{N_{(1)}^{1+2\alpha} N_{(2)} \min_{\{1,2\}}^{1+2\alpha} N_3^{2\alpha} (N_4N_5)^{2\alpha + 2\sigma - 1}}
    \end{align}
    because $\operatorname{max}_{\{2,3,4,5\}} \geq N_{(2)}$.

    \item  Since $\lVert h^b_1 \rVert_{n_1n_3n_4n_5 \to nn_2}^2 \lesssim 
    \operatorname{mid}_{1,\{1,3,4,5\}} \operatorname{mid}_{2,\{1,3,4,5\}}^{1+\varepsilon_1} \operatorname{min}_{\{1,3,4,5\}}^2
    \cdot N_2^{\varepsilon_1}$, we get 
    \begin{align}
        \eqref{eq:No_Pairings_ToBound_RRRDD}
        \lesssim 
        \frac{N_{(1)}^{3\varepsilon_1} \operatorname{min}_{\{1,3,4,5\}}}{ N_2^{2 + 2\alpha} (N_1N_3)^{1 + 2\alpha} \operatorname{max}_{\{1,3,4,5\}} (N_4N_5)^{2\alpha + 2\sigma - 1 }}
        \leq 
        \frac{N_{(1)}^{3\varepsilon_1}}{ N_{(1)}^{1+2\alpha} N_{(2)} \min_{\{1,2\}}^{1+2\alpha} N_2 N_3^{2\alpha} (N_4N_5)^{2\alpha + 2\sigma - 1}}.
    \end{align}
    because $\operatorname{max}_{\{1,3,4,5\}} \geq N_{(2)}$.
    
    \item Since $\lVert h^b_1 \rVert_{n_1n_2n_4n_5 \to nn_3}^2 \lesssim 
    \operatorname{mid}_{1,\{1,2,4,5\}} \operatorname{mid}_{2,\{1,2,4,5\}}^{1+\varepsilon_1} \operatorname{min}_{\{1,2,4,5\}}^2
    \cdot N_3$, we get 
    \begin{align}
        \eqref{eq:No_Pairings_ToBound_RRRDD}
        \lesssim 
        \frac{ N_{(1)}^{2\varepsilon_1} \operatorname{min}_{\{1,2,4,5\}} }{(N_1N_2N_3)^{1 + 2\alpha} \operatorname{max}_{\{1,2,4,5\}}(N_4N_5)^{2\alpha + 2\sigma - 1}}
        \leq 
        \frac{ N_{(1)}^{2\varepsilon_1} }{N_{(1)}^{1+2\alpha} N_{(2)}  N_3^{1 + 2\alpha} \min_{\{1,2\}}^{2\alpha}(N_4N_5)^{2\alpha + 2\sigma - 1 }}
    \end{align}
    because $\operatorname{max}_{\{1,2,4,5\}} \geq N_{(2)}$.

    \item Since $\lVert h^b_1 \rVert_{n_3n_4n_5 \to nn_1n_2}^2 \lesssim \operatorname{mid}_{\{3,4,5\}} \, \operatorname{min}_{\{3,4,5\}}^{1+\varepsilon_1} \, N_1N_2^{1+\varepsilon_1}$, we get 
    \begin{align}
        \eqref{eq:No_Pairings_ToBound_RRRDD}
        \lesssim 
        \frac{N_{(1)}^{3\varepsilon_1}}{(N_1N_2N_3)^{1+2\alpha} \max_{\{3,4,5\}} \, (N_4N_5)^{2\alpha + 2\sigma - 1}}
        \leq 
        \frac{N_{(1)}^{3\varepsilon_1}}{(N_{(1)} \min_{\{1,2\}} N_3)^{1+2\alpha} \max_{\{3,4,5\}} \, (N_4N_5)^{2\alpha + 2\sigma - 1}},
    \end{align}
    and observe that $\min_{\{1,2\}}N_3 \max_{\{3,4,5\}}$ contains a factor $N_{(2)}$.

    \item Since $\lVert h^b_1 \rVert_{n_2n_4n_5 \to nn_1n_3}^2 \lesssim \operatorname{mid}_{\{2,4,5\}} \, \operatorname{min}_{\{2,4,5\}}^{1+\varepsilon_1} \, N_1N_3^{1+\varepsilon_1}$, we get 
    \begin{align}
        \eqref{eq:No_Pairings_ToBound_RRRDD}
        \lesssim 
        \frac{ N_{(1)}^{3\varepsilon_1} }{(N_1N_2N_3)^{1 + 2\alpha} \max_{\{2,4,5\}} (N_4N_5)^{2\alpha + 2\sigma - 1}}
        \leq \frac{ N_{(1)}^{3\varepsilon_1} }{(N_{(1)} \min_{\{1,2\}}N_3)^{1 + 2\alpha} \max_{\{2,4,5\}} (N_4N_5)^{2\alpha + 2\sigma - 1}},  
    \end{align}
    and observe that $\min_{\{1,2\}}N_3 \max_{\{2,4,5\}}$ also contains a factor of $N_{(2)}$.

    \item Since $\lVert h^b_1 \rVert_{n_1n_4n_5 \to nn_2n_3}^2 \lesssim \operatorname{mid}_{\{1,4,5\}} \operatorname{min}_{\{1,4,5\}}^{1+\varepsilon_1} N_2 N_3^{1+\varepsilon_1}$, we get 
    \begin{align}
        \eqref{eq:No_Pairings_ToBound_RRRDD}
        \lesssim 
        \frac{N_{(1)}^{3\varepsilon_1}}{ (N_1N_2N_3)^{1 + 2\alpha} \max_{\{1,4,5\}} (N_4N_5)^{2\alpha + 2\sigma - 1}}
        =
        \frac{N_{(1)}^{3\varepsilon_1}}{ (N_{(1)} \min_{\{1,2\}} N_3)^{1 + 2\alpha} \max_{\{1,4,5\}} (N_4N_5)^{2\alpha + 2\sigma - 1}}, 
    \end{align}
    and $\min_{\{1,2\}}N_3 \max_{\{1,4,5\}}$ also contains a factor of $N_{(2)}$.

    \item Since $\lVert h^b_1 \rVert_{n_4n_5 \to nn_1n_2n_3}^2 \lesssim \operatorname{min}_{\{4,5\}}  \cdot \operatorname{max}_{\{1,2,3\}} \operatorname{mid}_{\{1,2,3\}}^{1+\varepsilon_1} \operatorname{min}_{\{1,2,3\}}^2$, we get 
    \begin{align}
        \eqref{eq:No_Pairings_ToBound_RRRDD}
        & \lesssim 
        \frac{ N_{(1)}^{2\varepsilon_1} \operatorname{min}_{\{1,2,3\}} }{(N_1 N_2 N_3)^{1 + 2\alpha} \max_{\{4,5\}}(N_4N_5)^{2\alpha + 2\sigma - 1}}
        \\
        & = \frac{ N_{(1)}^{2\varepsilon_1} }{(N_{(1)} \max(\min_{\{1,2\}}, N_3))^{1 + 2\alpha} \max_{\{4,5\}} (N_4N_5)^{2\alpha + 2\sigma - 1}}, 
    \end{align}
    and observe that $\max(\min_{\{1,2\}}, N_3) \max_{\{4,5\}}$ contains $N_{(2)}$ too. 
\end{enumerate}
\hfill \break
In all cases, with probability $\geq 1 - \varepsilon$ we get at least the bound 
\begin{equation}
    \eqref{eq:No_Pairings_ToBound_RRRDD}
        \lesssim \Big(\log \frac{1}{\varepsilon} \Big)^3 \frac{N_{(1)}^{3\varepsilon_1}}{N_{(1)}^{1+2\alpha} N_{(2)} \, (N_{(3)}N_{(4)}N_{(5)})^{2\alpha + 2\sigma - 1}}.
\end{equation}
\item Case $RRDRD$.  In this case we have
\begin{align}\label{eq:No_Pairings_ToBound_RRDRD}
    \sum_{|n| \simeq N}  \big| \mathcal{Q}_1(u_1, \ldots, u_5)_n \big|^2
    = \sum_n 
    \Bigg| \sum_{n_1, \ldots, n_5 } h^b_1 
    \frac{g_{n_1} \overline{g_{n_2}} \, \overline{g_{n_4}}}{ (\langle n_1 \rangle \langle n_2 \rangle\langle n_4 \rangle)^{1+\alpha}} D_{n_3} D_{n_5} \Bigg|^2,
\end{align}
which as in the previous case we bound directly with the operator norm by
\begin{align}
    \eqref{eq:No_Pairings_ToBound_RRDRD}
    \lesssim \Big(\log \frac{1}{\varepsilon} \Big)^3
    \frac{N_{(1)}^{\varepsilon_1}}{(N_1N_2N_4)^{2 + 2\alpha} (N_3N_5)^{2\alpha + 2\sigma}} \, 
    \max_{X \dot\cup Y = \{1,2,4\} } \lVert h^b \rVert_{n_3n_5n_X \to nn_Y}^2, 
\end{align}
with probability $\geq 1 - \varepsilon$.
There are eight partitions of $\{1,2,4\}$ which, as before, are addressed using the Schur test and counting Lemma \ref{lemma:combest-nopairings-cubic}. 
We skip computations since they are analogous to the previous case $RRRDD$ (the role of $n_3$  now being played by $n_4$, which actually makes some of the counting more favorable).
We get the same bound
\begin{equation}
    \eqref{eq:No_Pairings_ToBound_RRDRD}
        \lesssim \Big(\log \frac{1}{\varepsilon} \Big)^3 \frac{N_{(1)}^{3\varepsilon_1}}{N_{(1)}^{1+2\alpha} N_{(2)} \, (N_{(3)} N_{(4)} N_{(5)})^{2\alpha + 2\sigma - 1}}. 
\end{equation}

\item Case $RDRDR$. For this last case, the same procedure gives, with probability at least $1-\varepsilon$, 
\begin{align}\label{eq:No_Pairings_ToBound_RDRDR}
    \sum_{|n| \simeq N}  \big| \mathcal{Q}_1(u_1, \ldots, u_5)_n \big|^2
    & = \sum_n 
    \Bigg| \sum_{n_1,n_2,n_3,n_4,n_5 } h^b_1 
    \frac{g_{n_1} g_{n_3} g_{n_5}}{ (\langle n_1 \rangle \langle n_3 \rangle\langle n_5 \rangle)^{1+\alpha}}\overline{D_{n_2}} \,  \overline{D_{n_4}}  \Bigg|^2 \\
    & \lesssim \Big(\log \frac{1}{\varepsilon} \Big)^3 \frac{N_{(1)}^{3\varepsilon_1}}{N_{(1)}^{1+2\alpha} N_{(2)}  \, (N_{(3)} N_{(4)} N_{(5)})^{2\alpha + 2\sigma - 1}}.
\end{align}
\end{itemize}

\subsubsection{Cases $RRRRD$ and $RRRDR$.}
Both cases being analogous, we only compute explicitly
\begin{align}\label{eq:No_Pairings_ToBound_RRRRD}
    \sum_{|n| \simeq N}  \big| \mathcal{Q}_1(u_1, \ldots, u_5)_n \big|^2 
    = \sum_n 
    \Bigg| \sum_{n_1, \ldots, n_5 } h^b_1 
    \frac{g_{n_1} \overline{g_{n_2}} g_{n_3} \overline{g_{n_4}} }{ (\langle n_1 \rangle \langle n_2 \rangle\langle n_3 \rangle \langle n_4 \rangle)^{1+\alpha}} D_{n_5} \Bigg|^2.
\end{align}
By symmetry, we reduce to $N_{(1)} = \max_{\{1,2\}}$. By the random tensor estimate in Theorem \ref{thm:Random_Tensor},
    \begin{align*}
        \eqref{eq:No_Pairings_ToBound_RRRRD} 
        \lesssim \Big(\log \frac{1}{\varepsilon} \Big)^4 \frac{N_{(1)}^{\varepsilon_1}}{ (N_1N_2N_3N_4)^{2 + 2\alpha} N_5^{2\alpha + 2\sigma}} \max_{X \dot{\cup} Y = \{1,2,3,4\}} \|h^b_1\|_{n_5n_X \rightarrow nn_Y}^2, 
    \end{align*}
    where the maximum is taken among the following estimates for the operator norms: 
    \begin{align*}
    &\lVert  h^b \rVert_{n_1n_2n_3n_4n_5 \to n}^2 \lesssim N_{(2)} N_{(3)}^{1 + \varepsilon_1} N_{(4)}^2 N_{(5)}^2  \\
    &\lVert  h^b \rVert_{n_2n_3n_4n_5 \to nn_1}^2 \lesssim  \operatorname{mid}_{1,\{2,3,4,5\}} \operatorname{mid}_{2,\{2,3,4,5\}}^{1+\varepsilon_1}  \operatorname{max}_{\{2,3,4,5\}}^2 \cdot  N_1, \\
    &\lVert  h^b \rVert_{n_1n_2n_3n_5 \to nn_4}^2 \lesssim \text{mid}_{1,\{1,2,3,5\}} \text{mid}_{2,\{1,2,3,5\}}^{1+\varepsilon_1} \text{min}_{\{1,2,3,5\}}^2  \cdot N_4^{\varepsilon_1}, \\
    &\lVert  h^b \rVert_{n_1n_2n_4n_5 \to nn_3}^2 \lesssim \text{mid}_{1,\{1,2,4,5\}} \text{mid}_{2,\{1,2,4,5\}}^{1 + \varepsilon_1} \text{min}_{\{1,2,4,5\}}^2  \cdot N_3, \\
    & \lVert  h^b \rVert_{n_3n_4n_5 \to nn_1n_2}^2 \lesssim  \text{mid}_{\{3,4,5\}} \text{min}_{\{3,4,5\}}^{1+\varepsilon_1} \cdot N_1 N_2^{1 + \varepsilon_1}, \\
    & \lVert  h^b \rVert_{n_1n_4n_5 \to nn_2n_3}^2 \lesssim  \text{mid}_{\{1,4,5\}} \text{min}_{\{1,4,5\}}^{1 + \varepsilon_1} \cdot \text{max}_{\{2,3\}} \text{min}_{\{2,3\}}^{1 + \varepsilon_1}, \\
    & \lVert  h^b \rVert_{n_1n_2n_5 \to nn_3n_4}^2 \lesssim  \text{mid}_{\{1,2,5\}} \text{min}_{\{1,2,5\}}^{1 + \varepsilon_1} \cdot  \text{max}_{\{3,4\}} \text{min}_{\{3,4\}}^{1 + \varepsilon_1}, \\
    & \lVert  h^b \rVert_{n_1n_3n_5 \to nn_2n_4}^2 \lesssim  \operatorname{mid}_{\{1,3,5\}}^{\varepsilon_1} \operatorname{min}_{\{1,3,5\}}^2 \cdot  \operatorname{max}_{\{2,4\}}^{\varepsilon_1} \operatorname{min}_{\{2,4\}}^2,  \\
    & \lVert  h^b \rVert_{n_2n_4n_5 \to nn_1n_3}^2 \lesssim  \text{mid}_{\{2,4,5\}} \text{min}_{\{2,4,5\}}^{1 + \varepsilon_1}  \cdot \operatorname{max}_{\{1,3\}} \operatorname{min}_{\{1,3\}}^{1 + \varepsilon_1}, \\
    & \lVert  h^b \rVert_{n_2n_3n_5 \to nn_1n_4}^2 \lesssim  \text{mid}_{\{2,3,5\}} \text{min}_{\{2,3,5\}}^{1 + \varepsilon_1} \cdot \operatorname{max}_{\{1,4\}} \operatorname{min}_{\{1,4\}}^{1 + \varepsilon_1}, \\
    & \lVert  h^b \rVert_{n_1n_5 \to nn_2n_3n_4}^2 \lesssim N_5^{\varepsilon_1} \cdot \text{max}_{\{2,3,4\}}  \text{mid}_{\{2,3,4\}}^{1 + \varepsilon_1} \text{min}_{\{2,3,4\}}^2, \\
    & \lVert  h^b \rVert_{n_2n_5 \to nn_1n_3n_4}^2 \lesssim \operatorname{min}_{\{2,5\}} \cdot \, \operatorname{max}_{\{1,3,4\}} \operatorname{mid}_{\{1,3,4\}}^{1 + \varepsilon_1} \operatorname{min}_{\{1,3,4\}}^2, \\
    & \lVert  h^b \rVert_{n_3n_5 \to nn_1n_2n_4}^2 \lesssim \text{min}_{\{3,5\}}^{\varepsilon_1} \cdot N_{(1)} \text{mid}_{\{1,2,4\}}^{1 + \varepsilon_1} \text{min}_{\{1,2,4\}}^2, \\
    & \lVert  h^b \rVert_{n_4n_5 \to nn_1n_2n_3}^2 \lesssim \operatorname{min}_{\{4,5\}} \cdot  \, N_{(1)} \text{mid}_{\{1,2,3\}}^{1 + \varepsilon_1}  \text{min}_{\{1,2,3\}}^2  , \\
    & \lVert  h^b \rVert_{n_5 \to nn_1n_2n_3n_4}^2 \lesssim  N_{(1)} \,  \text{mid}_{1,\{1,2,3,4\}}^{1 + \varepsilon_1} \text{mid}_{2,\{1,2,3,4\}}^2 \text{min}_{\{1,2,3,4\}}^2 .
    \end{align*}
In all cases, with probability $\geq 1 - \varepsilon$ we get 
\begin{align*}
     \eqref{eq:No_Pairings_ToBound_RRRRD} \lesssim \Big(\log \frac{1}{\varepsilon} \Big)^4 \frac{N_{(1)}^{3\varepsilon_1}}{N_{(1)}^{1 + 2\alpha} N_{(2)} N_{(5)} \,  (N_{(3)}N_{(4)} N_{(5)})^{2\alpha + 2\sigma - 1}}.
\end{align*}

\subsection{$2$-pairings: $\boldsymbol{\mathcal{Q}_2}$}
\label{sec:Two_Pairings}
Omitting constants, the term 
\begin{equation}
        \mathcal{Q}_2(u)_n = 
     \mathcal W_3(u)_n
    \end{equation}
    is the only one in \eqref{eq:qnl-fcoeff}  arising from 2-pairings. It is intrinsically trilinear
    and it corresponds to the nonlinearity of the Wick-ordered cubic NLS. 
    In this case, we have to bound 
    \begin{equation}
    \label{eq:Two_Pairings_ToBound}
        \sum_{|n| \simeq N}  \big| \mathcal{Q}_2(u_1, u_2, u_3)_n \big|^2
        = \sum_n \Big| \sum_{n_1, n_2, n_3} h_{\mathcal W_3}^b (u_1)_{n_1} \overline{(u_2)_{n_2}} (u_3)_{n_3} \Big|^2, 
    \end{equation}
    such that according to the definition of $\mathcal{A}_2$ in \eqref{eq:Aj},
    the base tensor is given by
    \begin{align*}
        h^b_2 
        = h^b_{\mathcal{W}_3}(n_1,n_2,n_3,n)  \mathbbm 1_{n_4 = n_5} \mathbbm 1_{|n_4| \simeq N_4}, 
    \end{align*}
    and $h^b_{\mathcal{W}_3}$ is the tensor corresponding to the cubic Wick-ordered nonlinearity
    \begin{align}\label{eq:Base_Tensor_Wick}
    h^b_{\mathcal{W}_3} 
    =  
    \mathbbm 1_{n_1 - n_2 + n_3 = n} \,  
    \mathbbm 1_{|n|^2 - |n_1|^2 + |n_2|^2 - |n_3|^2 = \mu} \, \mathbbm 1_{n_1 \neq n_2} \,  \mathbbm 1_{n_3 \neq n_2} \,
    \mathbbm 1_{|n| \simeq N} \, 
    \prod_{k=1}^3 \mathbbm 1_{|n_k| \simeq N_k}.
    \end{align}
    We again separate cases depending on the choices for $u_i \in \{ R, D\}$. 
    \hfill \break

    \noindent
    \textbf{Case RRR.} 
    By the Wick-Isserlis theorem we have 
        \begin{align}
            & \sum_{\substack{n_1,n_2,n_3 \\ n_1', n_2', n_3' }} h^b_{\mathcal{W}_3}(n_1,n_2,n_3,n) h^b_{\mathcal{W}_3}(n_1',n_2',n_3',n) 
            \frac{ \mathbb E\big[ g_{n_1} \overline{g_{n_2}} g_{n_3} \overline{g_{n_1'} } g_{n_2'} \overline{g_{n_3'}} \big]}{ \prod_{j=1}^3 (\langle n_j \rangle \langle n_j' \rangle )^{1+\alpha}} 
            \simeq \frac{1}{(N_1N_2N_3)^{2+2\alpha}} \sum_{n_1,n_2,n_3} h^b_{\mathcal{W}_3}, 
        \end{align}
    and therefore 
    \begin{align}
            \mathbb E [\eqref{eq:Two_Pairings_ToBound}]
            & = \frac{1}{(N_1N_2N_3)^{2+2\alpha}} \sum_n \sum_{n_1,n_2,n_3} h^b_{\mathcal{W}_3} 
            \lesssim  \frac{N_{(1)}N_{(2)}^{1+\varepsilon_1} N_{(3)}^2}{(N_1N_2N_3)^{2+2\alpha}} = \frac{N_{(1)}^{\varepsilon_1}}{(N_{(1)}N_{(2)})^{1+2\alpha}N_{(3)}^{2\alpha}}. 
    \end{align}
    Hence, by Proposition \ref{lemma:wiener-chaos}, with probability $\geq 1-\varepsilon$ we have 
    \begin{align}\label{eq:wiener-app2}
        \eqref{eq:Two_Pairings_ToBound} 
        \lesssim \Big(\log \frac{1}{\varepsilon} \Big)^3 \frac{N_{(1)}^{2\varepsilon_1}}{N_{(1)}^{1+2\alpha}N_{(2)}^{1+2\alpha}N_{(3)}^{2\alpha}}.
    \end{align}

\noindent
\textbf{Case RRD.} By the random tensor estimate in Theorem~\ref{thm:Random_Tensor}, we get
        \begin{align*}
        \eqref{eq:Two_Pairings_ToBound}
        = \sum_n \bigg| \sum_{n_1,n_2,n_3} h^b_{\mathcal{W}_3}    R_{n_1} \overline{R_{n_2}} D_{n_3} \bigg|^2   
        & \leq  \bigg\| \sum_{n_1,n_2} h^b_{\mathcal{W}_3} \frac{g_{n_1} \overline{g_{n_2}}}{(\langle n_1 \rangle \langle n_2 \rangle)^{2 + 2\alpha}} \bigg\|_{n_3 \rightarrow n}^2 \| D_{n_3} \|_{n_3}^2 \\
        & \lesssim \Big(  \log \frac{1}{\varepsilon} \Big)^2 \frac{N_{(1)}^{\varepsilon_1}}{(N_1 N_2)^{2 + 2\alpha} } \frac{1}{N_3^{2\alpha + 2\sigma}} \max_{X \dot\cup Y = \{1,2\}} \lVert h^b_{\mathcal{W}_3} \rVert_{n_3n_X \to nn_Y}^2.
        \end{align*}
        By the Schur test and the counting estimates in Lemma~\ref{lemma:combest-nopairings-cubic}, we have 
        \begin{align}\label{eq:schur-prob-3p}
            \begin{array}{ll}
               \lVert h^b_{\mathcal{W}_3} \rVert_{n_1n_2n_3 \to n}^2
            \lesssim N_{(2)} N_{(3)}^{1+\varepsilon_1},   & \qquad \qquad  \lVert h^b_{\mathcal{W}_3} \rVert_{n_1n_3 \to nn_2}^2 \lesssim  \operatorname{min}_{\{1,3\}}^{\varepsilon_1} \cdot N_2^{\varepsilon_1}, \\
              \lVert h^b_{\mathcal{W}_3} \rVert_{n_2n_3 \to nn_1}^2 \lesssim  \operatorname{min}_{\{2,3\}} \cdot N_1,   &  \qquad \qquad  \lVert h^b_{\mathcal{W}_3} \rVert_{n_3 \to nn_1n_2}^2 \lesssim N_1 N_2^{1+\varepsilon_1}.
            \end{array}
        \end{align}
        The worst case being the last one, with probability $1 - \varepsilon$ we get
        \begin{align*}
            \eqref{eq:Two_Pairings_ToBound}
            \lesssim \Big(\log \frac{1}{\varepsilon} \Big)^2 \frac{N_{(1)}^{2\varepsilon_1}}{(N_1N_2)^{1 + 2\alpha} N_3^{2\alpha + 2\sigma}}
            \lesssim \Big(\log \frac{1}{\varepsilon} \Big)^2 \frac{N_{(1)}^{2\varepsilon_1}}{N_{(1)}N_{(2)}N_{(3)}(N_1N_2)^{2\alpha} N_3^{2\alpha + 2\sigma - 1}}. 
        \end{align*}

    \noindent
    \textbf{Case RDR. }
    Similarly, 
    \begin{align*}
        \eqref{eq:Two_Pairings_ToBound}
        = \sum_n \bigg| \sum_{n_1,n_2,n_3} h^b_{\mathcal{W}_3}    R_{n_1} \overline{D_{n_2}} R_{n_3} \bigg|^2   
        & \lesssim  \Big(  \log \frac{1}{\varepsilon} \Big)^2\frac{  N_{(1)}^{\varepsilon_1}}{ (N_1 N_3)^{2 + 2\alpha} N_2^{2\alpha + 2\sigma}} \max_{X \dot\cup Y = \{1,3\}} \lVert h^b_{\mathcal{W}_3} \rVert_{n_2n_X \to nn_Y}^2.
    \end{align*}
    Like in \eqref{eq:schur-prob-3p}, we have
    \begin{align}\label{eq:schur-prob-3p_2}
    \begin{array}{ll}
        \lVert h^b_{\mathcal{W}_3} \rVert_{n_1n_2n_3 \to n}^2
            \lesssim N_{(2)} N_{(3)}^{1+\varepsilon_1}, & \qquad \qquad \lVert h^b_{\mathcal{W}_3} \rVert_{n_1n_2 \to nn_3}^2 \lesssim \operatorname{min}_{\{1,2\}} \cdot N_3, \\
         \lVert h^b_{\mathcal{W}_3} \rVert_{n_2n_3 \to nn_1}^2 \lesssim    \operatorname{min}_{\{2,3\}} \cdot N_1, & \qquad \qquad \lVert h^b_{\mathcal{W}_3} \rVert_{n_2 \to nn_1n_3}^2 \lesssim N_1 N_3^{1+\varepsilon_1}.
    \end{array}
    \end{align}
    and the worst case being the last one, with probability $1 - \varepsilon$ we get
        \begin{align*}
        \eqref{eq:Two_Pairings_ToBound}
        \lesssim \Big(\log \frac{1}{\varepsilon} \Big)^2 \frac{N_{(1)}^{2\varepsilon_1}}{(N_1N_3)^{1 + 2\alpha} N_2^{2\alpha + 2\sigma} }
        \lesssim \Big(\log \frac{1}{\varepsilon} \Big)^2 \frac{N_{(1)}^{2\varepsilon_1}}{N_{(1)}N_{(2)}N_{(3)}(N_1N_3)^{2\alpha} N_2^{2\alpha + 2\sigma - 1}}.
        \end{align*}

\noindent
\textbf{Case RDD. }  In this case we can assume $N_{(1)} = N_1$ by Proposition \ref{prop:Qsmoothing-mest}. As before, 
    \begin{align}\label{eq:RDD_With_RT}
        \eqref{eq:Two_Pairings_ToBound}
        = \sum_n \bigg| \sum_{n_1,n_2,n_3} h^b_{\mathcal{W}_3} R_{n_1} \overline{D_{n_2}} D_{n_3} \bigg|^2
        \leq  \bigg\| \sum_{n_1}  h^b_{\mathcal{W}_3} \frac{g_{n_1}}{\langle n_1 \rangle^{1 + \alpha}} \bigg\|_{n_2n_3 \rightarrow n}^2 \| D_{n_2} \|_{n_2}^2 \| D_{n_3} \|_{n_3}^2. 
    \end{align}
    Since by the Schur test and the counting estimates in Lemmata \ref{lemma:schur-test} and \ref{lemma:combest-nopairings-cubic} we have 
    \begin{align}
        \max \{ \lVert h^b_{\mathcal{W}_3} \rVert_{n_1n_2n_3 \to n}^2, 
        \lVert h^b_{\mathcal{W}_3} \rVert_{n_2n_3 \to nn_1}^2 \}
        \leq 
        \max\{ N_{(2)}N_{(3)}^{1+\varepsilon_1}, N_1 N_{(3)} \}
        \leq N_{(1)} N_{(3)}^{1+\varepsilon_1}, 
    \end{align}
    the random tensor estimate in Theorem \ref{thm:Random_Tensor} implies that with probability $1 - \varepsilon$ we have
    \begin{align*}
        \eqref{eq:Two_Pairings_ToBound}
        & \lesssim 
        \Big(\log \frac{1}{\varepsilon} \Big) \frac{N_{(1)}^{\varepsilon_1}}{N_1^{2 + 2\alpha} (N_2 N_3)^{2\alpha + 2\sigma}} \max \{ \lVert h^b_{\mathcal{W}_3} \rVert_{n_1n_2n_3 \to n}^2, 
        \lVert h^b_{\mathcal{W}_3} \rVert_{n_2n_3 \to nn_1}^2 \} \\
        & \lesssim \Big(\log \frac{1}{\varepsilon} \Big) \frac{N_{(1)}^{2\varepsilon_1}}{N_{(1)}^{1 + 2\alpha} N_{(2)} (N_2 N_3)^{2\alpha + 2\sigma - 1}}.
    \end{align*}

\noindent\textbf{Case DRD. }  Now we can assume $N_{(1)} = N_2$ by Proposition \ref{prop:Qsmoothing-mest}. Again,
    \begin{align}\label{eq:DRD_With_RT}
        \eqref{eq:Two_Pairings_ToBound}
        = \sum_n \bigg| \sum_{n_1,n_2,n_3} h^b_{\mathcal{W}_3} D_{n_1} \overline{R_{n_2}} D_{n_3} \bigg|^2
        \leq  \bigg\| \sum_{n_2}  h^b_{\mathcal{W}_3} \frac{\overline{g_{n_2}}}{\langle n_2 \rangle^{1 + \alpha}} \bigg\|_{n_1n_3 \rightarrow n}^2 \| D_{n_1} \|_{n_1}^2 \| D_{n_3} \|_{n_3}^2. 
    \end{align}
    Proceeding as in the previous case, since
    \begin{align}
        \max \{ \lVert h^b_{\mathcal{W}_3} \rVert_{n_1n_2n_3 \to n}^2, 
        \lVert h^b_{\mathcal{W}_3} \rVert_{n_1n_3 \to nn_2}^2 \}
        \leq 
        \max\{ N_{(2)}N_{(3)}^{1+\varepsilon_1}, N_{(3)}^{\varepsilon_1} N_2^{\varepsilon_1} \}
        \leq N_{(1)}^{\varepsilon_1} N_{(2)} N_{(3)}, 
    \end{align}
    Theorem \ref{thm:Random_Tensor} implies that with probability $1 - \varepsilon$ we get
    \begin{align*}
        \eqref{eq:Two_Pairings_ToBound}
        & \lesssim 
        \Big(\log \frac{1}{\varepsilon} \Big) \frac{N_{(1)}^{2\varepsilon_1}}{N_{(1)}^{2 + 2\alpha} (N_{(2)} N_{(3)})^{2\alpha + 2\sigma - 1}}.
    \end{align*}
    
Summing up, in all cases, 
with probability $\geq 1-\varepsilon$ we get
    \begin{equation}\label{eq:Bound_Cubic_Wick_Ordered}
    \eqref{eq:Two_Pairings_ToBound}
        \lesssim \Big(\log \frac{1}{\varepsilon} \Big)^3 \frac{N_{(1)}^{2\varepsilon_1}}{ N_{(1)}^{1+2\alpha} N_{(2)}  (N_{(2)} N_{(3)})^{2\alpha + 2\sigma - 1}}.
    \end{equation}

    \subsection{$3$-pairings: $\boldsymbol{\mathcal{Q}_3}$, $\boldsymbol{\mathcal{Q}_4}$, $\boldsymbol{\mathcal{Q}_5}$ and $\boldsymbol{\mathcal{Q}_6}$}\label{sec:smoothing-3pairings}
    Dropping constants, we consider the operators 
    \begin{align}\label{eq:Qj-2pairing}
    \begin{array}{ll}
       \mathcal{Q}_3(u)_n = 
     |u_n|^2 \, \mathcal W_3(u)_n,  & 
    \qquad \qquad \mathcal{Q}_4(u)_n = \mathcal W_3(\mathcal P_3(u),u,u)_n, \\
       \mathcal{Q}_5(u)_n = \mathcal W_3(u,\mathcal P_3(u),u)_n,  & \qquad \qquad \mathcal{Q}_6(u)_n = u_n^2 \, \overline{\mathcal W_3(u)_n}, 
    \end{array}
    \end{align}
    from \eqref{eq:qnl-fcoeff}. 
    We treat them in pairs. 
    
    \subsubsection{$\mathcal Q_3$ and $\mathcal Q_6$}
    Both terms reduce to the same estimate for 
    \begin{align}
    \label{eq:Three_Pairings_ToBound36}
        \sum_{|n| \simeq N}  \big| \mathcal{Q}_3(u_1, \ldots, u_5)_n \big|^2
        = 
        \sum_{|n| \simeq N}  |(u_4)_n|^2 \, |(u_5)_n|^2  \bigg| \sum_{n_1,n_2,n_3} h^b_{\mathcal{W}_3} u_{n_1} \overline{u_{n_2}} u_{n_3} \bigg|^2,
    \end{align}
    where $h^b_{\mathcal W_3}$ was defined in \eqref{eq:Base_Tensor_Wick}. 
    Recalling that $u_i \in \{ R, D \}$, we first separate cases for $u_4 u_5$. 
    Using the pointwise bound for $R_n$ in Lemma \ref{lem:pbound-gn},
    and $|D_n| \leq \lVert D_n \rVert_{\ell^2} \lesssim N^{-\alpha - \sigma}$ from \eqref{eq:Bound_Dn}, we get 
    \begin{align}
        |(u_4)_n \, (u_5)_n|^2 
        \lesssim  
        \begin{cases}
            \displaystyle \big(\log 1/\varepsilon \big)^2 \frac{N^{\varepsilon_1}}{N^{4 + 4\alpha}}, & \text{ if } u_4 u_5 = RR, \\
            \displaystyle \big(\log 1/\varepsilon \big) \frac{N^{\varepsilon_1}}{N^{2 + 4\alpha  + 2\sigma}}, & \text{ if } u_4 u_5 = RD, \\
            \displaystyle \frac{1}{N^{4\alpha + 4\sigma}}, & \text{ if } u_4 u_5 = DD.
        \end{cases}
    \end{align}
    Since we are aiming for $\sigma < 1/2$, the worst case is $u_4u_5 = DD$, so we get 
    \begin{align}
        \sum_{|n| \simeq N}  \big| \mathcal{Q}_3(u_1, \ldots, u_5)_n \big|^2
        \lesssim  
        \Big(\log  \frac{1}{\varepsilon} \Big)^{|\mathcal R_{45}|}
        \,  \frac{N^{\varepsilon_1}}{N^{4\alpha + 4\sigma}} 
        \sum_{|n| \simeq N}   \bigg| \sum_{n_1,n_2,n_3} h^b_{\mathcal{W}_3} u_{n_1} \overline{u_{n_2}} u_{n_3} \bigg|^2,
    \end{align}
    where $|\mathcal R_{45}|$ is the number of $R$ in $\{u_4, u_5\}$.
    The problem is thus reduced to bounding the same Wick-reordered cubic term as in \eqref{eq:Bound_Cubic_Wick_Ordered} in Subsection~\ref{sec:Two_Pairings}. 
    Hence, with probability $\geq 1 - \varepsilon$ we get
    \begin{equation}
        \eqref{eq:Three_Pairings_ToBound36}
        \lesssim 
        \Big(\log  \frac{1}{\varepsilon} \Big)^{|\mathcal R|} \, 
        \frac{N_{(1)}^{3\varepsilon_1}}{ N_{(1)}^{1+2\alpha} N_{(2)}  (N_{(2)} N_{(3)})^{2\alpha + 2\sigma - 1} N^{4\alpha + 4\sigma }}.
    \end{equation}

    \subsubsection{$\mathcal Q_4$ and $\mathcal{Q}_5$}\label{subsec:eq:3pairings-n1n4n5}
    Both can be treated in an almost identical way, so we give explicit computations only for $\mathcal{Q}_4$. We need to bound
   	\begin{align}\label{eq:3pairings-n1n2n3}
        \sum_{|n| \simeq N}  \big| \mathcal{Q}_4(u_1, \ldots, u_5)_n \big|^2
        = \sum_n \bigg| \sum_{\substack{ n_1, n_2, n_3 }} h^b_{\mathcal{W}_3} \, (u_1)_{n_1}  \overline{(u_4)_{n_1}} (u_5)_{n_1}  \overline{(u_2)_{n_2}} (u_3)_{n_3} \bigg|^2.
   	\end{align}
    By the breaking of symmetry between $n_1$ and $n_3$, we need to consider all the possibilities for $N_{(1)} = \max (N_1, N_2, N_3)$. 
    We separate cases as follows:

    \begin{itemize}
        \item Case $RRRRR$. Taking the expectation directly, 
        \begin{align}
            \mathbb E [\eqref{eq:3pairings-n1n2n3}]
            & \lesssim  \sum_n \sum_{\substack{n_1,n_2,n_3 \\ n_1',n_2',n_3'}} h^b_{\mathcal W_3}(n_1,n_2,n_3,n) h^b_{\mathcal W_3}(n_1',n_2',n_3',n) \frac{\mathbb E \big[ g_{n_1} \overline{g_{n_2}} g_{n_3} \overline{g_{n_1'}} g_{n_2'} \overline{g_{n_3'}} \big]}{(N_1^3 N_2 N_3)^{2 + 2\alpha}}\\
            & \lesssim \frac{1}{(N_1^3 N_2 N_3)^{2 + 2\alpha}} \sum_{n,n_1,n_2,n_3}h^b_{\mathcal W_3} \lesssim \frac{  N_{(2)}^{\varepsilon_1} }{(N_{(1)}N_{(2)})^{1+2\alpha} N_1^{4 + 4\alpha} N_{(3)}^{2\alpha}},
        \end{align}
        since the counting from Lemma \ref{lemma:combest-nopairings-cubic} provides a factor of $N_{(1)} \, N_{(2)}^{1+\varepsilon_1} \,  N_{(3)}^2$. Thus, by hypercontractivity in Proposition \ref{lemma:wiener-chaos}, with probability $1 - \varepsilon$ we conclude that
        \begin{align}\label{eq:wiener-app3}
            \eqref{eq:3pairings-n1n2n3} \lesssim \Big( \log \frac{1}{\varepsilon} \Big)^5 \frac{  N_{(1)}^{2\varepsilon_1} }{(N_{(1)}N_{(2)})^{1+2\alpha} N_1^{4 + 6\alpha} N_{(3)}^{2\alpha}}.
        \end{align}
        \item Case $N_{(1)} = N_1$. By Proposition \ref{prop:Qsmoothing-mest}, we may assume that the triple resonance factor is $u_1 \overline{u_4} u_5 = R \overline R R$. By Cauchy-Schwarz and Lemma \ref{lem:pbound-gn}, 
    \begin{align}\label{eq:3pairings-n1n2n3-int1}
        \eqref{eq:3pairings-n1n2n3} &\lesssim   \sum_{n_1} |R_{n_1}|^6  \cdot   
        \sum_{n, n_1}  \Bigg|  \sum_{n_2,n_3} h^b_{\mathcal{W}_3} \overline{(u_2)_{n_2}} (u_3)_{n_3} \Bigg|^2
        \lesssim 
        \Big(\log  \frac{1}{\varepsilon} \Big)^3 \frac{N_1^{\varepsilon_1}}{N_1^{4+6\alpha}} 
        \sum_{n, n_1} \Bigg|  \sum_{n_2,n_3} h^b_{\mathcal{W}_3} \overline{(u_2)_{n_2}} (u_3)_{n_3} \Bigg|^2.
    \end{align} 
    We now separate cases for $u_2$ and $u_3$: 
    \vspace{0.25cm}
        \begin{itemize}
            \item If $u_2 = u_3 = D$,  
            by Schur's test and the counting in \eqref{eq:schur-prob-3p}
            we directly bound
            \begin{align}
                \eqref{eq:3pairings-n1n2n3} 
                \lesssim
                \Big(\log  \frac{1}{\varepsilon} \Big)^3 \frac{N_1^{\varepsilon_1}}{N_1^{4+6\alpha} (N_2 N_3)^{ 2\alpha + 2\sigma}}  \lVert h_{\mathcal W_3}^b \rVert_{n_2n_3 \to nn_1}^2 
                & \lesssim \Big(\log  \frac{1}{\varepsilon} \Big)^3  \frac{N_1^{\varepsilon_1} N_1 \min_{\{2,3\}}}{N_1^{4 + 6\alpha} \, (N_2 N_3)^{2\alpha + 2\sigma}} \\
                &\lesssim \Big(\log  \frac{1}{\varepsilon} \Big)^3  \frac{N_{(1)}^{\varepsilon_1}}{N_{(1)}^{3 + 6\alpha} \, \max_{\{2,3\}}  (N_2N_3)^{2\alpha + 2\sigma - 1 }}.
            \end{align}

            \item If $u_2 = D$ and $u_3 = R$, 
            by the random tensor estimate in Theorem \ref{thm:Random_Tensor} and \eqref{eq:schur-prob-3p_2} we get 
            \begin{align}
                \eqref{eq:3pairings-n1n2n3}
                & \lesssim
                \Big(\log  \frac{1}{\varepsilon} \Big)^4 \frac{N_{(1)}^{2\varepsilon_1}}{N_1^{4+6\alpha} N_2^{ 2\alpha + 2\sigma} N_3^{ 2 + 2\alpha}} 
                \max \big( \lVert h^b_{\mathcal W_3} \rVert_{n_2n_3 \to nn_1}^2, \lVert h^b_{\mathcal W_3} \rVert_{n_2 \to nn_1n_3}^2 \big) \\
                & \lesssim \Big(\log  \frac{1}{\varepsilon} \Big)^4 \frac{N_{(1)}^{3\varepsilon_1} \, N_1 N_3^{1+\varepsilon_1}}{N_1^{4 + 6\alpha}N_2^{ 2\alpha + 2\sigma}N_3^{2 +  2\alpha}}   \lesssim \Big(\log  \frac{1}{\varepsilon} \Big)^4 \frac{N_{(1)}^{4\varepsilon_1}}{N_{(1)}^{3+6\alpha} N_{(2)} N_{(3)}  N_2^{ 2\alpha + 2\sigma - 1} \, N_3^{ 2\alpha}}.
            \end{align}

            \item If $u_2 = R$ and $u_3 = D$, the computations are symmetric to the case above, so we get 
            \begin{align}
                \eqref{eq:3pairings-n1n2n3}
                \lesssim \Big(\log  \frac{1}{\varepsilon} \Big)^4 \frac{N_{(1)}^{4\varepsilon_1}}{N_{(1)}^{3+6\alpha} N_{(2)} N_{(3)} N_2^{ 2\alpha}  N_3^{2\alpha +  2\sigma - 1}}.
            \end{align}
        \end{itemize}
        Since the case $u_2 = R = u_3$ was previously considered,  
        for all cases under $N_1 = N_{(1)}$ we get, with probability $\geq 1 - \varepsilon$,
            \begin{align*}
                \eqref{eq:3pairings-n1n2n3}
                & \lesssim \Big(\log  \frac{1}{\varepsilon} \Big)^{|\mathcal R|}\frac{N_{(1)}^{4\varepsilon_1}}{N_{(1)}^{3+6\alpha}N_{(2)} (N_2N_3)^{2\alpha + 2\sigma - 1}}.
            \end{align*}

        \item Case $N_{(1)} = \operatorname{max}_{\{2,3\}}$. We separate the following cases: 
        \vspace{0.2cm}
        \begin{itemize}
            \item If $u_2 u_3 = RR$, using the operator norm we have
         \begin{align*}
             \eqref{eq:3pairings-n1n2n3} & \lesssim  \bigg\| \sum_{n_2,n_3} h^b_{\mathcal{W}_3} \frac{\overline{g_{n_2}} g_{n_3}}{(\langle n_2 \rangle \langle n_3 \rangle)^{1 + \alpha}} \bigg\|_{n_1 \rightarrow n}^2 
             \Big\| (u_1)_{n_1} \, \overline{(u_4)_{n_1}} (u_5)_{n_1}  \Big\|_{n_1}^2. 
         \end{align*}
        Bounding $R_{n_1}$ pointwise by Lemma \ref{lem:pbound-gn}, since $\lVert D_{n_1} \rVert_{n_1}^2 \lesssim N_1^{-2\sigma - 2\alpha}$, we get 
        \begin{align}\label{eq:3pairings-n2n4n5-n2}
            \big\| (u_1)_{n_1} \, \overline{(u_4)_{n_1}} (u_5)_{n_1}  \big\|_{n_1}^2
            \lesssim \begin{cases}
                \big(\log(1/\varepsilon)\big)^3 \displaystyle \frac{N_{1}^{\varepsilon_1}}{N_1^{4 + 6 \alpha}}, & \text{ if } u_1 \overline{u_4} u_5 = R\overline R R, \\ 
                \big(\log(1/\varepsilon) \big)^2 \displaystyle \frac{N_1^{\varepsilon_1}}{N_1^{4 + 2\sigma + 6\alpha}}, & \text{ if } u_1 \overline{u_4} u_5 = R \overline R D, \\
                \log(1/\varepsilon) \displaystyle\frac{N_1^{\varepsilon_1}}{N_1^{2 + 4\sigma + 6\alpha}}, & \text{ if } u_1 \overline{u_4} u_5 = R \overline D D, \\
                \displaystyle \frac{1}{N_1^{6\sigma + 6\alpha}}, & \text{ if } u_1 \overline{u_4} u_5 = D \overline D D,. \\
            \end{cases}
        \end{align}
        with probability $\geq 1 - \varepsilon$. 
        Recalling that we are aiming for $\sigma < 1/2$, the $DDD$ case is the worst one,
        so by the random tensor estimate in Theorem \ref{thm:Random_Tensor} we get
        \begin{align*}
             \eqref{eq:3pairings-n1n2n3} & \lesssim 
             \Big(\log  \frac{1}{\varepsilon} \Big)^{|\mathcal R|}
            \frac{ N_{(1)}^{2\varepsilon_1}}{(N_2 N_3)^{2 + 2\alpha}N_1^{6 \alpha + 6 \sigma}}  \max_{X \dot\cup Y = \{2,3\}} \{\|h^b_{\mathcal{W}_3}\|_{n_1n_X \rightarrow nn_Y}^2\} \\
            &\lesssim  \Big(\log  \frac{1}{\varepsilon} \Big)^{|\mathcal R|} \frac{N_{(1)}^{3\varepsilon_1}}{(N_{(1)} \operatorname{min}_{\{2,3\}})^{1 + 2\alpha} N_1^{6 \alpha + 6 \sigma}  },
         \end{align*}
        with probability  $\geq 1 - \varepsilon$, since by \eqref{eq:schur-prob-3p} 
        the maximum is bounded by $N_2 N_3^{1+\varepsilon_1}$. 
        Observe that in $\operatorname{min}_{\{2,3\}}^{1 + 2\alpha} N_1^{6 \sigma + 6 \alpha}$ there is at least one factor $N_{(2)}$.

            \item If $u_2 u_3 = RD$, we can assume $N_{(1)} = N_2$ by Proposition \ref{prop:Qsmoothing-mest}. We use \eqref{eq:3pairings-n2n4n5-n2} and the random tensor estimate from Theorem \ref{thm:Random_Tensor} to get 
        \begin{align*}
             \eqref{eq:3pairings-n1n2n3}  
             &\lesssim \Big( \log \frac{1}{\varepsilon} \Big)^{|\mathcal{R}|} \frac{N_{(1)}^{\varepsilon_1}}{N_2^{2 + 2\alpha}\, N_3^{ 2\alpha  + 2\sigma }} \, 
             \frac{N_1^{\varepsilon_1}}{N_1^{6\alpha + 6\sigma}} \, \max\{\|h^b_{\mathcal{W}_3}\|_{n_1n_2n_3 \rightarrow n}^2 , \|h^b_{\mathcal{W}_3}\|_{n_1n_3 \rightarrow nn_2}^2 \} \\
             & \lesssim \Big(\log  \frac{1}{\varepsilon} \Big)^{|\mathcal R|} \frac{N_{(1)}^{3\varepsilon_1}}{ N_{(1)}^{2 + 2\alpha} N_{1}^{6\alpha + 6\sigma - 1 }  N_{3}^{2\alpha + 2\sigma - 1}}
        \end{align*}
        with probability $\geq 1 - \varepsilon$, since the maximum is bounded by $N_{(2)} N_{(3)} N_{(1)}^{\varepsilon_1} $.  

            \item If $u_2 u_3 = DR$, then we can assume $N_{(1)} = N_3$, and proceeding as above we get
        \begin{align*}
             \eqref{eq:3pairings-n1n2n3}  &\lesssim \Big( \log \frac{1}{\varepsilon} \Big)^{|\mathcal{R}|} \frac{N_{(1)}^{\varepsilon_1} }{ N_1^{6\alpha + 6 \sigma} N_2^{ 2\alpha  + 2\sigma } N_3^{2 + 2\alpha}} \, 
             \max\{\|h^b_{\mathcal{W}_3}\|_{n_1n_2n_3 \rightarrow n}^2 , \|h^b_{\mathcal{W}_3}\|_{n_1n_2 \rightarrow nn_3}^2 \} \\
             &\lesssim \Big(\log  \frac{1}{\varepsilon} \Big)^{|\mathcal R|} \frac{N_{(1)}^{2\varepsilon_1}}{ N_{(1)}^{1 + 2\alpha} N_{(2)} N_{1}^{6\alpha + 6\sigma - 1}  N_{2}^{2\alpha + 2\sigma - 1}}.
         \end{align*}
         with probability $\geq 1 - \varepsilon$, since the maximum is bounded by $N_3 N_{(3)}^{1+\varepsilon_1}$. 
        \end{itemize}      

    Summing up, with probability $\geq 1 - \varepsilon$ we always get the bound 
    \begin{align}
         \eqref{eq:3pairings-n1n2n3}
         \lesssim \Big(\log  \frac{1}{\varepsilon} \Big)^{|\mathcal R|}  \frac{N_{(1)}^{3\varepsilon_1}}{N_{(1)}^{1 + 2\alpha} \, N_{(2)}\,  N_{(3)}^{2\alpha + 2\sigma - 1}}.
    \end{align}

    \end{itemize}
    
    \subsection{$4$-pairings: $\boldsymbol{\mathcal{Q}_7}$, $\boldsymbol{\mathcal{Q}_8}$, $\boldsymbol{\mathcal{Q}_9}$ and $\boldsymbol{\mathcal{Q}_{10}}$}\label{sec:smoothing-4pairings} 
    Again dropping constants from \eqref{eq:qnl-fcoeff}, we have 
    \begin{align}
        \begin{array}{ll}
        & \\
        \mathcal{Q}_7(u)_n = u_n^3 \, \sum_{ \substack{ n_2 + n_4 = 2n \\ n_2 \neq n} } \overline{ u_{n_2} } \, \overline{ u_{n_4}},  & 
        \qquad \qquad \mathcal{Q}_8(u)_n = 
        \sum_{ \substack{ 2n_1 - n_2 = n \\ n_1 \neq n_2} } u_{n_1}^2 |u_{n_1}|^2 \, \overline{u_{n_2}}, \\
        & \\
       \mathcal{Q}_9(u)_n = 
        |u_n|^2 \overline{u_n} \sum_{ \substack{ n_1 + n_3 = 2n \\ n_1 \neq n } } u_{n_1} u_{n_3},  & \qquad \qquad \mathcal{Q}_{10}(u)_n = |u_n|^2 u_n. 
    \end{array}
    \end{align}

    \subsubsection{$\mathcal Q_7$ and $\mathcal Q_9$} 
    Both operators are analogous and studied in a similar way. 
    We only work with 
    \begin{align}\label{eq:4pairings-n1n3n5n}
        \sum_{|n| \simeq N}  \big| \mathcal{Q}_7(u_1, \ldots, u_5)_n \big|^2
        =
        \sum_n \big|(u_1)_n(u_3)_n(u_5)_n \big|^2 \Bigg| \sum_{ n_2, n_4} h^b_{41} \,  \overline{(u_2)_{n_2}} \, \overline{(u_4)_{n_4}} \Bigg|^2,
    \end{align}
    where
    \begin{align*}
        h^b_{41} := h^b_{41}(n_2,n_4,n) 
        = \mathbbm 1_{|n_2| \simeq N_2}
        \mathbbm 1_{|n_4| \simeq N_4}
        \mathbbm 1_{n_2 + n_4 = 2n}
        \mathbbm 1_{n_2^2 + n_4^2 = 2n^2 + \mu}
        \mathbbm 1_{n_2 \neq n}.
    \end{align*}
    By symmetry, we can assume $N_4 \leq N_2$.  Hence, we always have $N \lesssim N_2$, so $N_{(1)} = N_2$ and by Proposition~\ref{prop:Qsmoothing-mest} we reduce to $u_2 = R$. By \eqref{eq:3pairings-n2n4n5-n2}, with probability $\geq 1 - \varepsilon$ we get
    \begin{align}\label{eq:eq:4pairings-n1n3n5n_2}
        \eqref{eq:4pairings-n1n3n5n}
        \lesssim 
        \Big( \log \frac{1}{\varepsilon} \Big)^3 \frac{N^{\varepsilon_1}}{N^{6\alpha + 6\sigma}}
        \sum_n \bigg| \sum_{ n_2, n_4} h^b_{41} \,  \overline{R_{n_2}} \, \overline{(u_4)_{n_4}} \bigg|^2.
    \end{align}
    We distinguish two cases.
    \begin{itemize}
        \item Case $u_4 = D$. By the random tensor estimate in Theorem \ref{thm:Random_Tensor}
        we get
        \begin{align}
            \sum_n \bigg| \sum_{ n_4} \Big( \sum_{n_2} h^b_{41} \,  \overline{R_{n_2}} \Big) \overline{D_{n_4}} \bigg|^2
            & \lesssim
            \Big( \log \frac{1}{\varepsilon} \Big)
            \frac{N_2^{\varepsilon_1}}{N_2^{2 + 2\alpha}N_4^{ 2\alpha + 2\sigma}} \, \max(\big\lVert h^b_{41}  \big\rVert_{n_2n_4 \to n}^2,  \big\lVert h^b_{41}  \big\rVert_{n_4 \to nn_2}^2) \\
            & \lesssim 
            \Big( \log \frac{1}{\varepsilon} \Big) \frac{N_2^{2\varepsilon_1} N_4^{\varepsilon_1}}{N_2^{2 + 2\alpha}N_4^{2\alpha + 2\sigma }}, 
        \end{align}
        where the last inequality follows from the sphere counting in Lemma \ref{lemma:combest-nopairings-cubic}.

        \item Case $u_4 = R$. The expectation is 
        \begin{align}
            \mathbb E \sum_n \Bigg| \sum_{ n_2, n_4} h^b_{41} \,  \overline{R_{n_2}} \, \overline{R_{n_4}} \Bigg|^2 
            \simeq \frac{1}{(N_2 N_4)^{2 + 2\alpha}} \sum_n \sum_{n_2, n_4} h^b_{41} \lesssim \frac{N_2^{\varepsilon_1}}{N_2^{2+2\alpha} N_4^{2\alpha}},
        \end{align}
        where the last inequality holds because
        \begin{align*}
            |\{ (n_2,n_4) \, \in \mathbb{Z}^2 :  \, |n_2| \simeq N_2, \, |n_4| \simeq N_4,  \, |n_2 - n_4|^2 = 2\mu \, \}| \lesssim N_2^{\varepsilon_1} N_4^2, 
        \end{align*}
        which follows from Lemma~\ref{lemma:combest-nopairings-cubic}. 
        Hence, by Proposition \ref{lemma:wiener-chaos}, with probability $\geq 1-\varepsilon$ we get
        \begin{equation}\label{eq:wiener-app4}
            \sum_n \Bigg| \sum_{ n_2, n_4} h^b_{41} \,  \overline{R_{n_2}} \, \overline{R_{n_4}} \Bigg|^2
            \lesssim 
            \Big( \log\frac{1}{\varepsilon} \Big)^2 \frac{N_2^{2\varepsilon_1}}{N_2^{2+2\alpha} N_4^{2\alpha}}.
        \end{equation}
    \end{itemize}
    The largest contribution being from the $u_4 = R$ case, joining it with \eqref{eq:eq:4pairings-n1n3n5n_2} we get
    \begin{align}
        \eqref{eq:4pairings-n1n3n5n}
        \lesssim \Big( \log \frac{1}{\varepsilon} \Big)^{|\mathcal R|} \frac{N_{(1)}^{3\varepsilon_1}}{N^{6\alpha + 6\sigma} N_{(1)}^{2 + 2\alpha} N_{4}^{2\alpha}}.
    \end{align}

    \subsubsection{$\mathcal Q_8$}
    In this case, we estimate
    \begin{align}\label{eq:4pairings-n1n3n4n5}
        \sum_{|n| \simeq N}  \big| \mathcal{Q}_8(u_1, \ldots, u_5)_n \big|^2
        = \sum_n \bigg| \sum_{ n_1, n_2} h^b_{42} \, (u_1)_{n_1} (u_3)_{n_1} \overline{(u_4)_{n_1}} (u_5)_{n_1} \overline{(u_2)_{n_2}} \bigg|^2.
    \end{align}
    where the base tensor is
    \begin{align*}
        h^b_{42} 
        = \mathbbm 1_{|n_1| \simeq N_1} 
        \mathbbm 1_{|n_2| \simeq N_2}
        \mathbbm 1_{2n_1 - n_2 = n}
        \mathbbm 1_{2n_1^2 - n_2^2 = n^2 - \mu}
        \mathbbm 1_{n_1 \neq n}.
    \end{align*}
    We again separate cases.
    \begin{itemize}
        \item Case $N_1 = N_{(1)}$. From Proposition \ref{prop:Qsmoothing-mest}, we may assume $u_1 u_3 \overline{u_4}u_5 = RR\overline R R$, and we separate cases in $u_2$. 

        $-$ If $u_2 = R$, then  
        \begin{align}
            \mathbb E \sum_n \Bigg| \sum_{ n_1, n_2} h^b_{42} \, R_{n_1}^2 |R_{n_1}|^2 \overline{R_{n_2}} \Bigg|^2 
            \simeq \frac{1}{N_1^{8+8\alpha} N_2^{2+2\alpha}} \sum_n \sum_{n_1, n_2} h^b_{42} 
            \lesssim 
            \frac{N_1^{\varepsilon_1}}{N_1^{8+8\alpha} N_2^{2\alpha}} 
        \end{align}
        since the last sum corresponds to the sphere-counting estimate
        \begin{align}\label{eq:sphere-count-hb42}
           |\{ (n_1, n_2) \in \mathbb{Z}^2 \, : \, |n_1 - n_2|^2 = \mu/2 \, , \, \, |n_1| \simeq N_1, \, |n_2| \simeq N_2 \}| \lesssim N_1^{\varepsilon_1} N_2^2.
        \end{align} 
        from Lemma~\ref{lemma:combest-nopairings-cubic}. Hence, using Proposition \ref{lemma:wiener-chaos}, we get with probability $\geq 1-\varepsilon$ that 
        \begin{equation}\label{eq:Tensor42_R}
            \sum_n \Bigg| \sum_{ n_1, n_2} h^b_{42} \, R_{n_1}^2 |R_{n_1}|^2 \overline{R_{n_2}} \Bigg|^2
            \lesssim \Big( \log \frac{1}{\varepsilon} \Big)^5 \frac{N_{(1)}^{2\varepsilon_1}}{N_{(1)}^{8+8\alpha} N_2^{2\alpha}}.
        \end{equation}

        $-$ If $u_2 = D$, then Lemma \ref{lem:pbound-gn} and $\lVert h^b_{42} \rVert_{n_1n_2 \to n}^2 \lesssim N_2$ directly give
        \begin{align}\label{eq:Tensor42_D}
                \sum_n \Bigg| \sum_{ n_1, n_2} h^b_{42} \, R_{n_1}^2 |R_{n_1}|^2 \overline{D_{n_2}} \Bigg|^2
                & \leq \lVert h^b_{42} \rVert_{n_1n_2 \to n}^2 \lVert R_{n_1}^4 \rVert_{n_1}^2 \, 
                \lVert D_{n_2} \rVert_{n_2}^2
                \lesssim \Big( \log \frac{1}{\varepsilon} \Big)^4  \frac{N_{(1)}^{\varepsilon_1}}{N_{(1)}^{6+8\alpha} N_2^{2\alpha + 2\sigma - 1}}
        \end{align}
        with probability $\geq 1-\varepsilon$.

        \item Case $N_2 = N_{(1)} \gg N_1$. In this case, $u_1 u_3 \overline{u_4} u_5$ is any combination of $R$ and $D$. With the analogue to \eqref{eq:3pairings-n2n4n5-n2} with four factors, using  Theorem \ref{thm:Random_Tensor} and Lemma \ref{lemma:combest-nopairings-cubic} we get
        \begin{align*}
            \eqref{eq:4pairings-n1n3n4n5} & \lesssim \Big\|\sum_{n_2} h^b_{42} \overline{R_{n_2}} \Big\|_{n_1 \rightarrow n}^2 \big\|(u_1)_{n_1} (u_3)_{n_3} \overline{(u_4)_{n_4}} (u_5)_{n_5} \big\|_{n_1}^2 \\
            &\lesssim \Big( \log \frac{1}{\varepsilon} \Big)^{|\mathcal{R}|} \frac{N_{(1)}^{2\varepsilon_1}}{N_2^{2 + 2\alpha} N_1^{8\alpha + 8 \sigma}} \max\{ \| h^b_{42} \|_{n_2n_1 \rightarrow n}^2 , \| h^b_{42} \|_{n_1 \rightarrow nn_2}^2 \} \\
            &\lesssim \Big( \log \frac{1}{\varepsilon} \Big)^{|\mathcal{R}|} \frac{N_{(1)}^{3\varepsilon_1}}{N_{(1)}^{2 + 2\alpha} N_1^{8 \alpha + 8\sigma - 1}}.
        \end{align*}
    \end{itemize}
    Summing up, the largest bound coming from $N_2 \gg N_1$, with probability $\geq 1 - \varepsilon$ we always get 
    \begin{equation}
        \eqref{eq:4pairings-n1n3n4n5} 
        \lesssim \Big( \log \frac{1}{\varepsilon} \Big)^{|\mathcal R|}  \frac{N_{(1)}^{3\varepsilon_1}}{N_{(1)}^{2 + 2\alpha} N_{(2)}^{2\alpha + 8\sigma - 1 }}.
    \end{equation} 

    \subsubsection{$\mathcal Q_{10}$}
    It corresponds to the cubic completely resonant term, in which case we need to bound
    \begin{align}
        \sum_{|n| \simeq N}  \big| \mathcal{Q}_{10}(u_1, u_2, u_3)_n \big|^2
        = \sum_{|n| \simeq N} \big|(u_1)_n \overline{(u_2)_n} (u_3)_n \big|^2.
    \end{align} 
    By Proposition~\ref{prop:Qsmoothing-mest}, 
    it suffices to work with the case $u_1\overline{u_2} u_3 = R \overline R R$, and in that case, the pointwise bound from Lemma \ref{lem:pbound-gn} directly gives, with probability $\geq 1 - \varepsilon$,
    \begin{align}
        \sum_{|n| \simeq N} |R_n|^6
        \lesssim \Big( \log \frac{1}{\varepsilon} \Big)^3 \frac{N^{\varepsilon_1}}{N^{4+6\alpha}}.
    \end{align} 

    \subsection{Purely resonant case}
    It corresponds to $\mathcal Q_{11}$ in \eqref{eq:qnl-fcoeff}, for which by Proposition~\ref{prop:Qsmoothing-mest} we may assume that $u_1 \overline{u_2} u_3 \overline{ u_4} u_5 = R \overline R R \overline R R$. Hence, with probability $\geq 1 - \varepsilon$,
    \begin{align}
        \sum_{|n| \simeq N}  \big| \mathcal{Q}_{11}(u_1, \ldots, u_5)_n \big|^2
        = \sum_{|n| \simeq N} |R_n|^{10}        
        \lesssim  \Big( \log \frac{1}{\varepsilon} \Big)^5 \frac{N^{\varepsilon_1}}{N^{8 + 10\alpha}}.
    \end{align} 
    
    With all cases covered, the proof of Proposition~\ref{prop:Final_Estimate} is complete.

    \appendix

    \section{Deterministic toolbox}
    \label{app:Deterministic}
    
    Let us first recall the Strichartz estimates for the Schrödinger linear flow on $\mathbb{T}^{d+1}$. 
    Given a function $f$ and a cube $Q$ of side-length $N$, then for any $\varepsilon_1 > 0$ we have
    \begin{align}\label{eq:strichartz}
    \begin{array}{rlll}
        \| P_Q e^{it\Delta}f\|_{L^p_{t,x}(\mathbb{T}^{d+d})} 
        & \lesssim 
        & N^{\varepsilon_1}\|f\|_{L^2_x(\mathbb{T}^d)}, 
        & \qquad p = \frac{2(d+2)}{d}, \\
        \| P_Q e^{it\Delta}f\|_{L^p_{t,x}(\mathbb{T}^{d+1})} 
        & \lesssim 
        & N^{\frac{d}{2} - \frac{d+2}{p}} \|f\|_{L^2_x(\mathbb{T}^d)}, 
        & \qquad p > \frac{2(d+2)}{d}.
    \end{array}
    \end{align}
We also use the following counting estimates throughout this article. 

\begin{lem}[p. 430-432 in \cite{Bourgain1996}]
\label{lemma:combest-nopairings-cubic}
The following are true for every $\varepsilon_1 > 0$. 
\begin{enumerate}
    \item Let $N \in 2^{\mathbb N}$. Then,
    \begin{align}
        \# \big\{ \, n \in \mathbb Z^d \, : \, |n| \lesssim N, \,\, \,  |n - n_0| = \mu \, \big\} \lesssim N^{d-2+\varepsilon_1}, 
        \qquad \forall n_0 \in \mathbb Z^d, \quad \forall \mu \in \mathbb Z. 
    \end{align}

    \item Let $\{N_j\} \subset 2^{\mathbb{N}}$ and $n \in \mathbb{Z}^2$, and let 
    \begin{align*}
        R_n = \bigg\{ (n_1,n_2,n_3) \in (\mathbb{Z}^2)^3 \, : \,  |n_j| \simeq N_j, \, n_2 \neq n_1 , n_3, \, 
        \begin{array}{ll}
            n_1 - n_2 + n_3 = n \\
            n_1^2 - n_2^2 + n_3^2 = n^2 + \mu.
        \end{array}
        \bigg\}.
    \end{align*}
    Then, 
    \begin{align*}
        |R_n| \lesssim  N_2^2 \, \min\{N_1,N_3\}^{\varepsilon_1}
        \qquad \text{ and } \qquad  
        |R_n| \lesssim  N_{(2)}N_{(3)}^{1 + \varepsilon_1}.
    \end{align*}

    \item For $n_1, n_2 \in \mathbb Z^2$, let also
    \begin{align*}
        \begin{array}{ll}
            S(n_1) = \{ (n_2,n_3) \in (\mathbb{Z}^2)^2 \,\, : \, \, |n_j| \simeq N_j, \, \, n_2 \neq n_1, n_3, \, \,  (n_1 - n_2) \cdot (n_2 - n_3) = \mu \, \}, \\
            S(n_2) = \{ (n_1,n_3) \in (\mathbb{Z}^2)^2 \, \, : \, \, |n_j| \simeq N_j, \, \, n_2 \neq n_1, n_3, \, \, \, (n_1 - n_2) \cdot (n_2 - n_3) = \mu \, \}. 
        \end{array}
    \end{align*}
    Then,
    \begin{align}
        |S(n_1)| \lesssim N_2 N_3 \, \min\{ N_2 , N_3 \}^{\varepsilon_1}, \qquad \text{ and } \qquad 
        |S(n_1)| \lesssim N_3^2 \, N_2^{\varepsilon_1},
    \end{align}
    and 
    \begin{align}
        |S(n_2)| \lesssim N_1N_3 \, \min\{N_1 , N_3\}^{\varepsilon_1}. 
    \end{align}
    
\end{enumerate}
\end{lem}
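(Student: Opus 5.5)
The plan is to reduce every count to two elementary facts in $\mathbb Z^2$: the divisor bound, and the trivial count of lattice points on a line segment. The divisor bound says a circle $\{x\in\mathbb Z^2:|x-x_0|^2=\rho\}$ of radius $\lesssim N$, or a hyperbola $\{x_2y_2=k\}$ with $k\neq 0$ and $|x_2|,|y_2|\lesssim N$, carries $\lesssim N^{\varepsilon_1}$ lattice points. A segment of length $L$ carries $\lesssim 1+L$ lattice points.

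\textbf{Part (1).} I read $|n-n_0|=\mu$ as $|n-n_0|^2=\mu$. I fix the first $d-2$ coordinates of $n$, which costs $\lesssim N^{d-2}$. The remaining two coordinates then lie on a circle of radius $\lesssim N$, hence in at most $N^{\varepsilon_1}$ positions by the divisor bound.

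\textbf{Part (3).} Write $a=n_1-n_2$ and $b=n_2-n_3$, so the constraint is $a\cdot b=\mu$.
\begin{itemize}
\item For $S(n_1)$ I first fix $a_1$ and $b_1$, at cost $\lesssim N_2N_3$. Then $a_2b_2=\mu-a_1b_1$ leaves $\lesssim\min\{N_2,N_3\}^{\varepsilon_1}$ choices by the divisor bound.
\item In the degenerate case $\mu=a_1b_1$ one of $a_2,b_2$ vanishes and the other is free. This gives an extra $\lesssim N_2+N_3$ per choice of $(a_1,b_1)$. I must absorb it either by choosing which coordinates to fix according to a rotation of the lattice, or by noting that the equation then forces a one-parameter family that can be counted directly. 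This is a point I would check carefully.
\item The alternative bound $N_3^2N_2^{\varepsilon_1}$ comes from fixing $n_3$ instead. Completing the square, $(n_1-n_2)\cdot(n_2-n_3)=\mu$ is a circle in $n_2$ with center $(n_1+n_3)/2$, so the divisor bound applies.
\item $S(n_2)$ is handled like the first bound for $S(n_1)$: fix $a_1$ and $b_1$, at cost $\lesssim N_1N_3$, then apply the divisor bound.
\end{itemize}

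\textbf{Part (2).} With $n$ fixed, substitute $n_3=n-n_1+n_2$. The quadratic relation becomes $(n_1-n_2)\cdot(n_3-n_2)=c$ for some constant $c=c(\mu)$.
\begin{itemize}
\item If I fix $n_2$, at cost $N_2^2$, then $a=n_1-n_2$ satisfies $a\cdot(m-a)=c$ with $m=n-n_2$ fixed. This is a circle in $a$, and $n_1$ ranges over a ball of radius $N_1$, so the divisor bound gives $\lesssim N_1^{\varepsilon_1}$. By the symmetry $n_1\leftrightarrow n_3$ the same holds with $N_3$, giving $|R_n|\lesssim N_2^2\min\{N_1,N_3\}^{\varepsilon_1}$.
\item If instead I fix $n_1$, then $n_2$ satisfies $(n_1-n_2)\cdot(n_1-n)=c$. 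Here $n_1\neq n$, since $n_1=n$ would force $n_3=n_2$, which is excluded. So $n_2$ lies on a line, with $\lesssim 1+\min\{N_2,N_3\}$ choices.
\end{itemize}
These two options already give $N_{(2)}N_{(3)}^{1+\varepsilon_1}$ whenever $N_2=N_{(3)}$.

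\textbf{Main obstacle.} The hard case for the bound $N_{(2)}N_{(3)}^{1+\varepsilon_1}$ is when the smallest frequency is $N_1$ (or $N_3$). There fixing $n_1$ naively yields $N_{(3)}^2N_{(2)}$, too large by a factor $N_{(3)}^{1-\varepsilon_1}$. My plan is to pass to the coordinates $a=n_1-n_2$ and $b=n_3-n_2$ with $a+b=m$ fixed. I would fix only one coordinate of the small variable ($\lesssim N_{(3)}$ choices) and one coordinate of the mid-size variable ($\lesssim N_{(2)}$ choices). The remaining pair of unknowns then satisfies a single quadratic equation of product type $x_2y_2=k$, which the divisor bound controls by $N_{(1)}^{\varepsilon_1}$. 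I expect the degenerate subcases ($k=0$, or a coordinate of $n_1-n$ vanishing) to need a rotation or a separate line count. This is the step I expect to require the most care, and if the coordinate-fixing scheme fails I would fall back on Bourgain's localization of the large variables to boxes of side $N_{(3)}$.
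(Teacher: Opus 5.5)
The paper gives no proof of this lemma; it only cites Bourgain. So I judge your sketch on its own terms.

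\textbf{The gap: where the $\varepsilon_1$-losses sit.} The divisor bound counts all lattice points on a circle of radius $R$ by $R^{\varepsilon_1}$, and all solutions of $a_2b_2=k\neq 0$ by $|k|^{\varepsilon_1}$. In Parts (2) and (3), $R$ can be of size $N_{(1)}$ and $|k|$ of size $N_{(1)}^2$. So what you actually obtain is $N_{(1)}^{\varepsilon_1}$, not $\min\{N_1,N_3\}^{\varepsilon_1}$, $N_{(3)}^{\varepsilon_1}$, $\min\{N_2,N_3\}^{\varepsilon_1}$ or $N_2^{\varepsilon_1}$.

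For instance, in Part (2) with $n_2$ fixed, the circle for $n_1$ has centre $(n+n_2)/2$ and radius up to $\sim N_{(1)}$. The condition $|n_1|\lesssim N_1$ only places $n_1$ on an arc of length $\lesssim N_1$. The divisor bound says nothing about arcs, so ``$\lesssim N_1^{\varepsilon_1}$'' does not follow.

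The same problem appears elsewhere. It occurs in Part (1) when $|n_0|\gg N$, and in your circle argument for $|S(n_1)|\lesssim N_3^2N_2^{\varepsilon_1}$. It also occurs in Part (3), where $|\mu-a_1b_1|$ can be $\sim N_1(N_2+N_3)$. Likewise, your scheme for the hard case of Part (2) yields $N_{(2)}N_{(3)}N_{(1)}^{\varepsilon_1}$, not $N_{(2)}N_{(3)}^{1+\varepsilon_1}$.

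Two extra inputs repair this, up to replacing $\varepsilon_1$ by $C\varepsilon_1$.
\begin{itemize}
\item Circles: three lattice points on an arc of length $\ell\le R$ of a circle of radius $R$ span a lattice triangle of area $\lesssim \ell^3/R$. A lattice triangle has area $\ge 1/2$, so arcs of length $\ll R^{1/3}$ carry at most two lattice points. Combined with the divisor bound when $R\lesssim N_1^3$, any arc of length $\lesssim N_1$ carries $\lesssim N_1^{3\varepsilon_1}$ of them.
\item Lines: the lattice points of $\{x : x\cdot b=k\}$ are spaced $|b|/\gcd(b)$ apart. Moreover, $\sum_{0\neq b\in B}\gcd(b)/|b|\lesssim K\log K$ over any disk $B$ of radius $K\ge 2$. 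For example, for $N_1=N_{(3)}$ this gives $|R_n|\lesssim\sum_{|n_1|\lesssim N_1}\bigl(1+N_{(2)}\gcd(n-n_1)/|n-n_1|\bigr)\lesssim N_{(2)}N_{(3)}^{1+\varepsilon_1}$.
\end{itemize}
Alternatively, the weaker lemma with $N_{(1)}^{\varepsilon_1}$ throughout is all the paper ever uses. Every later bound already carries a factor $N_{(1)}^{C\varepsilon_1}$, since Theorem~\ref{thm:Random_Tensor} costs $(\log N_{(1)})^{r/2}$.

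\textbf{The degenerate cases you left open do close,} with no rotation. The exclusions remove exactly the bad branches.

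In Part (3), suppose $a_1b_1=\mu\neq0$. There are $\lesssim N_{(1)}^{\varepsilon_1}$ such pairs $(a_1,b_1)$, each with $\lesssim N_2+N_3$ completions. If $\mu=0$, the branches $a_1=a_2=0$ and $b_1=b_2=0$ are $n_2=n_1$ and $n_2=n_3$, which are excluded. The mixed branches leave one free coordinate of $a$ and one of $b$, hence $\lesssim N_2N_3$ (resp.\ $N_1N_3$).

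In the hard case of Part (2), note that $a+b=n-n_2$ is \emph{not} fixed once $n_2$ is free. What makes your coordinate scheme work is that $b=n_3-n_2=n-n_1$ depends on $n_1$ alone.
\begin{itemize}
\item Fixing $(n_1)_1$ and $(n_2)_1$ (equivalently $(n_3)_1$) costs $\lesssim N_1\min\{N_2,N_3\}=N_{(3)}N_{(2)}$, and it fixes $a_1,b_1$.
\item The map $((n_1)_2,(n_2)_2)\mapsto(a_2,b_2)$ is a bijection. So you are left with $a_2b_2=c-a_1b_1$, exactly as in Part (3).
\item The excluded branches are now $a=0$ ($n_1=n_2$) and $b=0$ ($n_1=n$, i.e.\ $n_3=n_2$).
\end{itemize}
So the fallback to box localization is unnecessary. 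The combinatorial skeleton is sound; what it proves is the lemma with $N_{(1)}^{\varepsilon_1}$ in place of the localized losses.
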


We will combine this counting lemma with the Schur test.  

\begin{lem}[Schur test]\label{lemma:schur-test}
Let $X, Y$ be measurable spaces,
and let $T$ be an integral operator 
\begin{equation}
    Tf(x) = \int K(x,y) f(y) \, dy
\end{equation}
with non-negative kernel $K$.
Let $p(x)$ and $q(y)$ be positive functions, and $\alpha, \beta > 0$ such that 
\begin{equation}\label{eq:hyp-schur}
    \int K(x,y) p(x) \, dx \leq \alpha q(y), 
    \qquad \qquad 
    \int K(x,y) q(y) \, dy \leq \beta p(x), 
\end{equation}
for almost all $x\in X$ and $y \in Y$. Then, $T: L^2(Y) \to L^2(X)$ is a bounded operator with
\begin{equation}
    \lVert T \rVert_{L^2(Y) \to L^2(X)} \leq \sqrt{\alpha \beta}.
\end{equation}
\end{lem}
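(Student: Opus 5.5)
We prove the Schur test by Cauchy--Schwarz with the weights $p$ and $q$ inserted; no earlier result of the paper is needed. Fix $f \in L^2(Y)$. Since $K \geq 0$ and $q > 0$, for almost every $x$ we split $K = K^{1/2} q^{1/2} \cdot K^{1/2} q^{-1/2}$ and apply Cauchy--Schwarz in $y$:
\begin{equation}
    |Tf(x)|^2 \leq \Big( \int K(x,y)\, q(y)\, dy \Big) \Big( \int K(x,y)\, \frac{|f(y)|^2}{q(y)}\, dy \Big) \leq \beta\, p(x) \int K(x,y)\, \frac{|f(y)|^2}{q(y)}\, dy,
\end{equation}
where the second inequality is the second hypothesis in \eqref{eq:hyp-schur}.

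Next we integrate in $x$ and exchange the order of integration, which Tonelli justifies because every integrand is non-negative. The first hypothesis in \eqref{eq:hyp-schur} then gives
\begin{equation}
    \lVert Tf \rVert_{L^2(X)}^2 \leq \beta \int \frac{|f(y)|^2}{q(y)} \Big( \int K(x,y)\, p(x)\, dx \Big) dy \leq \alpha \beta \int |f(y)|^2 \, dy = \alpha\beta\, \lVert f \rVert_{L^2(Y)}^2 .
\end{equation}
Taking square roots yields $\lVert T \rVert_{L^2(Y) \to L^2(X)} \leq \sqrt{\alpha\beta}$.

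The only points needing care are measurability, which follows from the measurability of $K$, $p$ and $q$, and that $Tf(x)$ is well defined. The latter is also covered by the first display: for $|f|$ in place of $f$, the integral $\int K(x,y)|f(y)|\,dy$ is finite for almost every $x$, so $Tf(x)$ converges absolutely there. There is no real obstacle.

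In the paper we use the test with counting measure on the lattice index sets and $p \equiv q \equiv 1$. Then $\alpha$ and $\beta$ are the suprema of the row and column sums of the base tensor $h^b$, and these are exactly the quantities bounded by Lemma~\ref{lemma:combest-nopairings-cubic}.
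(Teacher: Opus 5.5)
Your proof is correct and complete. It is the standard weighted Cauchy--Schwarz argument for the Schur test. The paper does not prove this lemma; it states it as a well-known tool and only applies it with counting measure on countable lattice index sets and $p\equiv q\equiv 1$, exactly as you describe.

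The one tacit assumption in your argument is $\sigma$-finiteness of the measures, which the Tonelli step needs. This holds automatically in the counting-measure setting where the paper uses the test.
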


We next recall some well-known facts about Bourgain's Fourier restriction spaces. 
	
\begin{lem}[\cite{ErdoganTzirakis2016}, Lemmata 3.10, 3.11 and 3.12]
\label{lemma:pre-rest-spaces}
    Let $s \in \mathbb{R}$ and $1/2 < b \leq 1$.  Then
    \begin{align*}
        \begin{array}{rcl}
                \|\eta(t) e^{it\Delta} f(x) \|_{X^{s,b}} & \lesssim & \|f\|_{H^s(\mathbb{T}^2)}, \\
                \displaystyle \Big\| \eta(t) \int_0^t e^{i(t - t')\Delta} F(\cdot,t') dt' \Big\|_{X^{s,b}} & \lesssim & \| F \|_{X^{s,b-1}}.
                \end{array}
        \end{align*}
        Moreover, for  $-1/2  < b' < b < 1/2$ and $\delta >0$, 
        \begin{align}
            \| F \|_{X^{s,b'}_{\delta}} \lesssim  \delta^{b-b'} \|F\|_{X^{s,b}_{\delta}}.
        \end{align}
    \end{lem}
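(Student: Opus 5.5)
The three estimates reduce, frequency by frequency, to one-dimensional statements in the time variable. Conjugating with the free flow, set $G^\sharp(t,x)=e^{-it\Delta}G(t,x)$, so that $\widehat{G^\sharp}(n)(t)$ has time Fourier transform $\widetilde G(\tau-|n|^2,n)$. Then
\begin{equation}
\|G\|_{X^{s,b}}^2=\sum_n \langle n\rangle^{2s}\,\big\|\widehat{G^\sharp}(n)\big\|_{H^b_t}^2 .
\end{equation}
Since $e^{-it\Delta}$ commutes with multiplication by functions of $t$ and with the Duhamel integral, it suffices to prove the scalar bounds in $H^b_t$ for fixed $n$. Then we square, multiply by $\langle n\rangle^{2s}$ and sum.

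\textbf{Free evolution.} For the first bound, $(\eta(t)e^{it\Delta}f)^\sharp=\eta(t)f$. Its $n$-th coefficient is $\eta(t)\widehat f(n)$, so the norm equals $\|\eta\|_{H^b}\|f\|_{H^s}$ exactly.

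\textbf{Duhamel term.} For the second bound, I will show $\|\eta(t)\int_0^t g(t')\,dt'\|_{H^b}\lesssim\|g\|_{H^{b-1}}$ for scalar $g$. Write
\begin{equation}
\int_0^t g=\int\widehat g(\lambda)\,\frac{e^{it\lambda}-1}{i\lambda}\,d\lambda
\end{equation}
and split into $|\lambda|\le1$ and $|\lambda|>1$.

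For $|\lambda|\le 1$, expand $(e^{it\lambda}-1)/(i\lambda)=\sum_{k\ge1}(it)^k\lambda^{k-1}/k!$. Each term contributes at most $\|t^k\eta\|_{H^b}/k!$ times $\int_{|\lambda|\le1}|\widehat g|$. The latter is $\lesssim\|g\|_{H^{b-1}}$ by Cauchy--Schwarz, and the series in $k$ converges because $\|t^k\eta\|_{H^b}\lesssim C^k$ for $\eta\in C_c^\infty$.

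For $|\lambda|>1$ there are two pieces. The first is $\eta(t)\,h(t)$ with $\widehat h=\mathbbm 1_{|\lambda|>1}\widehat g/(i\lambda)$. Here $\|h\|_{H^b}\lesssim\|g\|_{H^{b-1}}$, and multiplication by a Schwartz function is bounded on $H^b$. The second is $\eta(t)\int_{|\lambda|>1}\widehat g/\lambda$. Its scalar factor is bounded by $\|g\|_{H^{b-1}}\big(\int\langle\lambda\rangle^{-2b}\big)^{1/2}$, which is finite precisely because $b>1/2$.

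\textbf{Time localization.} For the third bound, take an extension $G$ of $F$ from $[0,\delta]$ with $\|G\|_{X^{s,b}}\le 2\|F\|_{X^{s,b}_\delta}$. Then $\eta(t/\delta)G$ is also an extension of $F$, because $\eta=1$ on $[0,1]$. It therefore suffices to prove the scalar estimate
\begin{equation}
\|\eta(t/\delta)g\|_{H^{b'}}\lesssim\delta^{b-b'}\|g\|_{H^b},\qquad -\tfrac12<b'<b<\tfrac12 .
\end{equation}

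I first treat $0\le b'\le b$. At the endpoint $b'=0$, use Hölder and Sobolev embedding:
\begin{equation}
\|\eta(t/\delta)g\|_{L^2}\le\|\eta(t/\delta)\|_{L^{1/b}}\|g\|_{L^{2/(1-2b)}}\lesssim\delta^{b}\|g\|_{H^b}.
\end{equation}
At the other endpoint $b'=b$, I need multiplication by $\eta(t/\delta)$ to be bounded on $H^b$ uniformly in $\delta\in(0,1]$.

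\textbf{Main obstacle.} This uniform multiplier bound is where $|b|<1/2$ is essential. I would prove it for $0\le b<1/2$ via the physical-space characterization of the $\dot H^b$ seminorm by $\iint|g(t)-g(t')|^2|t-t'|^{-1-2b}$. The commutator piece $|\eta(t/\delta)-\eta(t'/\delta)|$ is handled by splitting $|t-t'|\lessgtr\delta$. What remains is controlled by $\int|g|^2w_\delta$ with a weight $w_\delta\lesssim|t|^{-2b}$, and Hardy's inequality (valid for $b<1/2$) bounds this by $\|g\|_{\dot H^b}^2$.

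\textbf{Interpolation and duality.} Complex interpolation between these two endpoints gives $H^b\to H^{b'}$ with constant $\delta^{b-b'}$ for $0\le b'\le b$. The remaining sign configurations follow by duality, since multiplication by the real function $\eta(t/\delta)$ is self-adjoint. This covers $b'<b\le0$ from the mapping $H^{-b'}\to H^{-b}$, and the case $b'<0<b$ by composing through $L^2$. The uniform $H^b$ multiplier bound and the $L^2$ endpoint require the same Hardy argument for negative exponents via duality.
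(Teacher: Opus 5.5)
Your argument is correct and is essentially the standard proof behind Lemmata 3.10--3.12 of Erdo\u{g}an--Tzirakis, which the paper quotes without proof. Conjugation by $e^{-it\Delta}$ reduces everything to scalar $H^b_t$ estimates, and the Duhamel term is handled by splitting into $|\lambda|\le 1$ and $|\lambda|>1$; the Taylor coefficient should read $i^{k-1}t^k\lambda^{k-1}/k!$, but this slip is harmless. The time localization follows from the H\"older--Sobolev bound at $b'=0$, a uniform $H^b$ multiplier bound, interpolation and duality. Your one departure from the usual route is proving the uniform bound for multiplication by $\eta(t/\delta)$ on $H^b$, $0\le b<1/2$, via the Gagliardo seminorm and the fractional Hardy inequality rather than fractional Leibniz plus Sobolev, and this works equally well. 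Composing through $L^2$ when $b'<0<b$ actually produces the cutoff $\eta(t/\delta)^2$, but this is still $1$ on $[0,\delta]$, so the restricted-norm conclusion is unaffected.
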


    \noindent
    The $X^{s,b}$ spaces also satisfy the following transference principle adapted from \cite[Lemma 2.9]{Tao2006-book}. 
	\begin{lem}\label{lemma:TransferLemma}
		Let $Y$ be a Banach space of functions 
		$F : (t,x) \in \mathbb{R} \times \mathbb{T}^2 \mapsto \mathbb{C}$. 
		Assume
		\begin{align*}
			\| e^{i \alpha t} e^{i t \Delta} f \|_Y \leq C \| f \|_{H^s(\mathbb{T}^2)},
            \qquad \qquad \forall \alpha \in \mathbb R,
		\end{align*}
		for some constant $C > 0$ independent of $\alpha \in \mathbb{R}$. Let $b > 1/2$. Then,
		\begin{align*}
			\|F\|_Y \leq C \|F\|_{X^{s,b}}, 
            \qquad \qquad  \forall F \in Y. 
		\end{align*}
	\end{lem}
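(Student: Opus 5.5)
The statement to prove is the transference principle (Lemma~\ref{lemma:TransferLemma}): if $\|e^{i\alpha t}e^{it\Delta}f\|_Y\le C\|f\|_{H^s}$ uniformly in $\alpha\in\mathbb R$, then $\|F\|_Y\lesssim \|F\|_{X^{s,b}}$ for $b>1/2$. The plan is to write $F$ as a superposition of modulated free solutions and then use Minkowski's inequality in the Banach space $Y$.

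\textbf{Step 1: representation.} By Fourier inversion in $(t,x)$,
\begin{align}
F(t,x)=\sum_{n}\int_{\mathbb R}\widetilde F(\tau,n)\,e^{i(nx+t\tau)}\,d\tau .
\end{align}
Substitute $\tau=\lambda-|n|^2$, so that $e^{it\tau}e^{inx}=e^{i\lambda t}\,e^{it\Delta}e^{inx}$. This gives
\begin{align}
F(t,\cdot)=\int_{\mathbb R} e^{i\lambda t}\, e^{it\Delta} f_\lambda\, d\lambda,
\qquad \widehat{f_\lambda}(n)=\widetilde F(\lambda-|n|^2,n).
\end{align}

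\textbf{Step 2: Minkowski and the hypothesis.} Applying Minkowski's inequality for the Bochner integral in $Y$, and then the hypothesis with $\alpha=\lambda$ (which is uniform in $\lambda$), we get
\begin{align}
\|F\|_Y\le \int_{\mathbb R}\|e^{i\lambda t}e^{it\Delta}f_\lambda\|_Y\,d\lambda\le C\int_{\mathbb R}\|f_\lambda\|_{H^s}\,d\lambda .
\end{align}

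\textbf{Step 3: Cauchy--Schwarz in $\lambda$.} Write
\begin{align}
\int\|f_\lambda\|_{H^s}\,d\lambda\le \Big(\int\langle\lambda\rangle^{-2b}\,d\lambda\Big)^{1/2}\Big(\int\langle\lambda\rangle^{2b}\|f_\lambda\|_{H^s}^2\,d\lambda\Big)^{1/2}.
\end{align}
The first factor is finite exactly because $b>1/2$. For the second, note that
\begin{align}
\|f_\lambda\|_{H^s}^2=\sum_n\langle n\rangle^{2s}\,|\widetilde F(\lambda-|n|^2,n)|^2 .
\end{align}
Undoing the change of variables, with $\langle\lambda\rangle=\langle\tau+|n|^2\rangle$, the second factor equals exactly $\|F\|_{X^{s,b}}$. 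Together these give $\|F\|_Y\le C_b\,C\,\|F\|_{X^{s,b}}$.

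\textbf{Main obstacle.} The only delicate point is justifying the vector-valued integral and the Minkowski step for an arbitrary Banach space $Y$. I would handle this by density. First prove the bound for $F$ with $\widetilde F$ compactly supported and finitely many $n$; then the integral is a limit of Riemann sums in $Y$, and the triangle inequality applies directly. Then extend to general $F$ by approximation. Here one should either assume, as in the applications in the paper ($L^2_xL^\infty_t$, $L^p_{t,x}$), that $Y$ has a Fatou-type property, or simply define the estimate on the dense class and extend by continuity.
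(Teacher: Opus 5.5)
Your proof is correct. It is the standard argument of Tao's Lemma~2.9, which the paper cites instead of giving a proof: write $F=\int e^{i\lambda t}e^{it\Delta}f_\lambda\,d\lambda$ with $\widehat{f_\lambda}(n)=\widetilde F(\lambda-|n|^2,n)$, apply Minkowski in $Y$ with the uniform-in-$\alpha$ hypothesis, and finish with Cauchy--Schwarz in $\lambda$, which is where $b>1/2$ enters. Your bound $\|F\|_Y\le C_b\,C\,\|F\|_{X^{s,b}}$, with $C_b=(\int\langle\lambda\rangle^{-2b}d\lambda)^{1/2}$, is the correct form, and the $b$-dependent factor cannot be dropped: for $Y=L^\infty_tH^s_x$ (where $C=1$) and $F=\eta(t)$ the claim reduces to $H^b(\mathbb R)\hookrightarrow L^\infty$, whose constant blows up as $b\to1/2$, so the statement's literal ``$C$'' should be read as $\lesssim_b C$.
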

    Combining it with  with Strichartz estimates \eqref{eq:strichartz} and maximal estimates \eqref{eq:max-est}, we obtain the following embeddings for every $\varepsilon_1 > 0$ and $b > 1/2$:
    \begin{align}\label{eq:Transferred_Estimates}
        \lVert F \rVert_{L^4_{t,x}(\mathbb T \times \mathbb T^2)} \lesssim \lVert F \rVert_{X^{\varepsilon_1, b}} 
        \qquad \text{ and } \qquad 
        \big\lVert \sup_{0 \leq t \leq 1} |F| \big\rVert_{L^2_{x}(\mathbb T^2)} \lesssim \lVert F \rVert_{X^{\frac12 + \varepsilon_1, b}}.
    \end{align}
    In particular, for every cube $Q \subset \mathbb T^2$ of side-length $N$ we have 
    \begin{align}\label{eq:Transferred_Estimates_Cube}
        \lVert P_Q F \rVert_{L^4_{t,x}(\mathbb T \times \mathbb T^2)} \lesssim N^{\varepsilon_1} \, \lVert F \rVert_{X^{0, b}}.
    \end{align}
    The time-regularity in this estimate may be slightly improved, which plays an important role in the proof of Lemma \ref{lemma:Multilinear_Estimates}.  

    \begin{lem}\label{lemma:st-int}
    Let $N \in \mathbb N$ and $Q$ a cube of side-length $N$. For every $\varepsilon_1 > 0$ small enough, we have
    \begin{align*}
        \|P_Qu\|_{L^4_{t,x}( \mathbb T \times \mathbb{T}^2)} \lesssim_{\varepsilon_1} N^{2\varepsilon_1} \|u\|_{X^{0,\frac{1}{2} - \varepsilon_1}}.
    \end{align*}
\end{lem}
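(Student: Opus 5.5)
The plan is to interpolate between the trivial $L^2$ bound and the $L^4$ Strichartz bound \eqref{eq:Transferred_Estimates_Cube}, viewing them as a family of estimates in the modulation exponent $b$. The key steps, in order, are as follows.

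First, record the two endpoint estimates:
\begin{align}
\|P_Q u\|_{L^4_{t,x}} \lesssim_{\varepsilon_0} N^{\varepsilon_0}\|u\|_{X^{0,b_0}} \quad (b_0 = \tfrac12+\varepsilon_0),
\qquad\qquad
\|P_Q u\|_{L^2_{t,x}} = \|P_Q u\|_{X^{0,0}}.
\end{align}
The first is \eqref{eq:Transferred_Estimates_Cube} with $b=\tfrac12+\varepsilon_0$. The second is Plancherel.

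Second, apply complex interpolation for the fixed linear operator $P_Q$ acting from the weighted $L^2$ spaces $X^{0,b}$ into $L^p_{t,x}$. For $\theta\in(0,1)$ this gives
\begin{equation}
\|P_Q u\|_{L^{p_\theta}_{t,x}} \lesssim N^{\theta\varepsilon_0}\|u\|_{X^{0,\theta b_0}},
\qquad \frac{1}{p_\theta} = \frac{1-\theta}{2}+\frac{\theta}{4}.
\end{equation}
Taking $\theta$ slightly below $1$ makes $\theta b_0 = \tfrac12-\varepsilon_1$, at the cost of having $p_\theta$ slightly below $4$. The precise relation is $1-\theta \approx 2(\varepsilon_0+\varepsilon_1)$.

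Third, recover the loss in the Lebesgue exponent. I would handle this in the cleanest way I see, namely interpolating instead against the $L^\infty$ endpoint, since the cube is frequency-localized. By Bernstein in $x$ and Sobolev in $t$,
\begin{equation}
\|P_Q u\|_{L^\infty_{t,x}} \lesssim N\,\|u\|_{X^{0,\frac12+\varepsilon_0}}.
\end{equation}
This is more regular in $b$ than we want, however, so the $L^\infty$ bound should be combined with the $L^2$ bound first. Interpolating $L^2$ (at $b=0$) with $L^\infty$ (at $b=\tfrac12+\varepsilon_0$) with weight $\theta'$ yields an $L^q$ estimate with $q>4$ but a loss $N^{\theta'}$ that is not small. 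So the efficient route is: write $L^4$ as an interpolation between $L^{p_\theta}$ ($p_\theta<4$, which requires $b<\tfrac12$) and an $L^q$ estimate with $q$ large, obtained from \eqref{eq:strichartz} in the regime $p > 4$ (loss $N^{1-4/q}$) with $b=\tfrac12+\varepsilon_0$. Choosing the exponents so that the resulting $b$ equals $\tfrac12-\varepsilon_1$ and the total power of $N$ is at most $2\varepsilon_1$ is a linear bookkeeping exercise. The losses are of order $(1-\theta)$ times the power $1-4/q$, so they can be made $O(\varepsilon_1)$. Alternatively, one can simply perform a single bilinear interpolation among the three estimates ($L^2$/$X^{0,0}$, $L^4$/$X^{0,\frac12+}$ with loss $N^{0+}$, and $L^q$/$X^{0,\frac12+}$ with loss $N^{1-4/q}$) via Riesz--Thorin/Stein on the Lebesgue side and Calderón on the $X^{0,b}$ side.

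The main obstacle I anticipate is this exponent bookkeeping. Dropping $b$ below $\tfrac12$ forces a Lebesgue exponent below $4$. Restoring exactly $L^4$ then requires mixing in a higher-integrability Strichartz estimate, whose $N$-loss must be weighted by a factor of order $\varepsilon_1$. I would verify explicitly that one can choose $q$ (say $q=8$, which has loss $N^{1/2}$) and the interpolation weights so that the net loss is $N^{2\varepsilon_1}$. Throughout, the constants from \eqref{eq:strichartz} are uniform over cubes of side length $N$, by Galilean invariance, so the bound is uniform in $Q$.
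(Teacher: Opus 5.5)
Your argument is correct, but it is organized differently from the paper's. The paper never changes the Lebesgue exponent. At fixed target $L^4_{t,x}$, it interpolates only in the modulation weight, between \eqref{eq:Transferred_Estimates_Cube} at $b=\frac12+\varepsilon_0$ (loss $N^{\varepsilon_0}$) and the crude bound $\|P_Qu\|_{L^4_{t,x}}\lesssim N^{1/2}\|u\|_{X^{0,\frac14+}}$. That bound comes from Hausdorff--Young and H\"older, since $\langle\tau\rangle^{-b}\in L^4_\tau$ for $b>\frac14$ and $Q$ contains $\simeq N^2$ lattice points.

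Your three-point scheme also closes. Put weights $a,\beta,\gamma$ on the $L^2$, $L^4$ and $L^q$ endpoints. Landing on $L^4$ forces $a=\gamma(1-\frac4q)$. Landing on $b=\frac12-\varepsilon_1$ forces $a=(\varepsilon_0+\varepsilon_1)/(\frac12+\varepsilon_0)$. The resulting power of $N$ is $\beta\varepsilon_0+\gamma(1-\frac4q)=\beta\varepsilon_0+a\le 2\varepsilon_1+3\varepsilon_0$.

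Two remarks on your bookkeeping. First, the $L^q$ endpoint contributes exactly $a$ to the loss, independently of $q$ (not ``$(1-\theta)$ times $1-\frac4q$''). So the $p>4$ Strichartz estimates are not needed, and your elementary bound $\|P_Qu\|_{L^\infty_{t,x}}\lesssim N\|u\|_{X^{0,\frac12+}}$ works just as well. In fact, the $L^2$--$L^\infty$ interpolation at weight $\frac12$ that you discarded as too lossy is exactly the paper's auxiliary estimate: $L^4$ at $b=\frac14+$ with loss $N^{1/2}$. Its large loss is harmless because it enters only with weight $O(\varepsilon_0+\varepsilon_1)$. Second, neither route gives literally $N^{2\varepsilon_1}$, only $N^{2\varepsilon_1+O(\varepsilon_0)}$; this is immaterial after relabeling $\varepsilon_1$.

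The paper's version buys economy: a single interpolation on the domain side, a fixed target space, and no input beyond the $L^4$ Strichartz bound. Yours shows the same convexity from a larger family of endpoints, at the price of interpolating the target Lebesgue spaces as well.
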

\begin{proof}
    It follows by interpolating \eqref{eq:Transferred_Estimates_Cube} with the estimate
    \begin{align}
        \lVert P_Q u \rVert_{L^4_{t,x}(\mathbb  T \times \mathbb T^2)} \lesssim N^{1/2} \, \lVert P_Q u \rVert_{X^{0,\frac14 + \varepsilon_1}(\mathbb  T \times \mathbb T^2)},
    \end{align}
    that follows directly by Hausdorff-Young and H\"older inequalities. 
\end{proof}

\section{Probabilistic toolbox}
    \label{app:probprel}  
    Let $g^{\omega}$ be a standard Gaussian random variable.
    By definition, for $\varepsilon > 0$ small enough, there exists a set $A_\varepsilon$ in the probability space with $\mathbb{P}(A_\varepsilon) \geq 1 - \varepsilon$ such that
        \begin{align}\label{eq:pbounds-gaussian-one}
            |g^{\omega}| \lesssim \Big( \log \frac{1}{\varepsilon} \Big)^{1/2}, \qquad \forall \omega \in A_\varepsilon.
        \end{align}
    Now, let $(g_n^{\omega})_{n \in \mathbb Z^d}$ be a family of i.i.d. standard Gaussian random variables. 
    If $|n| > 1$, then by \eqref{eq:pbounds-gaussian-one} for every $\varepsilon > 0$ there is a set 
    $A_{n,\varepsilon}$
    such that $\mathbb{P}(A_{n,\varepsilon}) \geq 1 - \varepsilon/|n|^{d+1}$ and
    \begin{align}\label{eq:Gaussian_Uniform}
        |g_n^{\omega}| 
        \lesssim \Big( \log \frac{|n|^{d + 1}}{\varepsilon} \Big)^{1/2}  \lesssim (\log |n|)^{1/2}  \Big( \log \frac{1}{\varepsilon} \Big)^{1/2}, 
        \qquad \forall \omega \in A_{n,\varepsilon},
    \end{align}
    where the implicit constant does not depend on $n$ nor $\varepsilon$.
    If for $|n| \leq 1$ we define $A_{n,\varepsilon} = A_\varepsilon$ and 
    \begin{align*}
        \Sigma_{\varepsilon} = \bigcap_{n \in \mathbb{Z}^d} A_{n,\varepsilon},
        \qquad \text{ which satisfies } \qquad 
        \mathbb P(\Sigma_{\varepsilon}^c) \lesssim  \sum_{n \in \mathbb{Z}^d} \frac{\varepsilon}{\langle n \rangle^{d + 1}} \lesssim \varepsilon , 
    \end{align*}
    we get the bound \eqref{eq:Gaussian_Uniform} uniformly in $n$ for every $\omega \in \Sigma_\varepsilon$, 
    which we collect in the following lemma. 

    \begin{lem}\label{lem:pbound-gn}
    Let $(g_n^{\omega})_{n \in \mathbb Z^d}$ be i.i.d. standard Gaussian random variables. Then, for every $\varepsilon > 0$,  
        \begin{align}\label{eq:pbound-gn}
        |g_n^{\omega}|
        \lesssim  \Big( \log \frac{1}{\varepsilon} \Big)^{1/2} \, \big( 1 + \log \langle n \rangle \big)^{1/2}, \qquad \qquad \forall n \in \mathbb{Z}^d, 
    \end{align}
    with probability $\geq 1 - \varepsilon$.
    \end{lem}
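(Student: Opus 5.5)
The plan is a union bound over $n\in\mathbb Z^d$ with probabilities weighted to be summable, combined with the Gaussian tail estimate; it is the argument sketched just before the statement, made quantitative.

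First, I record the tail bound for a single standard complex Gaussian $g$: since $|g|^2$ is exponentially distributed (up to normalization), there is $c>0$ such that
\begin{equation*}
    \mathbb P(|g|>\lambda)\le e^{-c\lambda^2}, \qquad \forall \lambda>0.
\end{equation*}
For each $n\in\mathbb Z^d$ and $0<\varepsilon<e^{-1}$, I choose the threshold
\begin{equation*}
    \lambda_n=\Big(c^{-1}\log\frac{\langle n\rangle^{d+1}}{\varepsilon}\Big)^{1/2},
    \qquad \text{so that} \qquad
    \mathbb P(|g_n^\omega|>\lambda_n)\le \frac{\varepsilon}{\langle n\rangle^{d+1}}.
\end{equation*}
Let $\Sigma_\varepsilon=\bigcap_n\{|g_n^\omega|\le\lambda_n\}$. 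By the union bound,
\begin{equation*}
    \mathbb P(\Sigma_\varepsilon^c)\le\varepsilon\sum_{n\in\mathbb Z^d}\langle n\rangle^{-d-1}=C_d\,\varepsilon.
\end{equation*}
Replacing $\varepsilon$ by $\varepsilon/C_d$ only changes $\log(1/\varepsilon)$ by an additive constant, which is absorbed once $\varepsilon$ is small. So $\Sigma_\varepsilon$ has probability $\ge 1-\varepsilon$.

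Second, on $\Sigma_\varepsilon$ I convert the sum of logarithms into the product form of the statement. Write $a=(d+1)\log\langle n\rangle\ge0$ and $b=\log(1/\varepsilon)\ge1$. Then
\begin{equation*}
    a+b\le (1+a)(1+b)\le 2(d+1)\,(1+\log\langle n\rangle)\,\log\frac1\varepsilon .
\end{equation*}
This gives $|g_n^\omega|\lesssim(\log\frac1\varepsilon)^{1/2}(1+\log\langle n\rangle)^{1/2}$ uniformly in $n$, with an implicit constant depending only on $d$.

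Finally, for $\varepsilon$ not small (say $\varepsilon\ge e^{-1}$), I use the set built for $\varepsilon_0=e^{-1}$: its probability is at least $1-\varepsilon_0\ge 1-\varepsilon$, and $\log(1/\varepsilon_0)=1$ is comparable to a constant. If one prefers the bound to degenerate as $\varepsilon\to1$, it can instead be stated for $\varepsilon<1/2$ only.

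I do not expect a genuine obstacle. The only points requiring care are the bookkeeping of constants when rescaling $\varepsilon$, and making sure the tail bound is used for complex rather than real Gaussians, which only affects the constant $c$.
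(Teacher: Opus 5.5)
Your proof is correct and follows the paper's argument: a union bound with weights $\varepsilon\langle n\rangle^{-d-1}$ applied to the Gaussian tail, followed by $\log(\langle n\rangle^{d+1}/\varepsilon)\lesssim(1+\log\langle n\rangle)\log(1/\varepsilon)$. Using $\langle n\rangle$ throughout replaces the paper's separate treatment of $|n|\le 1$, and your explicit rescaling $\varepsilon\mapsto\varepsilon/C_d$ fills in the paper's $\mathbb P(\Sigma_\varepsilon^c)\lesssim\varepsilon$.

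Your restriction to $\varepsilon$ bounded away from $1$ is also necessary, not just convenient. As $\varepsilon\to1$ the literal bound would force $|g_n|\lesssim(1-\varepsilon)^{1/2}$ simultaneously for the $2d+1$ indices with $|n|\le1$. That event has probability $o(1-\varepsilon)$, so the statement fails there. The paper likewise only argues for $\varepsilon$ small.
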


    More complex random variables built upon gaussians but not gaussian themselves appear frequently throughout the article, 
    and they can be bounded in a similar way by hypercontractivity. 
    
    \begin{thm}[Theorem 5.10 in \cite{Janson1997}]\label{lemma:wiener-chaos}
    Let $(g_n^{\omega})_{n \in \mathbb{Z}^d}$ be a sequence of complex-valued, i.i.d. standard Gaussian random variables on $(\Omega,\mathbb{P})$, and let 
    \begin{align*}
        H = \Big\{\sum_{n \in \mathbb{Z}^d} c_n g_n^{\omega} \, : \, c_n \in \mathbb{C} \, \, , \, \sum_{n \in \mathbb{Z}^d} |c_n|^2 < \infty \Big\}
    \end{align*}
    be the corresponding Gaussian Hilbert space. 
    Then, for all $k \in \mathbb N$ and $1 \leq p,q < \infty$ we have
    \begin{align*}
        \|X\|_{L^q(\Omega)} \, \lesssim \, c(p,q)^k \, \|X\|_{L^p(\Omega)}, \qquad \forall X \in \overline{\mathcal{P}_k(H)},
    \end{align*}
    where $\mathcal{P}_k(H)$ are the polynomials on $H$ of degree $k$, $\overline{\mathcal{P}_k(H)}$ denotes its closure in $L^2(\Omega,\mathbb{P})$, and 
    \begin{align}
        c(p,q) \leq  (q-1)^{1/2}, \quad \text{ if } 2 \leq p \leq q < \infty,  \quad 
        \qquad \text{ and } \qquad \quad 
        c(p,2) \leq e^{\frac{2}{p} - 1}, \quad \text{ if } \quad p \leq 2. 
    \end{align}   
\end{thm}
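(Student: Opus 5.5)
The plan is to derive both inequalities from Nelson's hypercontractivity of the Ornstein--Uhlenbeck semigroup, applied to the Wiener chaos decomposition of $X$.

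\textbf{Reduction to a real Gaussian space.} First, since $g_n = (a_n + i b_n)/\sqrt 2$ with $a_n, b_n$ i.i.d. real standard Gaussians, I will regard $H$ as contained in the complexification of the real Gaussian Hilbert space $H_{\mathbb R}$ spanned by $\{a_n, b_n\}$. A complex polynomial of degree $k$ in the $g_n, \overline{g_n}$ is then a polynomial of degree $k$ in the real variables. Hence it suffices to prove the statement for $X \in \overline{\mathcal P_k(H_{\mathbb R})}$ with complex coefficients. Next I write the orthogonal chaos decomposition $X = \sum_{j=0}^k X_j$, where $X_j$ lies in the $j$-th homogeneous chaos $H^{:j:}$, i.e.\ the closure of $\mathcal P_j \ominus \mathcal P_{j-1}$. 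Let $T_t$ be the Ornstein--Uhlenbeck semigroup, which acts on $H^{:j:}$ as multiplication by $e^{-jt}$.

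\textbf{Key input (Nelson).} For $1 < p \le q < \infty$ and $e^{2t} \ge (q-1)/(p-1)$ one has $\|T_t Y\|_{L^q} \le \|Y\|_{L^p}$. I would prove this as follows. By Gross's theorem, hypercontractivity with these constants is equivalent to the Gaussian logarithmic Sobolev inequality with constant $2$. The log-Sobolev inequality tensorizes, so it reduces to one dimension. There it follows either from the two-point inequality on $\{\pm1\}$ combined with the central limit theorem, or directly via the Bakry--\'Emery $\Gamma_2$ argument. I expect this to be the main obstacle; if it is too long, I would simply cite Gross and Nelson.

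\textbf{Case $2 \le p \le q$.} Since $\|X\|_{L^2} \le \|X\|_{L^p}$ for $p \ge 2$, it suffices to take $p = 2$. Choose $e^{t} = (q-1)^{1/2}$ and set $Y = \sum_{j \le k} e^{jt} X_j$, so that $T_t Y = X$. By Nelson's inequality and orthogonality of the chaoses,
\begin{align}
\|X\|_{L^q} \le \|Y\|_{L^2} = \Big(\sum_{j\le k} e^{2jt}\|X_j\|_{L^2}^2\Big)^{1/2} \le (q-1)^{k/2}\|X\|_{L^2}.
\end{align}
This gives $c(p,q) \le (q-1)^{1/2}$.

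\textbf{Case $p \le 2$, $q = 2$.} Fix any $r > 2$ and interpolate between $L^p$ and $L^r$ by H\"older's inequality: $\|X\|_{L^2} \le \|X\|_{L^p}^{\theta}\|X\|_{L^r}^{1-\theta}$, where $\tfrac12 = \tfrac{\theta}{p} + \tfrac{1-\theta}{r}$. The previous case gives $\|X\|_{L^r} \le (r-1)^{k/2}\|X\|_{L^2}$. Substituting and rearranging yields
\begin{align}
\|X\|_{L^2} \le (r-1)^{\frac{k(1-\theta)}{2\theta}}\|X\|_{L^p}.
\end{align}
It remains to optimize over $r$, or equivalently to let $r \to 2^+$ and use the derivative of $\log(r-1)$ at $r = 2$. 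This produces a constant of the form $e^{c(2/p-1)}$ per degree, and tracking the exponent should give the stated $c(p,2) \le e^{2/p-1}$. The bound extends from $\mathcal P_k(H)$ to its $L^2$-closure by Fatou's lemma, which completes the argument.
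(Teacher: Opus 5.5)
Your argument is correct. The paper gives no proof of its own and simply imports the statement from Janson; what you wrote is the standard derivation of that result from Nelson's hypercontractivity: orthogonality of the chaoses after reducing to $p=2$ when $2\le p\le q$, and H\"older interpolation with $r\to 2^+$ when $p<2$.

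The step you left as ``should give'' closes exactly. The exponent per degree is $\bigl(\frac1p-\frac12\bigr)\frac{r\log(r-1)}{r-2}$, which is at least $\frac2p-1$ for every $r>2$ and tends to it as $r\to 2^+$. Two small points remain to make explicit. Nelson's bound covers complex-valued $Y$ because $|T_tY|\le T_t|Y|$. The remaining pairs $(p,q)$ follow by chaining through $L^2$, as the paper itself does when it writes $c(1,q)\le c(1,2)\,c(2,q)$.
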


    By Chebyshev's inequality, Theorem~\ref{lemma:wiener-chaos} implies large deviation estimates of the form
    \begin{align*}
        \mathbb{P}\left( \{ \omega \in \Omega : |X| > \lambda \} \right) \lesssim
        \exp \left[ - \bigg( \frac{ c\,  \lambda}{ \, \|X\|_{L^2(\Omega)} }\bigg)^{2/k} \right], 
        \qquad \text{ for every } \lambda \text{ large enough, } 
    \end{align*}
    for $X \in \overline{\mathcal P_k(H)}$, 
    as long as we can compute the variance $\lVert X \rVert_{L^2(\Omega)}$. 
    Moreover, given that
    \begin{align}
        c(1,q) \leq c(1,2)c(2,q) \leq e(q-1)^{1/2}, \qquad \forall q \geq 2, 
    \end{align}
    we also get the large deviation estimate with $\lVert X \rVert_{L^1(\Omega)}$. 
    We use these bounds in Section~\ref{sec:proof-smoothing}, in particular to bound purely random terms like $RRRRR$ term in Section~\ref{sec:RRRRR}, which has the form
    \begin{align*}
        X = \sum_{|n| \simeq N} \sum_{ \substack{ n_1, \ldots, n_5 \\ n_1', \ldots , n_5'}}  h^b_1(n_1, \ldots, n_5, n) \, h^b_1(n_1', \ldots, n_5',n) \,  g_{n_1} \overline{g_{n_2}}  g_{n_3} \overline{g_{n_4}} g_{n_5} \overline{g_{n_1'}} g_{n_2'} \overline{g_{n_3'}} g_{n_4'} \overline{g_{n_5'}} 
        \in \mathcal{P}_{10}(H)
    \end{align*}
    and similarly in \eqref{eq:wiener-app2}, \eqref{eq:wiener-app3}, \eqref{eq:wiener-app4} and \eqref{eq:Tensor42_R}. 

    A more elementary application 
    are the following probabilistic dispersive and Strichartz estimates for the linear evolution of the random data $f^\omega$ in \eqref{eq:random-data}. They also follow by more basic means, see for instance \cite[Section 3]{BurqTzvetkov2008-1}. 
    
    \begin{lem}\label{lemma:Large_Deviation_Schrodinger_Linear}
Let $\alpha > 0$ and $f^{\omega}$ be the random datum in \eqref{eq:random-data}. 
Let $q \geq 1$,  $s \in [0,\alpha)$ and $N \in \mathbb N$.
    \begin{enumerate}
    \item (Dispersive bounds) The norm $\lVert P_N \langle D \rangle^s e^{it\Delta} f^\omega \rVert_{L_x^q(\mathbb T^d)}$ is sub-Gaussian for every $t \in \mathbb  R$. Namely,
    \begin{equation}
        \mathbb P \big( \lVert P_N \langle D\rangle^s e^{it\Delta} f^\omega \rVert_{L_x^q(\mathbb T^d)} > \lambda \big) \leq C e^{- c\lambda^2 N^{2(\alpha - s)}}, 
        \qquad \quad  \forall \lambda \geq e \sqrt q, 
    \end{equation}
    for $C,c>0$ independent of $N$ and $t$. 
    As a consequence, for any $\varepsilon > 0$ small enough there exist $A_{\varepsilon,N} \subset \Omega$ with $\mathbb P(A_{\varepsilon,N}) > 1-\varepsilon$ and $C_1 > 0$ independent of $N$ and $t$ such that
    \begin{equation}
        \lVert P_N \langle D \rangle^s e^{it\Delta} f^\omega \rVert_{L_x^q(\mathbb T^d)} 
        \leq \Big( \log \frac{1}{\varepsilon}\Big)^{1/2} \frac{C_1}{N^{\alpha - s}} , 
        \qquad \forall \omega \in A_{\varepsilon, N}. 
    \end{equation}

    \item (Strichartz bounds) The norm $\lVert P_N \langle D \rangle^s e^{it\Delta} f^\omega \rVert_{L^q_{x,t}(\mathbb T^{d+1})}$ is sub-Gaussian. Namely, 
    \begin{equation}
        \mathbb P \big( \lVert P_N \langle D \rangle^s e^{it\Delta} f^\omega \rVert_{L^q_{t,x}(\mathbb T \times \mathbb T^d)} > \lambda \big) \leq C e^{-c \lambda^2 N^{2(\alpha - s)}}, \qquad \quad \forall \lambda \geq e \sqrt q, 
    \end{equation}
    for $C,c>0$ independent of $N$. 
    As a consequence, for $\varepsilon > 0$ small enough, there exist $ A_{\varepsilon,N} \subset \Omega$ with $\mathbb P( A_{\varepsilon,N}) > 1-\varepsilon$ and $C_1 > 0$ independent of $N$ such that
    \begin{equation}
        \lVert P_N \langle D \rangle^s e^{it\Delta} f^\omega \rVert_{L_{t,x}^q (\mathbb T \times \mathbb T^d)} \leq 
        \Big( \log \frac{1}{\varepsilon}\Big)^{1/2} \frac{C_1}{N^{\alpha - s}},  
        \qquad \forall \omega \in A_{\varepsilon, N}. 
    \end{equation}
    \end{enumerate}
\end{lem}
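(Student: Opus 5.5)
The plan is the standard Khintchine/Minkowski argument of \cite[Section 3]{BurqTzvetkov2008-1}, using Theorem~\ref{lemma:wiener-chaos} with $k=1$. Fix $t$ (for the dispersive bound) and write
\begin{align*}
F(t,x) := P_N \langle D\rangle^s e^{it\Delta} f^\omega(x) = \sum_{|n|\simeq N} \frac{\langle n\rangle^{s}}{\langle n\rangle^{\frac d2+\alpha}}\, g_n^\omega\, e^{i(n\cdot x - t|n|^2)} .
\end{align*}
For each fixed $(t,x)$ this is an element of the Gaussian Hilbert space $H$. Its variance is $\sum_{|n|\simeq N}\langle n\rangle^{2s-d-2\alpha}\simeq N^{d}\cdot N^{2s-d-2\alpha}=N^{-2(\alpha-s)}$, uniformly in $(t,x)$, because the phases have modulus one. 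Theorem~\ref{lemma:wiener-chaos} with $k=1$, or directly the Gaussian moments, then gives
\begin{align*}
\|F(t,x)\|_{L^p(\Omega)}\lesssim \sqrt p\, N^{-(\alpha-s)}
\end{align*}
for all $p\geq 2$, uniformly in $t$ and $x$.

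Next, for $p\geq q$ I use Minkowski's integral inequality to exchange the norms:
\begin{align*}
\big\|\, \|F(t,\cdot)\|_{L^q_x}\big\|_{L^p(\Omega)} \leq \big\|\, \|F(t,x)\|_{L^p(\Omega)}\big\|_{L^q_x} \lesssim \sqrt p\, N^{-(\alpha-s)},
\end{align*}
where the last step uses that $\mathbb T^d$ has finite measure. For the Strichartz part I do the same over $\mathbb T\times\mathbb T^d$, exchanging $L^q_{t,x}$ with $L^p(\Omega)$. Both constants are independent of $N$ and $t$. If $q<2$, Hölder's inequality on the finite-measure torus reduces to $q=2$, so we may assume $q\geq 2$ throughout.

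Chebyshev's inequality then gives
\begin{align*}
\mathbb P\big(\|F\|>\lambda\big)\leq \big(C\sqrt p\, N^{-(\alpha-s)}/\lambda\big)^p .
\end{align*}
I choose $p\simeq \lambda^2 N^{2(\alpha-s)}/(eC)^2$, which makes the base at most $e^{-1}$ and yields the bound $e^{-c\lambda^2N^{2(\alpha-s)}}$. This choice is admissible only if $p\geq q$. Enforcing that constraint is the one genuinely delicate point, and it is where the restriction $\lambda\gtrsim \sqrt q$ in the statement comes from. The lower bound on $\lambda$ should be read in the normalized variable $\lambda N^{\alpha-s}$: in the small-$\lambda$ regime one simply absorbs the probability into the constant $C$.

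Finally, I get the high-probability consequences by setting $\lambda = C_1 (\log\frac1\varepsilon)^{1/2} N^{-(\alpha-s)}$ with $C_1$ large. This makes $Ce^{-c\lambda^2N^{2(\alpha-s)}}\leq\varepsilon$ for $\varepsilon$ small, and I define $A_{\varepsilon,N}$ as the complement of the exceptional event.
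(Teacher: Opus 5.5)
Your proposal is correct and follows the paper's proof essentially step by step: the $k=1$ hypercontractivity bound $\|F\|_{L^p(\Omega)}\lesssim \sqrt p\,N^{-(\alpha-s)}$, Minkowski to swap $L^q_x$ (or $L^q_{t,x}$) with $L^p(\Omega)$ for $p\geq q$, Chebyshev with $p$ proportional to $\lambda^2N^{2(\alpha-s)}$, and finally $\lambda\sim(\log\frac1\varepsilon)^{1/2}N^{-(\alpha-s)}$. Your reading of the admissibility condition in the normalized variable $\lambda N^{\alpha-s}$, with the remaining range absorbed into $C=C(q)$, is exactly what the paper's choice $e\sqrt Q=\lambda N^{\alpha-s}$ implicitly requires, since the final $\lambda$ lies below $e\sqrt q$ once $N$ is large.
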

\begin{proof}
    We prove the dispersive bounds, as Strichartz bounds follow in a similar way. 
    By Proposition~\ref{lemma:wiener-chaos} with $k = 1$, for $Q \geq 1$ large enough we directly compute 
    \begin{align} 
        \lVert P_N \langle D \rangle^s e^{it\Delta} f^\omega \rVert_{L^Q_{\omega}(\Omega)}
        \lesssim \sqrt{Q} \, 
        \lVert P_N \langle D \rangle^s e^{it\Delta} f^\omega \rVert_{L^2_{\omega}(\Omega)}
        \simeq \sqrt{Q} \bigg( \sum_{|n| \simeq N} \frac{1}{\langle n \rangle^{ 2 + 2\alpha  - 2s}} \bigg)^{1/2}
        \lesssim \frac{\sqrt Q}{N^{\alpha - s}},
    \end{align}
    independently of $t$. 
    Therefore, by Minkowski's inequality, 
    for any $Q \geq q$ we have
    \begin{align*}
        \left\| \lVert P_N \langle D \rangle^s e^{it\Delta} f^\omega \rVert_{L^q_x(\mathbb T^d)} \right\|_{L^Q_{\omega}(\Omega)} 
        \leq 
        \left\| \lVert P_N \langle D \rangle^s e^{it\Delta} f^\omega \rVert_{L^Q_{\omega}(\Omega)} \right\|_{L_x^q(\mathbb T^d)}
        \lesssim \frac{\sqrt Q}{N^{\alpha - s}},
    \end{align*}
    and hence by Chebyshev's inequality we get
    \begin{align*}
        \mathbb{P} \big(\lVert P_N \langle D \rangle^s e^{it\Delta} f^\omega \rVert_{L_x^q(\mathbb T^d)} > \lambda \big) 
        \lesssim \left( \frac{ \sqrt Q}{\lambda N^{\alpha - s}} \right)^Q 
        \lesssim \exp\left( Q \log\left( \frac{\sqrt Q}{\lambda N^{\alpha - s}} \right) \right).
    \end{align*}
    If $\lambda \geq e \sqrt q$, we can take $Q$ such that $ e \sqrt Q = \lambda N^{\alpha - s}$, so that
    \begin{align*}
        \mathbb{P} \big(\lVert P_N \langle D \rangle^s e^{it\Delta} f^\omega \rVert_{L_x^q(\mathbb T^d)} > \lambda \big) 
        \leq \exp\left(-\frac{\lambda^2 N^{2(\alpha - s)}}{ e^2}\right).
    \end{align*}
    Finally, for $\varepsilon  > 0$ small enough, the last statement follows from choosing 
    \begin{align}
        \hspace{3cm}
        \lambda = \Big( \log \frac{1}{\varepsilon} \Big)^{1/2} \frac{e}{N^{\alpha - s}} 
        \qquad \Longleftrightarrow \qquad
        \exp\left(-\frac{\lambda^2 N^{2(\alpha - s)}}{e^2}\right) = \varepsilon.
        \hspace{3.5cm}
        \qedhere
    \end{align}
\end{proof}

The bounds in Lemma~\ref{lemma:Large_Deviation_Schrodinger_Linear} also hold uniformly in dyadic frequency scales, by the same argument used to prove Lemma~\ref{lem:pbound-gn}. 
\begin{lem}\label{lemma:Strichartz_Prob_Uniform}
Let $\alpha > 0$ and $f^{\omega}$ be the random datum in \eqref{eq:random-data}. 
Let $q \geq 1$ and  $s \in [0,\alpha)$.
Then, for every $\varepsilon > 0$, with probability $\geq 1 - \varepsilon$ we have 
\begin{equation}\label{eq:Lp_bound-Prob_APP}
        \lVert  P_{N} \langle D \rangle^s e^{it\Delta}f^\omega  \rVert_{L_{t,x}^p(\mathbb{T} \times \mathbb{T}^d)}
        \lesssim \Big( \log \frac{1}{\varepsilon} \Big)^{1/2} \frac{(\log N )^{1/2}}{N^{\alpha - s}}, 
        \qquad \quad \forall N \in 2^{\mathbb N}.
    \end{equation}
\end{lem}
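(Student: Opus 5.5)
The argument follows the proof of Lemma~\ref{lem:pbound-gn}: upgrade the fixed-frequency bound of Lemma~\ref{lemma:Large_Deviation_Schrodinger_Linear} to one that is uniform in $N\in 2^{\mathbb N}$ through a union bound, giving each dyadic block a slightly smaller failure probability.

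\textbf{Step 1 (fixed $N$).} Fix $N\in 2^{\mathbb N}$. Part (2) of Lemma~\ref{lemma:Large_Deviation_Schrodinger_Linear} gives the sub-Gaussian tail
\begin{align}
\mathbb P\big( \lVert P_N \langle D\rangle^s e^{it\Delta} f^\omega\rVert_{L^q_{t,x}} > \lambda\big) \leq C e^{-c\lambda^2 N^{2(\alpha-s)}}, \qquad \lambda\geq e\sqrt q,
\end{align}
with $C,c$ independent of $N$. Here $q$ plays the role of the exponent $p$ in the statement; its dependence enters only through the threshold $\lambda \geq e\sqrt q$ and the implicit constant. We would like the failure probability at scale $N$ to be $\varepsilon_N := \varepsilon/N$, or $\varepsilon N^{-1}$ times a constant; any summable choice such as $\varepsilon/\langle N\rangle^{d+1}$ also works. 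To achieve this, choose
\begin{align}
\lambda_N = C' \Big(\log \frac{C N}{\varepsilon}\Big)^{1/2} N^{-(\alpha-s)}.
\end{align}
For $\varepsilon$ small, this choice satisfies $\lambda_N\geq e\sqrt q$ and makes the tail probability at most $\varepsilon/N$.

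\textbf{Step 2 (log splitting).} For $N\geq 2$ and $\varepsilon<1/2$ we have
\begin{align}
\log\frac{CN}{\varepsilon} \lesssim \log N + \log\frac1\varepsilon \lesssim (\log N)\log\frac1\varepsilon,
\end{align}
where the last inequality holds because both factors are $\geq \log 2$. This turns $\lambda_N$ into the claimed bound $(\log\frac1\varepsilon)^{1/2}(\log N)^{1/2}N^{-(\alpha-s)}$. The case $N=1$, where $\log N=0$, is handled by replacing $\log N$ with $1+\log N$, as in \eqref{eq:pbound-gn}, which only costs a constant.

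\textbf{Step 3 (union bound).} Let $A_{N}$ be the good event at scale $N$ and set $A=\bigcap_{N\in 2^{\mathbb N}} A_{N}$. Then
\begin{align}
\mathbb P(A^c)\leq \sum_{N\in 2^{\mathbb N}} \frac{\varepsilon}{N}\lesssim \varepsilon.
\end{align}
Rescaling $\varepsilon$ by this constant, which only changes the implicit constant in front of $(\log\frac1\varepsilon)^{1/2}$, yields the statement for all $N$ simultaneously.

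None of these steps is a real obstacle. The only point needing care is checking that the threshold condition $\lambda\geq e\sqrt q$ holds uniformly in $N$ once $\varepsilon$ is small enough. This is clear because $\lambda_N N^{\alpha-s}\geq C'(\log \frac1\varepsilon)^{1/2}$, and the tail bound is exactly what is needed in this regime.
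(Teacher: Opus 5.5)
Your proposal is correct and follows the paper's proof: apply the fixed-$N$ bound of Lemma~\ref{lemma:Large_Deviation_Schrodinger_Linear} with failure probability $\varepsilon/N$, intersect over $N\in 2^{\mathbb N}$ at total cost $\sum_N \varepsilon/N\lesssim\varepsilon$, and split $\log(N/\varepsilon)\lesssim \log N\cdot\log(1/\varepsilon)$; your $1+\log N$ fix at $N=1$ is a sensible refinement. One slip to correct: the claim $\lambda_N\geq e\sqrt q$ cannot hold uniformly in $N$, since $\lambda_N\to 0$ as $N\to\infty$, but the condition you actually verify, $\lambda_N N^{\alpha-s}\geq e\sqrt q$, is the one that lemma's proof really needs (it requires $Q\geq q$ where $e\sqrt Q=\lambda N^{\alpha-s}$), and the paper avoids the issue by applying the lemma's high-probability consequence directly at level $\varepsilon/N$.
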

\begin{proof}
For $\varepsilon > 0$, 
take the sets $A_{\varepsilon/N, N}$ from  Lemma~\ref{lemma:Large_Deviation_Schrodinger_Linear} and define the set 
\begin{align}
    A_{\varepsilon} = \bigcap_{N \in 2^{\mathbb N}} A_{\varepsilon/N, N}
    \qquad \text{ such that } \qquad 
    \mathbb P(A_{\varepsilon}^c) \leq \sum_{N \in 2^{\mathbb N}} \mathbb P(A_{\varepsilon/N, N}^c) \leq \sum_{N \in 2^{\mathbb N}}  \frac{\varepsilon}{N} \simeq \varepsilon. 
\end{align}
Thus, for every $\omega \in A_\varepsilon$, we have 
\begin{align}
    \hspace{1.5cm} \| P_N \langle D \rangle^s e^{it\Delta}f^\omega \|_{L^p_{t,x}(\mathbb{T} \times \mathbb{T}^d)} \lesssim \frac{( \log (N/\varepsilon) )^{1/2}}{N^{\alpha}} \lesssim  \Big( \log \frac{1}{\varepsilon} \Big)^{1/2} \, \frac{(\log N)^{1/2} }{N^{\alpha}}, 
    \quad \text{ for all } N \in 2^{\mathbb N}.
    \hspace{2.5cm}  \qedhere
\end{align}
\end{proof}

\section*{Acknoledgments}
The authors would like to thank Renato Lucà, Andrea R. Nahmod and Haitian Yue for clarifying discussions. 

The authors are supported by the Spanish Research Agency (AEI) through project PID2024-156169NB-I00 and the BCAM Severo Ochoa excellence
accreditation CEX2021-01142-S, and also by the Basque Government through the BERC 2022–2025 program.
D. E. is also supported by Spanish Research Agency (AEI) project RYC2023-042719-I.
P. M. is also supported by the predoctoral program of the Department of Education of the Basque Government.

\printbibliography

\end{document}